\colorlet{lgray}{white!85!black}
\colorlet{lred}{white!75!red}
\newcommand{\bra}[1]{\left\langle #1\right|}
\newcommand{\ket}[1]{\left|#1\right\rangle}
\def\b{\bar}
\newtheorem{theorem}{Theorem} 
\newtheorem*{theorem*}{Theorem}
\newtheorem{lemma}[theorem]{Lemma}
\newtheorem{definition}[theorem]{Definition}
\newtheorem{proposition}[theorem]{Proposition}
\newtheorem{corollary}[theorem]{Corollary}
\theoremstyle{remark}
\newtheorem{remark}[theorem]{Remark}
\numberwithin{equation}{section} \numberwithin{theorem}{section}
\newcommand{\la}{\lambda}
\newcommand{\ii}{{\mathbf i}}
\renewcommand{\c}{\mathbf c}
\newcommand{\eps}{\varepsilon}
\newcommand{\all}{\mathbf{+}}
\newcommand{\nul}{\zugzwang}
\newcommand{\rr}{\mathbf{--}}
\newcommand{\uu}{\mathbf{|}}
\newcommand{\ur}{\mathbf{ \lefthalfcap }}
\newcommand{\ru}{\mathbf{ \righthalfcup }}
\def\ie{{\it i.e.}\/\ }
\def\cf{{\it cf.}\/\ }
\def\sq{sqrt(3)}
\newcounter{x}
\newcounter{y}
\newcounter{z}
\newcommand\xaxis{210}
\newcommand\yaxis{-30}
\newcommand\zaxis{90}
\newcommand\topside[3]{
  \fill[fill=black!40!white, draw=black,shift={(\xaxis:#1)},shift={(\yaxis:#2)},
  shift={(\zaxis:#3)}] (0,0) -- (30:1) -- (0,1) --(150:1)--(0,0);
}
\newcommand\leftside[3]{
  \fill[fill=black!10!white, draw=black,shift={(\xaxis:#1)},shift={(\yaxis:#2)},
  shift={(\zaxis:#3)}] (0,0) -- (0,-1) -- (210:1) --(150:1)--(0,0);
}
\newcommand\rightside[3]{
  \fill[fill=black!25!white, draw=black,shift={(\xaxis:#1)},shift={(\yaxis:#2)},
  shift={(\zaxis:#3)}] (0,0) -- (30:1) -- (-30:1) --(0,-1)--(0,0);
}
\newcommand\cube[3]{
  \topside{#1}{#2}{#3} \leftside{#1}{#2}{#3} \rightside{#1}{#2}{#3}
}
\newcommand\planepartition[1]{
 \setcounter{x}{-1}
  \foreach \a in {#1} {
    \addtocounter{x}{1}
    \setcounter{y}{-1}
    \foreach \b in \a {
      \addtocounter{y}{1}
      \setcounter{z}{-1}
      \foreach \c in {0,...,\b} {
        \addtocounter{z}{1}
      \ifthenelse{\c=0}{\setcounter{z}{-1},\addtocounter{y}{0}}{
        \cube{\value{x}}{\value{y}}{\value{z}}}
      }
    }
  }
}
\title[Between the stochastic six vertex model and Hall--Littlewood processes]
{Between the stochastic six vertex model and Hall--Littlewood processes}
\author{Alexei Borodin}
\address[Alexei Borodin]{ Department of Mathematics, MIT, Cambridge, USA, and
Institute for Information Transmission Problems, Moscow, Russia. E-mail: borodin@math.mit.edu }
\author{Alexey Bufetov}
\address[Alexey Bufetov]{Department of Mathematics, MIT, Cambridge, USA. E-mail: alexey.bufetov@gmail.com}
\author{Michael Wheeler}
\address[Michael Wheeler]{ School of Mathematics and Statistics, University of Melbourne, Parkville,
Victoria 3010, Australia. E-mail: wheelerm@unimelb.edu.au}
\begin{document}

\maketitle

\begin{abstract}
We prove that the joint distribution of the values of the height function for the stochastic six vertex model in a quadrant along a down-right path coincides with that for the lengths of the first columns of partitions distributed according to certain Hall--Littlewood processes. 

In the limit when one of the quadrant axes becomes continuous, we also show that the two-dimensional random field of the height function values has the same distribution as the lengths of the first columns of partitions from certain ascending Hall--Littlewood processes evolving under a Robinson--Schensted--Knuth type Markovian evolution. 

\end{abstract}

\setcounter{tocdepth}{1}
\tableofcontents

\section{Introduction}

The last two decades have seen a sharp increase in the number of \emph{integrable}, or exactly solvable probabilistic systems. Among others, two fairly general algebraic mechanisms of producing such systems were suggested --- the Macdonald processes \cite{BC}, \cite{BP-lectures}, and the higher spin stochastic six vertex models \cite{CP}, \cite{BP1}, \cite{BP-hom}.

While these two mechanisms seemed rather different at first, the distance between them appears to be shrinking. One direct connection was noticed in \cite{B-6v}, where a distributional equality between the height function at a vertex for a stochastic six vertex in a quadrant, and the (shifted) length of the first column for the Young diagram distributed according to a Hall--Littlewood measure\footnote{The Hall--Littlewood measures are probability distributions on partitions that can be viewed as marginals of the Hall--Littlewood processes -- a specialization of the Macdonald processes at $q=0$, where $(q,t)$ are the parameters of the underlying Macdonald polynomials.} was established. The goal of the present paper is to extend this equality to multiple observation points. Another intriguing connection between the stochastic vertex models and $q$-Whittaker ({\it i.e.}, $t=0$ Macdonald) processes was developed in \cite{OP} very recently; at the moment this does not seem to be immediately related to what we do below. 

The one-point observation of \cite{B-6v} was based on explicit integral representations of certain exponential moments of both distributions that had been obtained in previous works; those turned out to be apparently the same, once the parameters of the models were matched appropriately. We first extend this comparison to the joint distributions of the height function for the stochastic six vertex model taken at vertices that are located along a horizontal or a vertical line. The corresponding exponential moments of the height function were computed in \cite{BP1}, and they match those for the (shifted) first columns of partitions coming from \emph{ascending} Hall--Littlewood processes (a subclass of Hall--Littlewood processes supported by ascending chains of partitions). This match is sufficient to claim the equality of multi-point distributions, and we give the details in Section \ref{sc:moments-match}. 

Unfortunately, such a straightforward approach fails if one wants to consider the values of the six vertex height function at vertices that do not lie on a straight line, as no explicit formulas describing joint distributions of such values are known. 

Nevertheless, we are able to prove a more general distributional match between the six vertex height function along an arbitrary down-right lattice path in the quadrant on one side, and the first columns of partitions distributed according to certain general (not necessarily ascending) Hall--Littlewood processes. To do that we employ a different approach based on an infinite volume limit of the algebraic Bethe ansatz for the quantum affine $sl_2$ and its limiting algebra of $t$-bosons, as (independently) developed in \cite{B} and \cite{WZ}, see also \cite{BP1}, \cite{BP-hom}. 

This establishes a link between many marginals of the stochastic six vertex in the quadrant and Hall--Littlewood processes, but leaves the question of the Hall--Littlewood interpretation of the whole quadrant open. 

In the last section of the present work, we address this question in a degenerate case of the stochastic six vertex model when one of the boundary axes of the quadrant becomes continuous. If one looks at the vector of height function values of the stochastic six vertex model along a vertical section of the quadrant (it suffices to take finitely many bottom-most values), then, as the section moves to the right, this vector evolves in a Markovian way. The continuous axis limit corresponds to the situation when this Markov chain evolves in continuous time, and thus makes at most one elementary jump at any time moment.

As follows from our previous results, at each time moment, the distribution of the vector is that of first columns for an ascending Hall--Littlewood process. We show that the so-called RSK (Robinson--Schensted--Knuth) Markovian evolution of the ascending Hall--Littlewood processes, as constructed in \cite{BufP}, \cite{BP2}, projects to a Markov chain on the first columns, and this projection coincides with the Markov chain originating from the stochastic six vertex model. 

Thus, in the limit of one continuous axis, we show that the whole random field of the values of the height function for the stochastic six vertex in the quadrant can be seen as the first column marginal for the RSK dynamics on the ascending Hall--Littlewood processes. This, in particular, also implies the result about the height function along down-right paths, but only in the continuous axis degeneration. 

An obvious question is whether this claim can be extended to the original two-dimensional lattice, which should correspond to discrete time RSK dynamics. The answer to this question appears to be positive, and the needed dynamics will be described in an upcoming work \cite{BM}. 

Another obvious question is whether the extension of the stochastic six vertex obtained by looking at further columns of the corresponding Hall--Littlewood-distributed partitions can be given an independent interpretation in terms of some solutions of the Yang--Baxter equation. This remains unclear to us at the moment. 

\subsection{Acknowledgments} A.~B. was partially supported by the NSF grant DMS-1607901 and by  Fellowships of the Radcliffe Institute for Advanced Study and the Simons Foundation. M.~W. is supported by the Australian Research Council grant DE160100958.

\section{Hall--Littlewood processes}

\subsection{Definitions related to partitions}
\label{sec:partitions}

A \textit{partition} $\la$ is a finite non-increasing sequence of positive integers $\la_1 \ge \la_2 \ge \cdots$. The \textit{length} of a partition $\la$ is the number of positive integers $\la_i$ that constitute it, and equal to $\la'_1$, the first part of the \textit{conjugate partition} $\la'$; more generally, $\la'_j = \#\{i:\la_i \geq j\}$. For two partitions $\la, \mu$ we write $\la \subset \mu$ if the inequalities $\la_1 \le \mu_1$, $\la_2 \le \mu_2, \dots $ hold. Similarly, we write $\lambda \prec \mu$ and say that $\la$ and $\mu$ interlace if the string of inequalities $\mu_1 \ge \la_1 \ge \mu_2 \ge \la_2 \geq \mu_3 \geq \cdots$ hold.

Let $\la$ be a partition represented by its Young diagram, \ie by left-justified rows of boxes of length $\la_i$, for $1 \leq i \leq \la'_1$. The {\it frame} of $\la$ is the union of all eastward and southward facing edges of the Young diagram. For example, for $\la = (6,3,3,1)$, we have the Young diagram
\begin{align*}
\begin{tikzpicture}[scale=0.6,>=stealth]
\draw (0,4) -- (6,4);
\draw (0,3) -- (6,3);
\draw (0,2) -- (3,2);
\draw (0,1) -- (3,1);
\draw (0,0) -- (1,0);
\draw (0,0) -- (0,4);
\draw (1,0) -- (1,4);
\draw (2,1) -- (2,4);
\draw (3,1) -- (3,4);
\draw (4,3) -- (4,4);
\draw (5,3) -- (5,4);
\draw (6,3) -- (6,4);
\draw[ultra thick,->] (0,0) -- (1,0);
\draw[ultra thick,gray,->] (1,0) -- (1,1);
\draw[ultra thick,->] (1,1) -- (2,1);
\draw[ultra thick,->] (2,1) -- (3,1);
\draw[ultra thick,gray,->] (3,1) -- (3,2);
\draw[ultra thick,gray,->] (3,2) -- (3,3);
\draw[ultra thick,->] (3,3) -- (4,3);
\draw[ultra thick,->] (4,3) -- (5,3);
\draw[ultra thick,->] (5,3) -- (6,3);
\draw[ultra thick,gray,->] (6,3) -- (6,4);
\end{tikzpicture}
\end{align*}
where we have marked the frame of $\lambda$ in black and grey steps.

Given a binary string $S$ of $p$ pluses and $m$ minuses, we can associate to it a partition $\lambda(S)$: reading the signs in $S$, we trace a path consisting of $p$ up steps and $m$ right steps, where $+$ corresponds with an up step and $-$ with a right step. Embedding this path in a $p \times m$ box, it then frames the Young diagram of the partition $\lambda(S)$. In the example above, the corresponding binary string is $S = (-,+,-,-,+,+,-,-,-,+)$.

\subsection{Ascending Hall--Littlewood process}
\label{sec:HL-process-usual}

In this section we briefly recall the definition of a particular case of the Hall--Littlewood process and compute expectations of its observables. We refer to \cite[Chapter 3]{M} for the definition and properties of the Hall--Littlewood symmetric functions that are used in this construction, and to \cite[Section 2]{BC} for a more general definition of the Macdonald processes.


Let $P_{\la}$, $Q_{\la}$ be the Hall--Littlewood symmetric functions that depend on a real parameter $t$, $0<t<1$. For a positive integer $M$ let $\rho$ be a Macdonald-positive specialization of the algebra of symmetric functions (see the definition in, {\it e.g.}, \cite[Section 2]{BC}), and let $\{ a_i \}_{i=1}^{\infty}$ be positive reals.

For $M$ a positive integer, let $\la^{(1)} \subset \la^{(2)} \subset \dots \subset \la^{(M)}$ be a sequence of partitions. We define a weight of this sequence as
\begin{multline}
W_{\{ a_i \},\rho} ( \la^{(1)} \subset \la^{(2)} \subset \dots \subset \la^{(M)} ) :=
\\
P_{\la^{(1)}} (a_1) P_{\la^{(2)} / \la^{(1)}} (a_2) \dots P_{\la^{(M)} / \la^{(M-1)}} (a_M) Q_{\la^{(M)}} (\rho).
\end{multline}
Next, it is known (see \cite[Section 2]{BC}) that under certain conditions on $\{ a_i \}_{i\le M}$ and $\rho$ one can define a probability measure on such sequences of partitions by
\begin{equation}
\label{eq:HL-proc-def-1/5}
\mathrm{Prob}_{ \{ a_i \},\rho} (\la^{(1)} \subset \la^{(2)} \subset \dots \subset \la^{(M)}) = \frac{W_{\{ a_i \},\rho} ( \la^{(1)} \subset \la^{(2)} \subset \dots \subset \la^{(M)} )}{ C(a_1,\dots, a_M;\rho)},
\end{equation}
where $C(a_1,\dots, a_M;\rho)$ is a normalization constant; its explicit form will not be important for us. Following the terminology of \cite{BC}, the process described by the measure \eqref{eq:HL-proc-def-1/5} can be called ``ascending'', since the sequence $\la^{(1)} \subset \la^{(2)} \subset \dots \subset \la^{(M)}$ is non-decreasing.

We will need two specific choices of a specialization $\rho$. Let $\{b_j \}_{j=1}^{\infty}$ be real numbers which satisfy
\begin{equation}
\label{eq:cond-HL}
a_i >0, \qquad b_j >0, \qquad a_i b_j <1, \qquad \mbox{for all $i$ and $j$.}
\end{equation}
For a positive integer $N$ let us define the specialization $\rho_N^-$ by setting the variables $x_1,\dots, x_N$ (from the algebra of symmetric functions) to be equal to $b_1, \dots, b_N$, respectively. In this case, \eqref{eq:HL-proc-def-1/5} reads
\begin{multline}
\label{eq:HL-proc-def-1}
\mathrm{Prob}_{M,N} (\la^{(1)} \subset \la^{(2)} \subset \dots \subset \la^{(M)}) \\ :=  \frac{P_{\la^{(1)}} (a_1) P_{\la^{(2)} / \la^{(1)}} (a_2) \dots P_{\la^{(M)} / \la^{(M-1)}} (a_M) Q_{\la^{(M)}} (b_1, \dots, b_N)}{\prod_{j=1}^N \prod_{i=1}^M \frac{1 - t a_i b_j}{1 - a_i b_j}}.
\end{multline}

We will also need another ascending Hall--Littlewood process with $\rho$ being the Plancherel specialization with parameter $\tau$, for $\tau>0$ (see \cite[Definition 2.2.3]{BC} for a definition of the Plancherel specializations).  We will use this process in Section \ref{sec:RSKdyn}.


The rest of the section is devoted to random partitions $\la^{(1)} \subset \la^{(2)} \subset \dots \subset \la^{(M)}$ which are distributed according to \eqref{eq:HL-proc-def-1}. For each $1 \le m \le M$, the marginal distribution of the partition $\la^{(m)}$ is the Hall--Littlewood measure with parameters $a_1, \dots, a_m$ and $b_1, \dots, b_N$ (which means that the probability of the event $\la^{(m)} = \la$ is proportional to $P_{\la} (a_1, \dots, a_m) Q_{\la} (b_1, \dots, b_N)$), \cf \cite[Section 2.2]{BC}.

We denote by $\lambda'_1 (m,N) \equiv \lambda'_1(m)$ the length of the first column of the random partition $\la^{(m)}$. The machinery of Macdonald processes allows to compute certain functionals of such random variables. We will need the following technical definition.


%

\begin{definition}
\label{def:contours-HL}
For positive integers $N,k$, and $m_1 \ge m_2 \ge \dots \ge m_k \ge 1$, a collection of curves $\gamma_1$, $\dots$, $\gamma_k$ in the complex plane is called \textit{suitable for} $(t, \{a_i \}_{i=1}^{m_1}, \{b_j \}_{j=1}^N)$, if the following conditions hold:
\begin{enumerate}[{\bf 1.}]

\item For any $1 \le r \le k$ the curve $\gamma_r$ is a union of finitely many simple positively oriented smooth closed curves.

\item For any $1 \le r \le k$ the curve $\gamma_r$ encircles $0, a_1, \dots, a_{m_r}$, and does not encircle any of the points $\{ 1/(t b_j) \}_{j=1}^N$.

\item For any $1 \le r <s \le k$ the interior of the curve $\gamma_s$ contains no points from the curve $t \gamma_r$ (which is the image of $\gamma_r$ under $z \mapsto t z$).

\end{enumerate}

\end{definition}

An example of an arrangement of such parameters and contours is shown in Figure \ref{fig:contoursHLand6vert} (left panel).
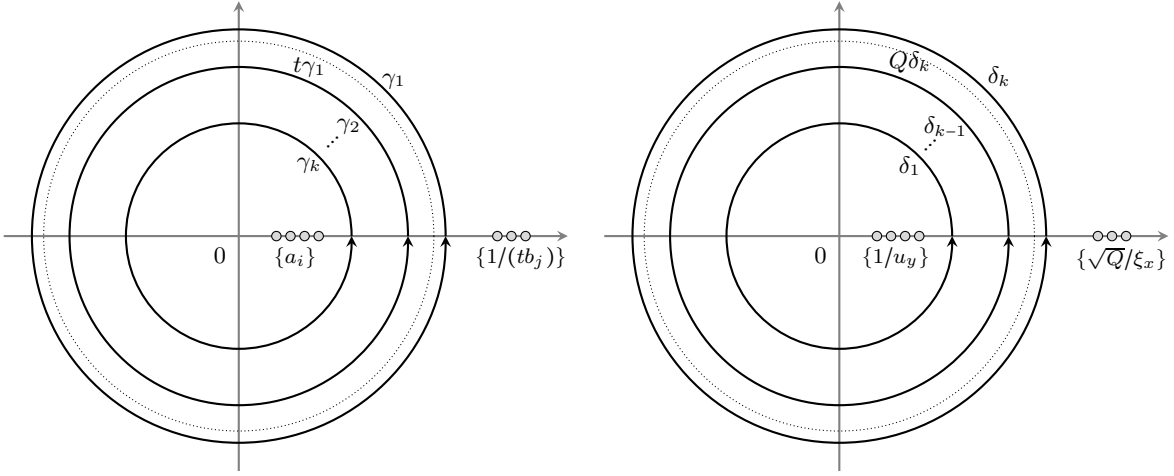
\begin{figure}
\begin{tabular}{cc}
\begin{tikzpicture}[xscale=1.25,yscale=1.25,>=stealth]
\draw[gray,thick,->] (-2.5,0) -- (3.5,0);
\draw[gray,thick,->] (0,-2.5) -- (0,2.5);
\node[below,text centered] at (0.6,0) {\tiny$\{a_i\}$};
\node[below,text centered] at (3,0) {\tiny$\{1/(t b_j)\}$};
\foreach\x in {0.45,0.6,0.75,0.9,2.8,2.95,3.1}{
\filldraw[fill=lgray] (\x,0) arc (0:-360:0.05);
}
\draw[thick,<-] (1.2,0) arc (0:-360:1.2);
\draw[thick,<-] (1.8,0) arc (0:-360:1.8);
\draw[thick,<-] (2.2,0) arc (0:-360:2.2);
\draw[densely dotted] (2.075,0) arc (0:-360:2.075);
\node at (-0.2,-0.2) {\scriptsize $0$};
\node at (1.65,1.65) {\scriptsize$\gamma_1$};
\node at (0.75,1.8) {\scriptsize$t\gamma_1$};
\node at (1.17,1.17) {\scriptsize$\gamma_2$};
\node[rotate=-45] at (1,1) {\tiny$\vdots$};
\node at (0.75,0.75) {\scriptsize$\gamma_k$};
\end{tikzpicture}
&
\begin{tikzpicture}[xscale=1.25,yscale=1.25,>=stealth]
\draw[gray,thick,->] (-2.5,0) -- (3.5,0);
\draw[gray,thick,->] (0,-2.5) -- (0,2.5);
\node[below,text centered] at (0.6,0) {\tiny$\{1/u_y\}$};
\node[below,text centered] at (3,0) {\tiny$\{\sqrt{Q}/\xi_x\}$};
\foreach\x in {0.45,0.6,0.75,0.9,2.8,2.95,3.1}{
\filldraw[fill=lgray] (\x,0) arc (0:-360:0.05);
}
\draw[thick,<-] (1.2,0) arc (0:-360:1.2);
\draw[thick,<-] (1.8,0) arc (0:-360:1.8);
\draw[thick,<-] (2.2,0) arc (0:-360:2.2);
\draw[densely dotted] (2.075,0) arc (0:-360:2.075);
\node at (-0.2,-0.2) {\scriptsize $0$};
\node at (1.7,1.7) {\scriptsize$\delta_k$};
\node at (0.75,1.85) {\scriptsize$Q\delta_k$};
\node at (1.13,1.17) {\scriptsize$\delta_{k-1}$};
\node[rotate=-45] at (0.97,0.97) {\tiny$\vdots$};
\node at (0.75,0.75) {\scriptsize$\delta_1$};
\end{tikzpicture}
\end{tabular}
\caption{ A possible arrangement of parameters and contours for Definitions \ref{def:contours-HL} (left panel) and \ref{def:VERTmom} (right panel). With matching \eqref{eq:param-match} and a change of variables $z=1/w$, these contours correspond to each other.}
\label{fig:contoursHLand6vert}
\end{figure}


\begin{proposition}
\label{prop:HL-mom}

Let $N,\,k$, and $m_1 \ge m_2 \ge \dots \ge m_k$ be positive integers. Let $\lambda'_1 (m_1,N),\dots,\lambda'_1 (m_k,N)$ be random variables constructed as above with the use of the ascending Hall--Littlewood process $\mathrm{Prob}_{m_1,N}$.
Assume that contours $\gamma_1, \dots, \gamma_k$ are suitable for $(t, \{a_i \}_{i=1}^{m_1}, \{b_j \}_{j=1}^N)$. We have

\begin{multline}
\label{eq:HLmom}
\mathbf E \left( t^{m_1 - \lambda'_1 (m_1) } t^{m_2 - \lambda'_1 (m_2) } \dots t^{m_k - \lambda'_1 (m_k) } \right)  \\ = \frac{t^{k(k-1)/2}}{(2 \pi \ii)^{k} } \oint_{\gamma_1} \frac{d z_1}{z_1} \oint_{\gamma_2} \frac{d z_2}{z_2} \dots \oint_{\gamma_k} \frac{d z_k}{z_k} \prod_{1 \le i<j \le k} \frac{z_i - z_j}{t z_i - z_j} \prod_{l=1}^k \prod_{j=1}^N \frac{1 - z_l b_j}{1 - t z_l b_j} \prod_{l=1}^k \prod_{i=1}^{m_l} \frac{t z_l - a_{i}}{z_l - a_{i}}.
\end{multline}
\end{proposition}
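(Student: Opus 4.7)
I would derive \eqref{eq:HLmom} as the $q=0$ specialization of the general Macdonald-operator observable formulas from \cite[Sec.~2.2]{BC}, adapted to the ascending structure \eqref{eq:HL-proc-def-1}. The starting point is the single-point case ($k=1$): for the Hall--Littlewood measure with parameters $(a_1,\dots,a_m;b_1,\dots,b_N)$,
\begin{equation*}
\mathbf E\bigl[t^{m-\lambda'_1(m)}\bigr]=\frac{1}{2\pi\ii}\oint_\gamma\frac{dz}{z}\prod_{i=1}^m\frac{tz-a_i}{z-a_i}\prod_{j=1}^N\frac{1-zb_j}{1-tzb_j},
\end{equation*}
which is a standard one-variable Macdonald-operator identity. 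It can be verified either by a direct residue expansion linking the integrand to the Pieri coefficients for adjoining a single column, or by recognizing the integrand as the $q\to 0$ eigenvalue of the first Macdonald difference operator acting on the Cauchy kernel $\prod_{i,j}(1-ta_ib_j)/(1-a_ib_j)$.

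For the $k$-point case, I would first collapse the ascending process to the joint law of the marked levels $(\la^{(m_1)},\dots,\la^{(m_k)})$ by iteratively summing out the unmarked partitions via the skew Cauchy identity
\begin{equation*}
\sum_{\mu\supset\la}P_{\mu/\la}(a)\,Q_\mu(b_1,\dots,b_N)=Q_\la(b_1,\dots,b_N)\prod_{j=1}^N\frac{1-tab_j}{1-ab_j}
\end{equation*}
and the $P$-branching rule $\sum_\nu P_{\mu/\nu}(a')P_{\nu/\la}(a'')=P_{\mu/\la}(a',a'')$. The resulting marked joint density has a bottom factor $P_{\la^{(m_k)}}(a_1,\dots,a_{m_k})$, skew factors $P_{\la^{(m_r)}/\la^{(m_{r+1})}}(a_{m_{r+1}+1},\dots,a_{m_r})$ between consecutive marked levels, a top factor $Q_{\la^{(m_1)}}(b_1,\dots,b_N)$, and normalization $\prod_{i=1}^{m_1}\prod_{j=1}^N(1-ta_ib_j)/(1-a_ib_j)$.

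I would then apply the single-point mechanism iteratively, starting from the outermost level $m_1$. For each $r=1,\dots,k$, introduce a contour variable $z_r$ and the multiplicative factor $\prod_{i=1}^{m_r}\frac{tz_r-a_i}{z_r-a_i}\prod_{j=1}^N\frac{1-z_rb_j}{1-tz_rb_j}$, whose contour integral extracts $t^{m_r-\la'_1(m_r)}$ from the inner expectation. Combining the $k$ insertions into a single iterated integral and rearranging the $a$- and $b$-products across the intervening skew factors yields the cross-ratio Jacobian $\prod_{1\le i<j\le k}(z_i-z_j)/(tz_i-z_j)$ together with the prefactor $t^{k(k-1)/2}$. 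This is the $q=0$ analogue of the commutation relation for Macdonald operators invoked in the multipoint arguments of \cite[Sec.~2.2]{BC}.

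The last check is that the contour prescription of Definition \ref{def:contours-HL} makes all the preceding manipulations valid: the requirement that $\gamma_s$ lie strictly inside $\gamma_r$ and avoid $t\gamma_r$ for $r<s$ keeps the spurious $z_s=z_r/t$ poles of the Jacobian outside the integration region, while the exclusion of $\{1/(tb_j)\}$ and the inclusion of $0,a_1,\dots,a_{m_r}$ select the intended $a_i$-residues. The main obstacle I expect is precisely the iterated-operator step: one must rigorously show that acting on the \emph{joint} Gibbs density (rather than on each single-level marginal separately) produces exactly the asserted product of scalar eigenvalues together with the Jacobian, with no leftover terms from the intermediate skew factors; this is where the nested contour conditions become essential.
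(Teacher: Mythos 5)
Your proposal follows essentially the paper's own route: the paper simply cites \cite[Proposition 2.2.14]{BC} (see also \cite[Proposition 3.4]{D}), whose proof is exactly the $q=0$ Macdonald-difference-operator machinery you sketch — single-operator insertion via the Cauchy kernel, iterated application at the nested variable sets $a_1,\dots,a_{m_r}$, with the contour conditions dictated by which residues are (and are not) picked up in that procedure. One small correction: for $r<s$ the relevant factor is $(z_r-z_s)/(tz_r-z_s)$, whose pole sits at $z_s=tz_r$ (not $z_s=z_r/t$), and it is precisely this pole that condition {\bf 3} of Definition \ref{def:contours-HL} keeps outside $\gamma_s$.
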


\begin{proof}
This is essentially Proposition 2.2.14 of \cite{BC}, see also \cite[Proposition 3.4]{D}. The conditions on contours appear from the residues which we need (and need not) to take into account during the procedure described in \cite[Proposition 2.2.14]{BC}.
\end{proof}

\subsection{More general Hall--Littlewood process}
\label{sec:general-HL}

We will also study a Hall--Littlewood process which allows for arbitrary (not necessarily ascending) sequences of partitions. For positive integers $M,N$, define the two sets
\begin{align*}
\mathcal{S}^{\pm}_{M,N}
=
\left\{
(S(1),\dots,S(M+N))
\Bigg|
\begin{array}{ll} S(1) = \pm 1, & S(i) \in \{-1,+1\} \ \forall\ 1 < i < M+N, \vspace{0.25cm} \\
S(M+N) = \mp 1, & \sum_{i=1}^{M+N} S(i) = M-N
\end{array}
\right\}.
\end{align*}
In words, $\mathcal{S}^{\pm}_{M,N}$ is the set of all binary strings of $M$ pluses and $N$ minuses, which begin with $\pm$ and end with $\mp$. Since we are only concerned with the signs in a binary string $S$, we will frequently abbreviate $+1$ and $-1$ by $+$ and $-$.

Let us choose any such binary string in the first set, $S \in \mathcal{S}^{+}_{M,N}$. Let $p(i)$ be the number of pluses in the substring $(S(1), S(2), \dots, S(i))$, and let $m(i)$ be the number of minuses in the same substring.
Consider a collection of partitions $\la^{(1)} * \la^{(2)} * \dots * \la^{(M+N-1)}$, where $*$ stands for either $\subset$ or $\supset$ in the following way: if $S(i)=+$, then we have $\la^{(i-1)} \subset \la^{(i)}$; if $S(i)=-$, then $\la^{(i-1)} \supset \la^{(i)}$.

Let $\rho_1, \dots, \rho_N$ be Macdonald-positive specializations of the algebra of symmetric functions.
Set
\begin{equation}
\label{eq:HL-proc-def-2/55}
W^{(S,i)}_{ \{ a_j \}, \{ \rho_j \} } :=
\begin{cases}
P_{\la^{(i)} / \la^{(i-1)}} (a_{p(i)}), \qquad & \mbox{if $S(i)=+$}, \\
Q_{\la^{(i-1)} / \la^{(i)}} (\rho_{N-m(i)+1}), \qquad & \mbox{if $S(i)=-$},
\end{cases}
\end{equation}
for all $1 \leq i \leq M+N$, where by agreement $\la^{(0)} = \la^{(M+N)} = \varnothing$, the empty partition.
For certain conditions on $\{ a_j \}$ and $\{ \rho_j \}$ one can define a probability measure on collections of partitions $\la^{(1)}* \la^{(2)} \dots * \la^{(M+N-1)}$ by the formula
\begin{equation}
\label{eq:HL-proc-def-2/5}
\mathrm{Prob}_{ \{ a_j \}, \{ \rho_j \} }^S ( \la^{(1)} * \la^{(2)} * \dots * \la^{(M+N-1)} ) := \frac{\displaystyle \prod_{i=1}^{M+N} W^{(S,i)}_{\{a_j\},\{\rho_j\} }}{C( \{a_j \}; \{ \rho_j \}) },
\end{equation}
where $C( \{a_j \}; \{ \rho_j \})$ is a normalization constant; its explicit form will not be important for us. As in Section \ref{sec:HL-process-usual}, we will need two specific examples of this construction. First, for all $1 \le i \le N$ let us take the specialization $\rho_i$ to be the one variable evaluation $x_1 \mapsto b_i$ of the algebra of symmetric functions (recall that parameters $\{ b_i \}$ satisfy \eqref{eq:cond-HL}). Then \eqref{eq:HL-proc-def-2/55} reads
$$
W^{(S,i)}_{ M, N } :=
\begin{cases}
P_{\la^{(i)} / \la^{(i-1)}} (a_{p(i)}), \qquad & \mbox{if $S(i)=+$}, \\
Q_{\la^{(i-1)} / \la^{(i)}} (b_{N-m(i)+1}), \qquad & \mbox{if $S(i)=-$},
\end{cases}
$$
and we obtain the measure
\begin{equation}
\label{eq:HL-proc-def-2}
\mathrm{Prob}_{M,N}^S ( \la^{(1)} * \la^{(2)} * \dots * \la^{(M+N-1)} ) := \frac{\displaystyle \prod_{i=1}^{M+N} W^{(S,i)}_{M,N}}{\Pi^{S}(a_1,\dots,a_M; b_1,\dots,b_N) },
\end{equation}
where the normalization is given by
\begin{align*}
\Pi^{S}(a_1,\dots,a_M; b_1,\dots,b_N)
=
\prod_{\substack{1 \leq i<j \leq M+N \\ (S(i),S(j)) =(+,-)}}
 \frac{1 - t a_{p(i)} b_{N-m(j)+1}}{1 - a_{p(i)} b_{N-m(j)+1}}.
\end{align*}
We refer again to \cite[Section 2]{BC} for the proof of correctness of such a definition for any $\{a_i \}$ and $\{b_j\}$ which satisfy \eqref{eq:cond-HL}.

It can be straightforwardly checked that for a string $S=(+,\dots,+,-,\dots,-)$, the distribution of $(\lambda^{(1)}, \dots, \la^{(M)})$ coincides with $\mathrm{Prob}_{M,N}$ given by \eqref{eq:HL-proc-def-1}. Also, it can be checked that the marginal distribution of $\lambda^{(i)}$ is the Hall--Littlewood measure with parameters $a_1, \dots, a_{p(i)}$ and $b_1, \dots, b_{N-m(i)}$; see \cite[Section 2.2]{BC}.

We denote by $\lambda'_1 (m,N,S)$ the length of the first column of a random partition $\la^{(m)}$ coming from the process \eqref{eq:HL-proc-def-2}.

For another choice of specializations, let us fix reals $0\le \tau_1 \le \tau_2 \le \dots \le \tau_N$, and let $\rho_1$ be the Plancherel specialization with the parameter $\tau_2 - \tau_1$, ..., $\rho_{N-1}$ be the Plancherel specialization with the parameter $\tau_{N} - \tau_{N-1}$, and let $\rho_N$ be the Plancherel specialization with the parameter $\tau_N$. We will use the process \eqref{eq:HL-proc-def-2/5} with these specializations in Section \ref{sec:RSKdyn}. 


\subsection{A convenient random variable}
\label{sec:support}

It is helpful to define a further random variable coming from the process \eqref{eq:HL-proc-def-2}. We call it the \textit{support} of the sequence $\la^{(1)}* \dots * \la^{(M+N-1)}$, and denote it by $[\la^{(1)}* \dots * \la^{(M+N-1)}]$. It is a skew Young diagram $\nu/\mu$ formed by two partitions $\mu \subset \nu$, where $\mu$ and $\nu$ are obtained in the following way:
\begin{enumerate}[{\bf 1.}]
\item $\mu$ is the partition obtained from the binary string $S$, via the identification explained in Section \ref{sec:partitions}; \ie $\mu \equiv \mu(S)$. Clearly $\mu$ is not random at all, since it is already specified with the choice of measure \eqref{eq:HL-proc-def-2}.

\item Construct from $\la^{(1)}* \dots * \la^{(M+N-1)}$ a second binary string
\begin{align*}
T = (T(1),\dots,T(M+N)) \in \mathcal{S}_{M,N}^{-}.
\end{align*}
For all $1 \leq i \leq M+N$, we take $T(i) = +$ if $S(i) = +$ and $\lambda'_1(i-1) = \lambda'_1(i)$, or if $S(i) = -$ and $\lambda'_1(i-1) = \lambda'_1(i) + 1$. Conversely, we take $T(i) = -$ if $S(i) = +$ and $\lambda'_1(i-1) = \lambda'_1(i) - 1$, or if $S(i) = -$ and $\lambda'_1(i-1) = \lambda'_1(i)$. From this, define the second partition $\nu \equiv \nu(T)$.
\end{enumerate}
The support of the sequence $\la^{(1)}* \dots * \la^{(M+N-1)}$ has a natural interpretation on plane partitions. Any two neighbouring partitions in this sequence are required to interlace (otherwise the measure \eqref{eq:HL-proc-def-2} vanishes), meaning that we can interpret $\lambda^{(i)}$ as the $i$-th diagonal slice in a skew plane partition $\pi$. $[\la^{(1)}* \dots * \la^{(M+N-1)}]$ is then the skew Young diagram on which
$\pi$ is supported: $\mu(S)$ traces the ``inner'' partition, $\nu(T)$ traces the ``outer'' partition, and the plane partition is valued on $\nu/\mu$. In Figures \ref{fig:pp1} and \ref{fig:pp2} we give two examples; Figure \ref{fig:pp1} illustrates support in the case of ascending Hall--Littlewood processes, while Figure \ref{fig:pp2} is an example of the fully generic processes of Section \ref{sec:general-HL}.

\begin{figure}
\begin{tabular}{cc}
\begin{tikzpicture}[scale=0.7,baseline=(current bounding box.center),>=stealth]
\planepartition{{3,3,3,3,2,2},{3,3,1},{3,2,1},{1}}
\draw[ultra thick,->] ({-2*\sq},-2) -- node[midway,below] {\tiny $1$} ({-1.5*\sq},-2.5);
\draw[ultra thick,gray,->] ({-1.5*\sq},-2.5) -- node[midway,below] {\tiny $2$} ({-\sq},-2);
\draw[ultra thick,->] ({-\sq},-2) -- node[midway,below] {\tiny $3$} ({-0.5*\sq},-2.5);
\draw[ultra thick,->] ({-0.5*\sq},-2.5) -- node[midway,below] {\tiny $4$} (0,-3);
\draw[ultra thick,gray,->] (0,-3) -- node[midway,below] {\tiny $5$} ({0.5*\sq},-2.5);
\draw[ultra thick,gray,->] ({0.5*\sq},-2.5) -- node[midway,below] {\tiny $6$} ({\sq},-2);
\draw[ultra thick,->] ({\sq},-2) -- node[midway,below] {\tiny $7$} ({1.5*\sq},-2.5);
\draw[ultra thick,->] ({1.5*\sq},-2.5) -- node[midway,below] {\tiny $8$} ({2*\sq},-3);
\draw[ultra thick,->] ({2*\sq},-3) -- node[midway,below] {\tiny $9$} ({2.5*\sq},-3.5);
\draw[ultra thick,gray,->] ({2.5*\sq},-3.5) -- node[midway,below] {\tiny $10$} ({3*\sq},-3);
\end{tikzpicture}
\quad
&
\quad
\begin{tikzpicture}[scale=0.7,baseline=(current bounding box.center),>=stealth,rotate=-45]
\draw (0,4) -- (6,4);
\draw (0,3) -- (6,3);
\draw (0,2) -- (3,2);
\draw (0,1) -- (3,1);
\draw (0,0) -- (1,0);
\draw (0,0) -- (0,4);
\draw (1,0) -- (1,4);
\draw (2,1) -- (2,4);
\draw (3,1) -- (3,4);
\draw (4,3) -- (4,4);
\draw (5,3) -- (5,4);
\draw (6,3) -- (6,4);
\draw[ultra thick,->] (0,0) -- (1,0);
\draw[ultra thick,gray,->] (1,0) -- (1,1);
\draw[ultra thick,->] (1,1) -- (2,1);
\draw[ultra thick,->] (2,1) -- (3,1);
\draw[ultra thick,gray,->] (3,1) -- (3,2);
\draw[ultra thick,gray,->] (3,2) -- (3,3);
\draw[ultra thick,->] (3,3) -- (4,3);
\draw[ultra thick,->] (4,3) -- (5,3);
\draw[ultra thick,->] (5,3) -- (6,3);
\draw[ultra thick,gray,->] (6,3) -- (6,4);
\node[text centered] at (0.5,3.5) {$3$};
\node[text centered] at (1.5,3.5) {$3$};
\node[text centered] at (2.5,3.5) {$3$};
\node[text centered] at (3.5,3.5) {$3$};
\node[text centered] at (0.5,2.5) {$3$};
\node[text centered] at (1.5,2.5) {$3$};
\node[text centered] at (0.5,1.5) {$3$};
\node[text centered] at (1.5,1.5) {$2$};
\node[text centered] at (4.5,3.5) {$2$};
\node[text centered] at (5.5,3.5) {$2$};
\node[text centered] at (2.5,2.5) {$1$};
\node[text centered] at (2.5,1.5) {$1$};
\node[text centered] at (0.5,0.5) {$1$};
\foreach\x in {1,...,4}{
\draw[dotted] (0,\x) -- (-1,\x+1);
\node at (-1.2,\x+1.2) {\tiny $\lambda^{(\x)}$};
}
\foreach\x in {5,...,9}{
\draw[dotted] (\x-4,4) -- (\x-5,5);
\node at (\x-5.2,5.2) {\tiny $\lambda^{(\x)}$};
}
\end{tikzpicture}
\end{tabular}
\caption{Three-dimensional representation of a plane partition and corresponding two-dimensional projection. In this case $M=4$, $N=6$, $S = (+,+,+,+,-,-,-,-,-,-)$, and the sequence $\lambda^{(1)} \subset \cdots \subset \lambda^{(4)} \supset \cdots \supset \lambda^{(9)}$ can be read as the diagonal slices of the plane partition. The steps numbered $1$ through $10$ determine that $T = (-,+,-,-,+,+,-,-,-,+)$. The support of the sequence is just the Young diagram on which the plane partition is supported; we see that $[\lambda^{(1)} \subset \cdots \subset \lambda^{(4)} \supset \cdots \supset \lambda^{(9)}] = \nu(T)/\mu(S) = (6,3,3,1) / \varnothing$.}
\label{fig:pp1}
\end{figure}
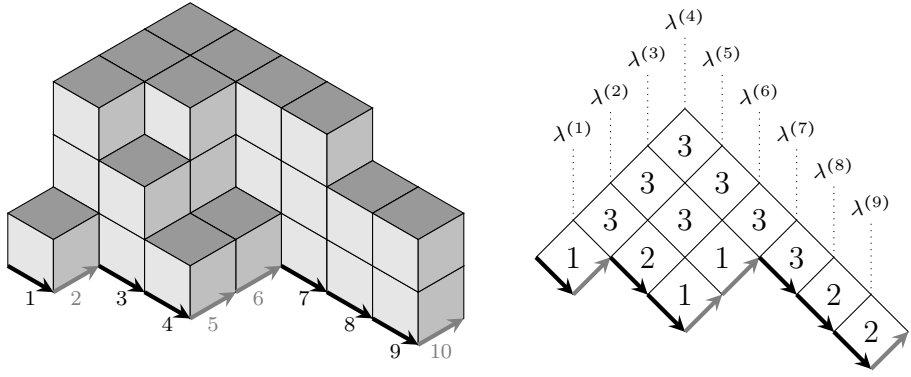

\begin{figure}
\begin{tabular}{cc}
\begin{tikzpicture}[scale=0.7,baseline=(current bounding box.center),>=stealth]
\planepartition{{0,0,3,2,1,1},{0,2,1},{2,2,1},{1}}
\draw[ultra thick,->] ({-2*\sq},-2) -- node[midway,below] {\tiny $1$} ({-1.5*\sq},-2.5);
\draw[ultra thick,gray,->] ({-1.5*\sq},-2.5) -- node[midway,below] {\tiny $2$} ({-\sq},-2);
\draw[ultra thick,->] ({-\sq},-2) -- node[midway,below] {\tiny $3$} ({-0.5*\sq},-2.5);
\draw[ultra thick,->] ({-0.5*\sq},-2.5) -- node[midway,below] {\tiny $4$} (0,-3);
\draw[ultra thick,gray,->] (0,-3) -- node[midway,below] {\tiny $5$} ({0.5*\sq},-2.5);
\draw[ultra thick,gray,->] ({0.5*\sq},-2.5) -- node[midway,below] {\tiny $6$} ({\sq},-2);
\draw[ultra thick,->] ({\sq},-2) -- node[midway,below] {\tiny $7$} ({1.5*\sq},-2.5);
\draw[ultra thick,->] ({1.5*\sq},-2.5) -- node[midway,below] {\tiny $8$} ({2*\sq},-3);
\draw[ultra thick,->] ({2*\sq},-3) -- node[midway,below] {\tiny $9$} ({2.5*\sq},-3.5);
\draw[ultra thick,gray,->] ({2.5*\sq},-3.5) -- node[midway,below] {\tiny $10$} ({3*\sq},-3);
\end{tikzpicture}
\quad
&
\quad
\begin{tikzpicture}[scale=0.7,baseline=(current bounding box.center),>=stealth,rotate=-45]
\draw (2,4) -- (6,4);
\draw (1,3) -- (6,3);
\draw (0,2) -- (3,2);
\draw (0,1) -- (3,1);
\draw (0,0) -- (1,0);
\draw (0,0) -- (0,2);
\draw (1,0) -- (1,3);
\draw (2,1) -- (2,4);
\draw (3,1) -- (3,4);
\draw (4,3) -- (4,4);
\draw (5,3) -- (5,4);
\draw (6,3) -- (6,4);
\node[text centered] at (2.5,3.5) {$3$};
\node[text centered] at (3.5,3.5) {$2$};
\node[text centered] at (4.5,3.5) {$1$};
\node[text centered] at (5.5,3.5) {$1$};
\node[text centered] at (1.5,2.5) {$2$};
\node[text centered] at (2.5,2.5) {$1$};
\node[text centered] at (0.5,1.5) {$2$};
\node[text centered] at (1.5,1.5) {$2$};
\node[text centered] at (2.5,1.5) {$1$};
\node[text centered] at (0.5,0.5) {$1$};
\foreach\x in {1,...,2}{
\draw[dotted] (0,\x) -- (-1,\x+1);
\node at (-1.2,\x+1.2) {\tiny $\lambda^{(\x)}$};
}
\draw[dotted] (1,2) -- (0,3); \node at (-0.2,3.2) {\tiny $\lambda^{(3)}$};
\draw[dotted] (1,3) -- (0,4); \node at (-0.2,4.2) {\tiny $\lambda^{(4)}$};
\draw[dotted] (2,3) -- (1,4); \node at (0.8,4.2) {\tiny $\lambda^{(5)}$};
\foreach\x in {6,...,9}{
\draw[dotted] (\x-4,4) -- (\x-5,5);
\node at (\x-5.2,5.2) {\tiny $\lambda^{(\x)}$};
}
\draw[ultra thick,gray,->] (0,0) -- (0,1);
\draw[ultra thick,gray,->] (0,1) -- (0,2);
\draw[ultra thick,->] (0,2) -- (1,2);
\draw[ultra thick,gray,->] (1,2) -- (1,3);
\draw[ultra thick,->] (1,3) -- (2,3);
\draw[ultra thick,gray,->] (2,3) -- (2,4);
\draw[ultra thick,->] (2,4) -- (3,4);
\draw[ultra thick,->] (3,4) -- (4,4);
\draw[ultra thick,->] (4,4) -- (5,4);
\draw[ultra thick,->] (5,4) -- (6,4);
\draw[ultra thick,->] (0,0) -- (1,0);
\draw[ultra thick,gray,->] (1,0) -- (1,1);
\draw[ultra thick,->] (1,1) -- (2,1);
\draw[ultra thick,->] (2,1) -- (3,1);
\draw[ultra thick,gray,->] (3,1) -- (3,2);
\draw[ultra thick,gray,->] (3,2) -- (3,3);
\draw[ultra thick,->] (3,3) -- (4,3);
\draw[ultra thick,->] (4,3) -- (5,3);
\draw[ultra thick,->] (5,3) -- (6,3);
\draw[ultra thick,gray,->] (6,3) -- (6,4);
\end{tikzpicture}
\end{tabular}
\caption{A skew plane partition and its two-dimensional projection. In this case $M=4$, $N=6$, $S=(+,+,-,+,-,+,-,-,-,-)$ and we have the sequence $\la^{(1)} \subset \la^{(2)} \supset \la^{(3)} \subset \la^{(4)} \supset \la^{(5)} \subset \la^{(6)} \supset \cdots \supset \la^{(9)}$. The support is now given by
$[\la^{(1)} * \cdots * \la^{(9)}] = \nu(T)/\mu(S) = (6,3,3,1) / (2,1)$.}
\label{fig:pp2}
\end{figure}
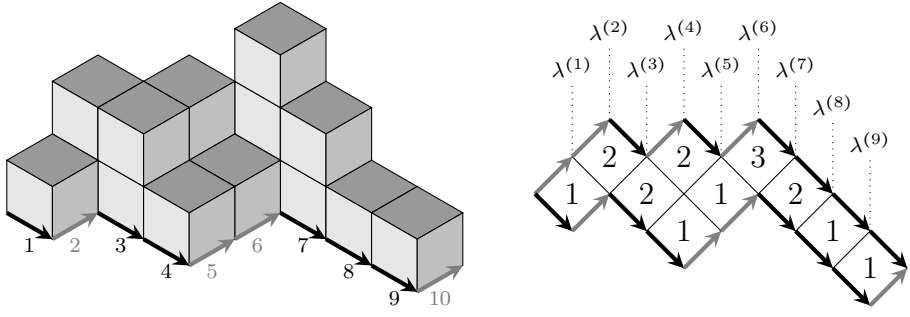

We will be interested in the distribution of this random variable:
\begin{multline}
\label{eq:distrib-hl}
{\rm Prob}^{S}_{M,N}(
[\la^{(1)}* \dots * \la^{(M+N-1)}]
=
\nu/\mu)
:=
\\
\sum_{\la^{(1)},\dots,\la^{(M+N-1)}}
\mathrm{Prob}_{M,N}^S ( \la^{(1)} * \dots * \la^{(M+N-1)} )
\bm{1}_{[\la^{(1)}* \dots * \la^{(M+N-1)}]
=
\nu/\mu}.
\end{multline}

\section{Stochastic six vertex model in a quadrant}
\label{sec:6v}

\subsection{Definition of the model}
\label{sec:6v-defn}

In this section we recall a stochastic inhomogeneous six vertex model in a quadrant considered in \cite{BP1}. We refer to Section 1 of \cite{BP1} for the description of a more general higher spin model.

Let us consider a square lattice $\{ (x,y): x\in \mathbb{Z}_{\ge 1}, y \in \mathbb{Z}_{\ge 1} \}$. A \textit{state} of a six vertex model is a collection of edges from this lattice such that
each of the vertices has one of six types shown in Figure \ref{fig:6types} (in the caption to this figure we also introduce a notation for these types of vertices); for boundary vertices one edge will be given by a boundary condition shown in Figure \ref{fig:stoh6vert_bound}. For convenience, one might orient these edges either up or to the right (depending on whether an edge is vertical or horizontal). Then a state can be described as a collection of up-right paths which cannot go through the same edges.

\begin{figure}
\begin{tabular}{cccccc}
\begin{tikzpicture}[>=stealth,scale=0.8]
\draw[dotted,thick] (-1,0) -- (1,0);
\draw[dotted,thick] (0,-1) -- (0,1);
\end{tikzpicture}
&
\begin{tikzpicture}[>=stealth,scale=0.8]
\draw[dotted,thick] (-1,0) -- (1,0);
\draw[dotted,thick] (0,-1) -- (0,1);
\draw[ultra thick,->] (-1,0) -- (0,0);
\draw[ultra thick,->] (0,0) -- (0,1);
\draw[ultra thick,->] (0,-1) -- (0,0);
\draw[ultra thick,->] (0,0) -- (1,0);
\end{tikzpicture}
&
\begin{tikzpicture}[>=stealth,scale=0.8]
\draw[dotted,thick] (-1,0) -- (1,0);
\draw[dotted,thick] (0,-1) -- (0,1);
\draw[ultra thick,->] (0,-1) -- (0,0);
\draw[ultra thick,->] (0,0) -- (1,0);
\end{tikzpicture}
&
\begin{tikzpicture}[>=stealth,scale=0.8]
\draw[dotted,thick] (-1,0) -- (1,0);
\draw[dotted,thick] (0,-1) -- (0,1);
\draw[ultra thick,->] (-1,0) -- (0,0);
\draw[ultra thick,->] (0,0) -- (0,1);
\end{tikzpicture}
&
\begin{tikzpicture}[>=stealth,scale=0.8]
\draw[dotted,thick] (-1,0) -- (1,0);
\draw[dotted,thick] (0,-1) -- (0,1);
\draw[ultra thick,->] (0,-1) -- (0,1);
\end{tikzpicture}
&
\begin{tikzpicture}[>=stealth,scale=0.8]
\draw[dotted,thick] (-1,0) -- (1,0);
\draw[dotted,thick] (0,-1) -- (0,1);
\draw[ultra thick,->] (-1,0) -- (1,0);
\end{tikzpicture}
\end{tabular}
\caption{We denote these 6 types of vertices by symbols $\nul$, $\all$, $\ur$, $\ru$, $\uu$, $\rr$, respectively.}
\label{fig:6types}
\end{figure}

\begin{figure}
\begin{tabular}{cc}
\begin{tikzpicture}[scale=0.7,>=stealth,baseline=(current bounding box.center)]
\foreach\x in {1,...,7}{
\draw[dotted,thick] (\x,0) -- (\x,7);
\node[below] at (\x,0) {\tiny $\x$};
}
\foreach\y in {1,...,6}{
\draw[dotted,thick] (0,\y) -- (8,\y);
\node[left] at (0,\y) {\tiny $\y$};
\draw[ultra thick,->] (0,\y) -- (1,\y);
}
\node[text centered] at (0.2,-0.5) {\tiny $x$};
\node[text centered] at (-0.5,0.2) {\tiny $y$};
\node[text centered] at (-0.2,6.9) {$\vdots$};
\node at (7.9,-0.2) {$\cdots$};
\end{tikzpicture}
\quad
&
\quad
\begin{tikzpicture}[scale=0.7,>=stealth,baseline=(current bounding box.center)]
\foreach\x in {1,...,7}{
\draw[dotted,thick] (\x,0) -- (\x,7);
\node[below] at (\x,0) {\tiny $\x$};
}
\foreach\y in {1,...,6}{
\draw[dotted,thick] (0,\y) -- (8,\y);
\node[left] at (0,\y) {\tiny $\y$};
}
\draw[ultra thick,->] (0,1) -- (5,1); \draw[ultra thick,->] (5,1) -- (5,2); \draw[ultra thick,->] (5,2) -- (6,2); \draw[ultra thick,->] (6,2) -- (6,5); \draw[ultra thick,->] (6,5) -- (7,5); \draw[ultra thick,->] (7,5) -- (7,6); \draw[ultra thick,->] (7,6) -- (8,6);
\draw[ultra thick,->] (0,2) -- (4,2); \draw[ultra thick,->] (4,2) -- (4,3); \draw[ultra thick,->] (4,3) -- (5,3); \draw[ultra thick,->] (5,3) -- (5,5); \draw[ultra thick,->] (5,5) -- (6,5); \draw[ultra thick,->] (6,5) -- (6,6); \draw[ultra thick,->] (6,6) -- (7,6); \draw[ultra thick,->] (7,6) -- (7,7);
\draw[ultra thick,->] (0,3) -- (4,3); \draw[ultra thick,->] (4,3) -- (4,5); \draw[ultra thick,->] (4,5) -- (5,5); \draw[ultra thick,->] (5,5) -- (5,6); \draw[ultra thick,->] (5,6) -- (6,6); \draw[ultra thick,->] (6,6) -- (6,7);
\draw[ultra thick,->] (0,4) -- (3,4); \draw[ultra thick,->] (3,4) -- (3,5); \draw[ultra thick,->] (3,5) -- (4,5); \draw[ultra thick,->] (4,5) -- (4,6); \draw[ultra thick,->] (4,6) -- (5,6); \draw[ultra thick,->] (5,6) -- (5,7);
\draw[ultra thick,->] (0,5) -- (2,5); \draw[ultra thick,->] (2,5) -- (2,6); \draw[ultra thick,->] (2,6) -- (4,6); \draw[ultra thick,->] (4,6) -- (4,7);
\draw[ultra thick,->] (0,6) -- (2,6); \draw[ultra thick,->] (2,6) -- (2,7);
\node[text centered] at (0.2,-0.5) {\tiny $x$};
\node[text centered] at (-0.5,0.2) {\tiny $y$};
\node[text centered] at (-0.2,6.9) {$\vdots$};
\node at (7.9,-0.2) {$\cdots$};
\end{tikzpicture}
\end{tabular}
\caption{Left panel: boundary conditions. Right panel: an example of a state. }
\label{fig:stoh6vert_bound}
\end{figure}
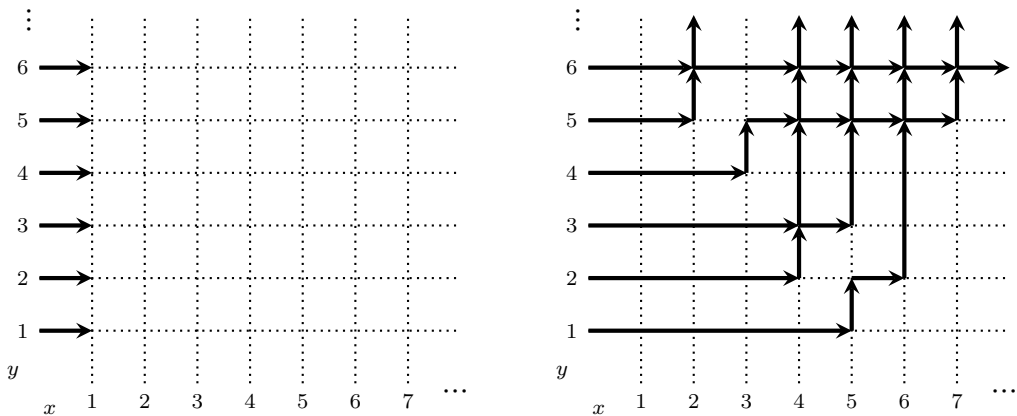

Let us fix a set of parameters $\{ \xi_x \}_{x \in \mathbb{Z}_{\ge 1}}$, $ \{ u_y \}_{y \in \mathbb{Z}_{\ge 1}}$, and $Q$ (which stands for $q$ from \cite{BP1}). We assume that these parameters satisfy
\begin{equation}
\label{eq:cond-6vert}
0<Q<1, \qquad \xi_x >0, u_y >0, \qquad \xi_x u_y > 1 / \sqrt{Q}, \qquad \mbox{for all $x$ and $y$.}
\end{equation}

We define a probability measure on states in the following Markovian way. For any $n\ge 2$ assume that we already have a probability distribution on restrictions of our states to all the vertices $(x,y)$ such that $x+y<n$. Let us increase $n$ by 1. For every point $(x,y)$ such that $x+y=n$ a configuration on vertices with $x+y<n$ specify whether we have entering arrows along the edge at the bottom and/or along the edge to the left of $(x,y)$. Using this information and probability weights
\begin{align*}
& \mathrm{Prob} (\nul) = \mathrm{Prob} (\all) = 1, \qquad \mathrm{Prob} (\rr) = \frac{\frac{1}{Q} - \frac{1}{\sqrt{Q}} \xi_x u_y}{1-\frac{1}{\sqrt{Q}} \xi_x u_y}, \qquad  \mathrm{Prob} (\ru) = \frac{1 - \frac{1}{Q}}{1-\frac{1}{\sqrt{Q}} \xi_x u_y}, \\
& \mathrm{Prob} (\uu) = \frac{1 - \sqrt{Q} \xi_x u_y}{1-\frac{1}{\sqrt{Q}} \xi_x u_y}, \qquad \mathrm{Prob} (\ur) = \frac{(Q-1) \frac{1}{\sqrt{Q}} \xi_x u_y}{1-\frac{1}{\sqrt{Q}} \xi_x u_y},
\end{align*}
we assign a type to the vertex $(x,y)$ (which means we choose the outgoing arrows at the top and to the right of $(x,y)$). Since all these expressions are non-negative, and $\mathrm{Prob} (\rr) + \mathrm{Prob} (\ru)=1$, $ \mathrm{Prob} (\uu) + \mathrm{Prob} (\ur)=1$, this procedure correctly defines a probability measure on all states.

\subsection{$Q$-moments of the height function}

The \textit{height function} $h(x,y)$ is defined as the number of up-right paths which go through or to the right of the point $(x,y)$. It is clear that its value is between $0$ and $y$.
We are interested in the multi-point distribution of the height function at points $(x_1, N), \dots, (x_k,N)$, for any $x_1, \dots, x_k, N$.


\begin{definition}
\label{def:VERTmom}
For positive integers $N,k$, and $m_1 \ge m_2 \ge \dots \ge m_k \ge 1$, a collection of curves $\delta_1$, $\dots$, $\delta_k$ in the complex plane is called \textit{suitable for} $(Q, \{\xi_x \}_{x=1}^{m_1}, \{u_y \}_{y=1}^N)$, if the following conditions hold:
\begin{enumerate}[{\bf 1.}]

\item For any $1 \le r \le k$ the curve $\delta_r$ is a union of finitely many simple positively oriented smooth closed curves.

\item For any $1 \le r \le k$ the curve $\delta_r$ encircles $0, u_1^{-1}, \dots, u_N^{-1}$, and does not encircle any of the points $\{ \sqrt{Q} / \xi_x \}_{x=1}^{m_r}$.

\item For any $1 \le r <s \le k$ the interior of the curve $\delta_r$ contains no points from the curve $Q \delta_s$.

\end{enumerate}

\end{definition}

See Figure \ref{fig:contoursHLand6vert} (right panel) for an example of such contours and parameters.

\begin{proposition}
\label{prop:VERTmom}
Let $N,k$, and $m_1 \ge m_2 \ge \dots \ge m_k \ge 1$ be positive integers. Assume that the collection of curves $\delta_1$, $\dots$, $\delta_k$ is suitable for $(Q, \{\xi_x \}_{x=1}^{m_1}, \{u_y \}_{y=1}^N)$. We have
\begin{multline}
\label{eq:VERTmom}
\mathbf E \prod_{i=1}^k Q^{h (m_i+1,N)} = \frac{ Q^{k(k-1)/2}}{(2 \pi \ii)^k} \oint_{\delta_1} \frac{d w_1}{w_1} \dots \oint_{\delta_k} \frac{d w_k}{w_k} \prod_{1 \le \alpha < \beta \le k} \frac{w_{\alpha} - w_{\beta}}{w_{\alpha} - t w_{\beta}} \\ \times \prod_{l=1}^k \prod_{j=1}^N \frac{1 - Q w_l u_j}{1 - w_l u_j} \prod_{l=1}^k \prod_{i=1}^{m_l} \frac{ \xi_{i} - \frac{1}{\sqrt{Q}} w_{l} }{ \xi_{i} - \sqrt{Q} w_{l}}.
\end{multline}
\end{proposition}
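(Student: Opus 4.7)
The plan is to reduce the statement to a known multi-point $q$-moment formula for stochastic higher-spin vertex models from \cite{BP1}. The first step is to observe that the stochastic six vertex model defined in Section \ref{sec:6v-defn} is the spin-$1/2$ specialization of the general higher-spin stochastic vertex model of \cite{BP1}: I would identify the parameter $q$ of that paper with $Q$ here, identify the horizontal and vertical rapidities $\{\xi_x\}$, $\{u_y\}$ with their analogues there, and specialize the spin parameters so that each vertical edge carries at most one arrow. A direct comparison of the six stochastic weights displayed in Section \ref{sec:6v-defn} against the stochastic weights tabulated in \cite{BP1} confirms the matching.

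The second step is to invoke the multi-point $Q$-moment formula for $\mathbf{E} \prod_i Q^{h(x_i, y_i)}$ from \cite{BP1}, specialized at the observation points $(x_i, y_i) = (m_i+1, N)$, $i = 1, \dots, k$, with the empty boundary condition on the bottom row. The integrand of that formula consists of: a prefactor $Q^{k(k-1)/2}$; the Cauchy-type antisymmetrization $\prod_{\alpha < \beta}(w_\alpha - w_\beta)/(w_\alpha - Q w_\beta)$; a vertical-edge contribution $\prod_l \prod_j (1 - Q w_l u_j)/(1 - w_l u_j)$ with $N$ factors, one per row swept by the observable; and a horizontal-edge contribution $\prod_l \prod_i (\xi_i - Q^{-1/2} w_l)/(\xi_i - Q^{1/2} w_l)$ of length $m_l$ reflecting the horizontal location of the $l$-th observation point. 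This matches \eqref{eq:VERTmom} term by term.

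The third step is to verify that the three contour conditions of Definition \ref{def:VERTmom} coincide with what the derivation in \cite{BP1} requires. Encirclement of $\{u_j^{-1}\}$ and $0$, together with the exclusion of $\{\sqrt{Q}/\xi_i\}$, ensures that upon collapsing contours the integrand picks up only the ``correct'' residues coming from the vertical rapidities. The nesting condition that the interior of $\delta_r$ contain no points of $Q \delta_s$ for $r < s$ prevents spurious residues from the simple poles at $w_\alpha = Q w_\beta$ in the antisymmetrization factor when one contour is moved across another.

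The main obstacle here is purely notational: \cite{BP1} works in substantial generality (two sets of spin parameters, $q$-deformed Bernoulli weights on each edge, arbitrary inhomogeneous rapidities), so the bulk of the work is tracing the dictionary to extract the six-vertex stochastic specialization at hand and confirming that the various sign, shift, and normalization conventions (in particular the $+1$ shift in the first argument of $h$, which is what produces $\prod_{i=1}^{m_l}$ rather than $\prod_{i=1}^{m_l+1}$ in the integrand) line up correctly. The parallel with Proposition \ref{prop:HL-mom}, whose proof was likewise a reference to the existing Macdonald-process literature, makes the structure of the argument clear: identify the source, specialize, and reconcile contour conditions.
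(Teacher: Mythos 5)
Your proposal takes essentially the same route as the paper, whose entire proof is a one-line reduction to the multi-point $Q$-moment formula of \cite[Theorem 9.8]{BP1}: specialize the higher-spin model to the six-vertex case, match the parameter conventions (including the shift producing $h(m_i+1,N)$), and compare contours. The one point you pass over is the step the paper explicitly flags, namely that the statement is an \emph{analytic continuation} of that theorem: the hypotheses of \cite[Theorem 9.8]{BP1} (the parameter ranges under which their prescribed nested contours exist and their derivation applies) are more restrictive than the generality of \eqref{eq:cond-6vert} together with an arbitrary collection of contours suitable in the sense of Definition \ref{def:VERTmom}, so rather than claiming the contour conditions literally coincide, one should argue that both sides of \eqref{eq:VERTmom} are rational (hence analytic) in the parameters for fixed $k, N, m_1,\dots,m_k$, that suitable contours can be deformed into one another without crossing poles of the integrand, and that the two sides agree on an open set of parameters where the hypotheses of \cite{BP1} hold, whence they agree throughout. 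With that continuation argument added, your proof is complete and matches the paper's.
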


\begin{proof}
This is an analytic continuation of \cite[Theorem 9.8]{BP1}.
\end{proof}

\subsection{Edge distribution on rectangular domains}
\label{sec:edge-rect}

Let us now consider the stochastic six vertex model on a truncated domain: namely, we consider states that live on the finite lattice with vertices at $\{(x,y) : 1 \leq x \leq M, 1 \leq y \leq N\}$:
\begin{align*}
\begin{tikzpicture}[scale=0.8,>=stealth,baseline=(current bounding box.center)]
\foreach\x in {1,...,6}{
\node[below] at (\x,0) {\tiny $\x$};
\draw[dotted,thick] (\x,0) -- (\x,5);
\draw[thick,gray,->] (\x,5) -- (\x,6);
}
\foreach\y in {1,...,5}{
\node[left] at (0,\y) {\tiny $\y$};
\draw[ultra thick,->] (0,\y) -- (1,\y);
\draw[dotted,thick] (0,\y) -- (6,\y);
\draw[thick,gray,->] (6,\y) -- (7,\y);
}
\foreach\x in {1,...,6}{
\node[above] at (\x,6) {\tiny $T(\x)$};
}
\foreach\y in {7,...,11}{
\node[right] at (7,12-\y) {\tiny $T(\y)$};
}
\draw[thick,rounded corners,<-] (8.3,1) -- (8.3,7) -- (1,7);
\node[text centered] at (0.2,-0.5) {\tiny $x$}; 
\node[text centered] at (-0.5,0.2) {\tiny $y$}; 
\end{tikzpicture}
\end{align*}
The vertices $(x,N)$, $1 \leq x \leq M-1$ have a single edge that protrudes (upward) out of the region under consideration. Likewise, the vertices $(M,y)$, $1 \leq y \leq N-1$ have a single edge that protrudes (rightward) outside the region, while $(M,N)$ itself has two protruding edges. Collectively, we refer to these as the {\it outgoing edges} of the lattice.

In any state of the model, exactly $N$ of the outgoing edges will be occupied by up-right paths, since the total number of these paths is conserved. Numbering the outgoing edges sequentially from the top left corner of the lattice to the bottom right corner (from $1$ up to $M+N$), let $T(i)=+$ if edge $i$ is unoccupied by a path and $T(i)=-$ if it is occupied, in this way producing a binary string $T = (T(1),\dots,T(M+N))$. Associate to this string the partition $\nu = \nu(T)$. If $\sigma$ denotes a state of the six vertex model on the $M \times N$ lattice, we write $\mathcal{O}(\sigma) = \nu$ to indicate that the outgoing edges of $\sigma$ encode the partition $\nu$.

We shall be interested in the probability distribution
\begin{align*}
{\rm Prob}^{\rm 6V}_{M,N}
(\mathcal{O}(\sigma) = \nu)
:=
\sum_{\sigma}
{\rm Prob}^{\rm 6V}_{M,N} (\sigma)
\bm{1}_{\mathcal{O}(\sigma) = \nu}
\end{align*}
where ${\rm Prob}^{\rm 6V}_{M,N}(\sigma)$ is the probability of the state $\sigma$, which can be obtained either as the product of all local Boltzmann weights appearing in $\sigma$, or through the Markovian procedure described in Section \ref{sec:6v-defn}.

\subsection{Edge distribution on domains with a jagged edge}
\label{sec:jagged}

We can work on a slightly more general class of domains. The playing field is still the positive quadrant $\{ (x,y): x\in \mathbb{Z}_{\ge 1}, y \in \mathbb{Z}_{\ge 1} \}$, but instead of working on the rectangular $M \times N$ domain, we consider the effect of truncating the lattice along some boundary that is not necessarily rectangular (see Figure \ref{fig:jagged}).
\begin{figure}
\begin{tabular}{cc}
\begin{tikzpicture}[scale=0.8,>=stealth,baseline=(current bounding box.center)]
\foreach\x in {1,...,6}{
\draw[dotted,thick] (\x,0) -- (\x,5);
\node[below] at (\x,0) {\tiny $\x$};
}
\foreach\y in {1,...,5}{
\draw[dotted,thick] (0,\y) -- (6,\y);
\node[left] at (0,\y) {\tiny $\y$};
}
\draw[fill opacity=0.5,fill=white!90!black] (0,5) -- (2,5) -- (2,4) -- (4,4) -- (4,2) -- (5,2) -- (5,1) -- (6,1) -- (6,0) -- (6,5) -- cycle;
\draw[ultra thick,<-] (6,0) -- (6,1);
\draw[ultra thick,gray,<-] (6,1) -- (5,1);
\draw[ultra thick,<-] (5,1) -- (5,2);
\draw[ultra thick,gray,<-] (5,2) -- (4,2);
\draw[ultra thick,<-] (4,2) -- (4,3);
\draw[ultra thick,<-] (4,3) -- (4,4);
\draw[ultra thick,gray,<-] (4,4) -- (3,4);
\draw[ultra thick,gray,<-] (3,4) -- (2,4);
\draw[ultra thick,<-] (2,4) -- (2,5);
\draw[ultra thick,gray,<-] (2,5) -- (1,5);
\draw[ultra thick,gray,<-] (1,5) -- (0,5);
\node[text centered] at (0.2,-0.5) {\tiny $x$};
\node[text centered] at (-0.5,0.2) {\tiny $y$};
\end{tikzpicture}
\quad
&
\quad
\begin{tikzpicture}[scale=0.8,>=stealth,baseline=(current bounding box.center)]
\foreach\x in {1,...,6}{
\node[below] at (\x,0) {\tiny $\x$};
}
\draw[dotted,thick] (6,0) -- (6,1);
\draw[dotted,thick] (5,0) -- (5,2);
\draw[dotted,thick] (4,0) -- (4,4);
\draw[dotted,thick] (3,0) -- (3,4);
\draw[dotted,thick] (2,0) -- (2,5);
\draw[dotted,thick] (1,0) -- (1,5);
\foreach\y in {1,...,5}{
\node[left] at (0,\y) {\tiny $\y$};
\draw[ultra thick,->] (0,\y) -- (1,\y);
}
\draw[dotted,thick] (0,5) -- (2,5);
\draw[dotted,thick] (0,4) -- (4,4);
\draw[dotted,thick] (0,3) -- (4,3);
\draw[dotted,thick] (0,2) -- (5,2);
\draw[dotted,thick] (0,1) -- (6,1);
\node[text centered] at (0.2,-0.5) {\tiny $x$};
\node[text centered] at (-0.5,0.2) {\tiny $y$};
\draw[thick,gray,->] (6,1) -- (6.9,1);
\draw[thick,gray,->] (6,1) -- (6,1.9);
\draw[thick,gray,->] (5,2) -- (5.9,2);
\draw[thick,gray,->] (5,2) -- (5,2.9);
\draw[thick,gray,->] (4,3) -- (4.9,3);
\draw[thick,gray,->] (4,4) -- (4.9,4);
\draw[thick,gray,->] (4,4) -- (4,4.9);
\draw[thick,gray,->] (3,4) -- (3,4.9);
\draw[thick,gray,->] (2,5) -- (2.9,5);
\draw[thick,gray,->] (2,5) -- (2,5.9);
\draw[thick,gray,->] (1,5) -- (1,5.9);
\draw[thick,rounded corners,<-] (7.5,1) -- (7.5,6.5) -- (1,6.5);
\node at (6.5,5.5) {$T$};
\end{tikzpicture}
\end{tabular}
\caption{Left panel: the Young diagram $\mu = (4,3,1,1)$ used to cut a jagged boundary. Right panel: the resulting jagged sub-domain of the $M \times N$ lattice. Outgoing edges shown in grey.}
\label{fig:jagged}
\end{figure}
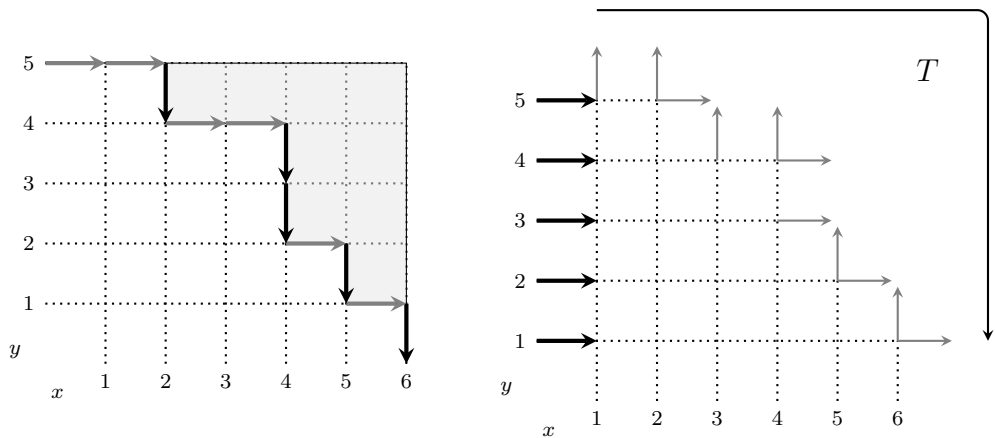
We call such domains {\it jagged}. A jagged domain is specified by {\bf 1.} The two positive integers $M$ and $N$; {\bf 2.} A binary string $S = (S(1),\dots,S(M+N))$ consisting of $M$ pluses and $N$ minuses, that determines the jagged edge. Starting at the point $(0,N)$, one begins tracing a path consisting of $M$ right steps and $N$ down steps, where the $i$-th step is right if $S(i) = +$ and down if $S(i) = -$. The final coordinate of this path is necessarily $(M,0)$. By convention, we always take the first step to be right and the final step to be down, so that $S(1)=+$ and $S(M+N)=-$. Deleting the part of the lattice lying above this path, we obtain a jagged domain. As illustrated in the left panel of Figure \ref{fig:jagged}, the excised region is nothing but the Young diagram of the partition $\mu(S)$, rotated clockwise by 90 degrees. The domains that we considered so far, which were rectangular, corresponded with $S=(+,\dots,+,-,\dots,-)$, for which $\mu(S) = \varnothing$.

A jagged sub-domain of the $M \times N$ lattice continues to possess $M+N$ outgoing edges; \ie external vertex edges that protrude upward or rightward. As before, exactly $N$ of these will be occupied by up-right paths, and numbering the edges in clockwise fashion (as shown in the right panel of Figure \ref{fig:jagged}) we can again obtain a binary string $T$ and corresponding partition $\nu(T)$ by following the same conventions as in Section \ref{sec:edge-rect}. Extending the notation of that section, we write $\mathcal{O}(\sigma) = \nu/\mu$ to indicate that $\sigma$ is a state on the jagged domain encoded by $\mu$, with outgoing edges that encode the partition $\nu$.

We are then interested in the probability distribution of this random variable:
\begin{align}
\label{eq:distrib-6v}
{\rm Prob}^{\rm 6V}_{M,N}
(\mathcal{O}(\sigma) = \nu/\mu)
:=
\sum_{\sigma}
{\rm Prob}^{\rm 6V}_{M,N} (\sigma)
\bm{1}_{\mathcal{O}(\sigma) = \nu/\mu}.
\end{align}

\section{Matching distributions}

\subsection{Matching $q$-moments of partition lengths and height functions}
\label{sc:moments-match}

The Hall--Littlewood process considered in Section \ref{sec:HL-process-usual} depends on $t$ and parameters $\{a_i \}_{i \ge 1}$, $\{ b_j \}_{j \ge 1}$. The stochastic six vertex model from Section \ref{sec:6v-defn} depends on $Q$, $\{\xi_x \}_{x \ge 1}$, $ \{ u_y \}_{y \ge 1}$. We want to show that these stochastic processes are closely related when the parameters satisfy
\begin{equation}
\label{eq:param-match}
t=Q, \qquad a_x= \frac{\sqrt{t}}{\xi_x}, \ \ \mbox{for all $x$}, \qquad b_y = \frac{1}{t u_y}, \ \ \mbox{for all $y$.}
\end{equation}

\begin{theorem}
\label{th:match-param}
Assume that the parameters $(t, \{a_i\}, \{ b_j \} )$ and $( Q, \{\xi_i \}, \{ u_j \} )$ satisfy the conditions \eqref{eq:param-match}. For any positive integers $N,k$, and $m_1, \dots, m_k$ the vector $(N-\lambda'_1 (m_1,N), \dots, N - \lambda'_1 (m_k,N))$ has the same distribution as $(h(m_1+1,N), \dots, h(m_k+1,N))$.
\end{theorem}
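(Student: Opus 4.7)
The plan is to match joint $Q$-exponential moments. Propositions~\ref{prop:HL-mom} and~\ref{prop:VERTmom} provide explicit $k$-fold contour-integral formulas for
\begin{equation*}
\mathbf E\prod_{i=1}^k t^{m_i - \lambda'_1(m_i,N)}
\qquad\text{and}\qquad
\mathbf E\prod_{i=1}^k Q^{h(m_i+1,N)}
\end{equation*}
valid for any $k\ge 1$ and $m_1\ge\cdots\ge m_k\ge 1$. I would show that under the dictionary~\eqref{eq:param-match} the change of variables $z_l = 1/w_l$ converts the first integral into an explicit scalar multiple of the second, and then argue that the resulting match of all joint $Q$-moments implies equality of joint distributions.

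The heart of the matter is a factor-by-factor comparison of the two integrands. Under $z_l = 1/w_l$, the Cauchy piece $(z_i-z_j)/(tz_i-z_j)$ becomes $(w_i-w_j)/(w_i-tw_j)$, exactly matching its $6$V counterpart. Using $b_j = 1/(tu_j)$, each factor $(1-z_lb_j)/(1-tz_lb_j)$ becomes $t^{-1}(1-tw_lu_j)/(1-w_lu_j)$, contributing an overall prefactor $t^{-kN}$. Using $a_i = \sqrt t/\xi_i$, each factor $(tz_l-a_i)/(z_l-a_i)$ becomes $t(\xi_i-w_l/\sqrt t)/(\xi_i-\sqrt t w_l)$, contributing $t^{\sum_l m_l}$. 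The form $dz_l/z_l$ becomes $-dw_l/w_l$, yielding $(-1)^k$, while reversing the orientation of the image contours (which are traced clockwise after $z\mapsto 1/w$) supplies another $(-1)^k$ that cancels it. Under the map $z\mapsto 1/w$, the contours $\gamma_l$ of Definition~\ref{def:contours-HL} go to contours $\delta_l$ of Definition~\ref{def:VERTmom}, with the nesting condition on $t\gamma_r$ turning into the nesting condition on $Q\delta_s$ (see Figure~\ref{fig:contoursHLand6vert}). Taking $t=Q$ and combining prefactors yields
\begin{equation*}
\mathbf E\prod_{i=1}^k Q^{m_i - \lambda'_1(m_i,N)}
=
Q^{\sum_i m_i - kN}\,\mathbf E\prod_{i=1}^k Q^{h(m_i+1,N)},
\end{equation*}
which rearranges to $\mathbf E\prod_i Q^{N-\lambda'_1(m_i,N)} = \mathbf E\prod_i Q^{h(m_i+1,N)}$.

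To finish, I observe that both $N-\lambda'_1(m,N)$ and $h(m+1,N)$ take values in $\{0,1,\dots,N\}$: the former because $Q_{\lambda^{(M)}}(b_1,\dots,b_N)$ vanishes unless $\ell(\lambda^{(M)})\le N$, so $\lambda'_1(m,N)\le N$; the latter by definition. Allowing repetition among the $m_i$'s in the identity above gives $\mathbf E\prod_j Q^{n_j Z_j}$ for any positive-integer multiplicities $(n_j)$, where $Z_j$ denotes either $N-\lambda'_1(x_j,N)$ or $h(x_j+1,N)$. Since the random vector $(Q^{Z_j})_j$ takes values in the finite set $\{1,Q,\dots,Q^N\}^K$, its full polynomial moment sequence uniquely determines its joint distribution, hence that of $(Z_j)_j$. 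The main technical obstacle is careful scalar bookkeeping across the substitution; the conceptual content is that Definitions~\ref{def:contours-HL} and~\ref{def:VERTmom} describe families of contours in bijective correspondence under $z\mapsto 1/w$, as depicted in Figure~\ref{fig:contoursHLand6vert}.
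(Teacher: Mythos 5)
Your overall strategy is the one the paper uses (match the joint $Q$-moments of Propositions~\ref{prop:HL-mom} and~\ref{prop:VERTmom} via $z_l=1/w_l$, then use boundedness to pass from moments to distributions), and your factor-by-factor bookkeeping, the prefactor $t^{-kN}\cdot t^{\sum_l m_l}$, and the final moment-determinacy step all agree with Lemma~\ref{lem:match-param} and the proof of Theorem~\ref{th:match-param}. However, there is a genuine gap in the step where you assert that ``the contours $\gamma_l$ of Definition~\ref{def:contours-HL} go to contours $\delta_l$ of Definition~\ref{def:VERTmom}'' under $z\mapsto 1/w$. This is clear only for nested circles centered at the origin, as in Figure~\ref{fig:contoursHLand6vert}, and such circles exist only under a genuine extra restriction on the parameters: to fit $k$ contours, each encircling all of $0,a_1,\dots,a_{m_r}$, avoiding the points $1/(t b_j)$, and successively separated by a factor of $t$, one needs (roughly) $a_x b_y < t^{k-1}$, which is strictly stronger than \eqref{eq:cond-HL} once $k$ is large and $a_xb_y$ is close to $1$. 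For more general suitable families (say, unions of small circles around $0$ and the $a_i$'s), the image under inversion is not obviously suitable in the sense of Definition~\ref{def:VERTmom}: the conditions comparing $t\gamma_r$ with the interior of $\gamma_s$, respectively $Q\delta_s$ with the interior of $\delta_r$, do not transform into each other in any simple way for non-concentric curves, so the identity \eqref{eq:HLmom} $=$ \eqref{eq:VERTmom} (up to the explicit prefactor) is not yet established for all admissible parameters.

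The paper closes exactly this gap with a two-step argument that your proposal omits. First it imposes the restriction $a_xb_y< t^{k-1}$, takes explicit circles $\gamma_r=\{|z|=\tilde{t}^{\,r-1}B\}$ with $\tilde{t}<t$ and suitable $A<\tilde{t}^{\,k}B$, and runs precisely your change-of-variables computation. Then it removes the restriction by analytic continuation in the parameters: for fixed $k$ and $m_1,\dots,m_k$, the Hall--Littlewood side of \eqref{eq:main-match-eq} is the sum of an absolutely convergent series in $(a_1,\dots,a_{m_1},b_1,\dots,b_N)$ near the origin, while the six-vertex side is a rational function of these variables, so agreement on an open set of parameters forces agreement on the whole region \eqref{eq:cond-HL}. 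Your proof needs either this continuation argument or a direct construction of corresponding suitable contour families for all admissible parameters; as written, it only proves the moment identity (and hence the theorem) on the restricted parameter range where the concentric picture of Figure~\ref{fig:contoursHLand6vert} is available. The remainder of your argument, including the observation that $N-\lambda'_1(m,N)$ and $h(m+1,N)$ take values in $\{0,\dots,N\}$ so that joint moments of $(Q^{Z_j})_j$ determine the joint law, is correct and matches the paper's conclusion.
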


In this section we establish this theorem with the use of moments.

\begin{lemma}
\label{lem:match-param}
Assume that the parameters $(t, \{a_i\}, \{ b_j \} )$ and $( Q, \{\xi_i \}, \{ u_j \} )$ satisfy the conditions \eqref{eq:param-match}. Then, for $m_1 \ge m_2 \ge \dots \ge m_k \ge 1$, we have
\begin{equation}
\label{eq:main-match-eq}
\mathbf E \left( t^{(N - \lambda'_1 (m_1,N)) + (N - \lambda'_1 (m_2,N)) + \dots + (N - \lambda'_1 (m_k,N)) } \right) = \mathbf E \left( Q^{h(m_1+1,N) + h(m_2+1,N) +\dots + h(m_k+1,N)} \right).
\end{equation}
\end{lemma}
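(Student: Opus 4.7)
The plan is to reduce both sides of \eqref{eq:main-match-eq} to $k$-fold contour integrals via Propositions \ref{prop:HL-mom} and \ref{prop:VERTmom}, and then identify them through the change of variables $z_l = 1/w_l$ under the parameter matching \eqref{eq:param-match}. First, decompose $N - \lambda'_1(m_l,N) = (N-m_l) + (m_l - \lambda'_1(m_l,N))$, so the LHS becomes $t^{kN - m_1 - \cdots - m_k}$ times the expectation for which Proposition \ref{prop:HL-mom} provides an explicit contour integral. The RHS is already in the form of \eqref{eq:VERTmom}.

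Next, I would substitute $z_l = 1/w_l$ in the HL integral and track the resulting factors under $(t,a_i,b_j) = (Q,\sqrt{Q}/\xi_i,1/(Qu_j))$. The differential transforms as $\frac{dz_l}{z_l} = -\frac{dw_l}{w_l}$, with the sign absorbed by reversing orientation. The antisymmetric prefactor transforms cleanly as $\prod_{i<j}\frac{z_i-z_j}{tz_i-z_j} \mapsto \prod_{i<j}\frac{w_i-w_j}{w_i-Qw_j}$ with no extra scalar. The $b$-product yields $\prod_{l,j}\frac{1-z_lb_j}{1-tz_lb_j} = Q^{-Nk}\prod_{l,j}\frac{1-Qw_lu_j}{1-w_lu_j}$, and the $a$-product yields $\prod_{l}\prod_{i=1}^{m_l}\frac{tz_l-a_i}{z_l-a_i} = Q^{\sum m_l}\prod_l\prod_{i=1}^{m_l}\frac{\xi_i - w_l/\sqrt{Q}}{\xi_i - \sqrt{Q}w_l}$. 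The accumulated prefactor $t^{kN-\sum m_l}\cdot Q^{-Nk}\cdot Q^{\sum m_l} = 1$ cancels exactly, leaving precisely the integrand of \eqref{eq:VERTmom}, including the correct $Q^{k(k-1)/2}$ normalization.

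It then remains to check that the HL contour conditions of Definition \ref{def:contours-HL} transform into the VERTmom conditions of Definition \ref{def:VERTmom}, as asserted in the caption of Figure \ref{fig:contoursHLand6vert}. Under $z = 1/w$, points encircled by $\gamma_r$, namely $\{0, a_i = \sqrt{Q}/\xi_i\}$, correspond to points not encircled by $\delta_r$, namely $\{\xi_i/\sqrt{Q}\}$ (which are precisely the poles of the $w$-plane denominator $\xi_i - \sqrt{Q}w_l$); conversely, points not encircled by $\gamma_r$, namely $\{1/(tb_j) = u_j\}$, correspond to points encircled by $\delta_r$, namely $\{1/u_j\}$. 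For the nesting, the HL condition requiring the radius of $\gamma_s$ to be less than $t$ times that of $\gamma_r$ for $r<s$ translates, upon inversion, into the requirement that the radius of $Q\delta_s$ exceed that of $\delta_r$, which is exactly the VERTmom nesting condition.

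The main obstacle is the careful bookkeeping: tracking the scalar prefactors term by term through the change of variables, and verifying that the orientation reversal together with inversion indeed transports the HL contour specification onto the VERTmom one. Once these checks are in place, the equality of the two contour integrals is immediate, and \eqref{eq:main-match-eq} follows.
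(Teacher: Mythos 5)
Your contour-integral computation is exactly the paper's: the decomposition $N-\lambda'_1(m_l,N)=(N-m_l)+(m_l-\lambda'_1(m_l,N))$, the substitution $z_l=1/w_l$, the factor-by-factor identities for the $a$-, $b$- and cross-products, and the cancellation $t^{kN-\sum m_l}\,t^{-Nk}\,t^{\sum m_l}=1$ all check out and coincide with the paper's manipulations (your identification of the excluded points as the poles $w_l=\xi_i/\sqrt{Q}$ also agrees with the paper's proof text). However, there is a genuine gap: you implicitly assume that suitable contours exist, and correspond under inversion, for \emph{all} parameters allowed by \eqref{eq:cond-HL}/\eqref{eq:cond-6vert}. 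Your own nesting argument ("radius of $\gamma_s$ less than $t$ times that of $\gamma_r$") presupposes circles centered at the origin, and such nested circles — innermost still encircling all $a_i$, outermost still excluding all $1/(t b_j)$, with consecutive radii shrinking by a factor $t$ — exist only when the parameters satisfy roughly $a_x b_y < t^{k-1}$, which for $k\ge 2$ is a strictly smaller region than the full range $a_x b_y<1$ of the lemma. (One cannot simply pass to more exotic "suitable" contours either: the clean inside/outside exchange under $z\mapsto 1/w$, which you use to transport Definition \ref{def:contours-HL} onto Definition \ref{def:VERTmom}, is only valid for curves winding once around the origin, e.g.\ circles centered at $0$.)

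So as written your argument proves \eqref{eq:main-match-eq} only on a sub-region of the parameter space, and a further step is needed to cover the whole range. The paper supplies it: first establish the identity under the extra restriction $a_x b_y<t^{k-1}$ (where the nested circles $\gamma_r=\{|z|=\tilde t^{\,r-1}B\}$ are suitable), and then remove the restriction by analytic continuation in $(a_1,\dots,a_{m_1},b_1,\dots,b_N)$ — the left-hand side of \eqref{eq:main-match-eq} is the sum of an absolutely convergent series in a neighborhood of the origin, the right-hand side is a rational function, and agreement on an open set forces agreement on the whole admissible domain. You should add this restriction-plus-continuation step (or an equivalent argument) to close the proof.
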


\begin{proof}

First, note that the conditions \eqref{eq:cond-HL} and \eqref{eq:cond-6vert} are just the same with the matching \eqref{eq:param-match}. We start by proving the lemma with further restrictions on parameters.
Namely, let us assume that the parameters $a_1, \dots, a_{m_1}$ are less than a certain real number $A$, the parameters $b_1, \dots, b_N$ are such that the points $\{1 / (t b_j) \}_{j=1}^N$ are greater than a certain real number $B$, and that $A < \tilde t^k B$, for some $\tilde t < t$. It is easy to see that such $A$ and $B$ exist if we have $a_x b_y < t^{k-1}$ for all $x$ and $y$. Then the contours $\gamma_r := \{ z: |z|=\tilde t^{r-1} B \}$, $1 \le r \le k$, are suitable for $(t, \{a_i\}, \{ b_j \} )$ in the sense of Definition \ref{def:contours-HL}.

From the relations between the parameters, we have
$$
\frac{1 - b_y / w }{1 - t b_y/w} = t^{-1} \frac{1 - t u_y w}{1 - u_y w}, \qquad
\frac{t / w -a_x}{1/w - a_x} = t \frac{\xi_x - w / \sqrt{t}}{\xi_x - \sqrt{t} w},
$$
$$
\prod_{1 \le i<j \le k} \frac{1 / w_i - 1 / w_j}{t / w_i - 1 / w_j} = \prod_{1 \le i<j \le k} \frac{w_i - w_j}{w_i - t w_j}.
$$
Let us make a change of variables $w_i := 1 / z_i$, $i=1, \dots, k$, in the right hand side of \eqref{eq:HLmom}. Note that after such a change the image of the contour $\gamma_r$ will encircle $0$ and $t b_y = u_y^{-1}$, $y=1, \dots, N$, but not $ 1 / a_x = \xi_x / \sqrt{t}$, $x=1, \dots, m_r$, which exactly coincides with the condition on contours from Definition \ref{def:VERTmom}. Also, after this the contours are embedded into each other exactly as in Definition \ref{def:VERTmom} (see Figure \ref{fig:contoursHLand6vert}).

Combining all these facts and applying Proposition \ref{prop:VERTmom}, we have
$$
\mathbf E \left( t^{(m_1 - \lambda'_1 (m_1,N)) + (m_2 - \lambda'_1 (m_2,N)) + \dots + (m_k - \lambda'_1 (m_k,N)) } \right) = t^{m_1+\dots +m_k - kN} \mathbf E \prod_{i=1}^k t^{h (m_i+1,N)}.
$$
This implies the proposition \eqref{eq:main-match-eq} for the set of parameters with $a_x b_y < t^{k-1}$, $x=1, \dots, m_1$, $y=1, \dots,N$.

Let us prove the lemma for all parameters $\{ a_x \}, \{b_y\}$ satisfying \eqref{eq:cond-HL}. Note that in variables $a_1, \dots, a_{m_1}$, $b_1, \dots, b_N$ the left hand side of \eqref{eq:main-match-eq} is the sum of an absolutely convergent series in a neighborhood of $0^{m_1+N}$, while the right hand side of \eqref{eq:main-match-eq} is a rational function. Since these functions coincide on an open set in $\mathbb R^{m_1+n}$, they must coincide everywhere.

\end{proof}

\begin{proof}[Proof of Theorem \ref{th:match-param}]

Lemma \ref{lem:match-param} implies that the random vectors $(t^{N-\lambda'_1 (m_1,N)}, \dots, t^{N - \lambda'_1 (m_k,N)})$ and $(t^{h(m_1+1,N)}, \dots, t^{h(m_k+1,N)})$ have the same joint moments. Since these random vectors take values in $[0,1]^k$, we conclude that $(t^{N-\lambda'_1 (m_1,N)}, \dots, t^{N - \lambda'_1 (m_k,N)})$ and $(t^{h(m_1+1,N)}, \dots, t^{h(m_k+1,N)})$ have the same distribution. Taking logarithms, we arrive at the statement of the theorem.
\end{proof}

\subsection{A more general result on matching distributions}
\label{sec:more-gen-th}

For positive integers $M,N$ let $S \in \mathcal{S}_{M,N}^{+}$ be a string of signs as defined in Section \ref{sec:general-HL}. Consider a collection of points $((x_0 (S), y_0(S)), \dots, (x_{M+N} (S),y_{M+N} (S)))$ in $\mathbb Z_{\ge 0}^2$ such that:
\begin{enumerate}[{\bf 1.}]
\item $(x_0 (S) ,y_0 (S)) = (0,N)$, $(x_{M+N} (S), y_{M+N} (S) ) = (M,0)$.
\item If $S(i)=+$, then $x_{i} (S) = x_{i-1} (S) +1$, $y_{i} (S) = y_{i-1} (S)$. If $S(i)=-$, then $x_{i} (S)=x_{i-1} (S)$, $y_{i} (S)=y_{i-1} (S)-1$.
\end{enumerate}
It is clear that $S$ uniquely determines such a collection. These points are just the positions of the vertices visited, when cutting a jagged edge formed by the partition $\mu(S)$; see Section \ref{sec:jagged}.

\begin{theorem}
\label{th:equiv-distrib}
For positive integers $M,N$ and a string $S \in \mathcal{S}_{M,N}^{+}$, the random vectors $\{ \lambda'_1 (i,N,S) \}_{i=1}^{M+N-1}$ defined in Section \ref{sec:general-HL} and $\{ y_i(S) - h (x_i (S)+1,y_i(S)) \}_{i=1}^{M+N-1}$ are identically distributed.
\end{theorem}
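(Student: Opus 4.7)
The first move is to recast the theorem as a single distributional identity on skew Young diagrams. With $S\in\mathcal{S}^{+}_{M,N}$ fixed, the vector $\{\lambda'_1(i,N,S)\}_{i=1}^{M+N-1}$ is in bijection with the outer binary string $T\in\mathcal{S}^{-}_{M,N}$ via the recipe of Section \ref{sec:support}, encoding the outer partition $\nu(T)$. The 6V observable $\{y_i(S)-h(x_i(S)+1,y_i(S))\}_{i=1}^{M+N-1}$ likewise records the cumulative number of occupied outgoing edges along the jagged boundary $\mu(S)$, so it also determines $T$ and $\nu(T)$. Under \eqref{eq:param-match}, Theorem \ref{th:equiv-distrib} is therefore equivalent to the identity
\begin{equation*}
{\rm Prob}^{S}_{M,N}\bigl([\lambda^{(1)}*\cdots*\lambda^{(M+N-1)}]=\nu/\mu(S)\bigr) = {\rm Prob}^{\rm 6V}_{M,N}\bigl(\mathcal{O}(\sigma)=\nu/\mu(S)\bigr)
\end{equation*}
of probability distributions on skew diagrams.

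I would then prove this by induction on the inversion number $\mathrm{inv}(S):=\#\{(i,j):i<j,\ S(i)=-,\ S(j)=+\}$. The base case $\mathrm{inv}(S)=0$ forces $S=(+,\ldots,+,-,\ldots,-)$, for which $\mu(S)=\varnothing$, the jagged domain is the $M\times N$ rectangle, the Hall--Littlewood process is the ascending one of Section \ref{sec:HL-process-usual}, and the identity is the support-level repackaging of Theorem \ref{th:match-param}. For the inductive step, I fix an adjacent pair $(S(i),S(i+1))=(-,+)$ and let $S'$ be obtained by swapping to $(+,-)$, so $\mathrm{inv}(S')=\mathrm{inv}(S)-1$ and the jagged boundaries of $S$ and $S'$ differ by a single lattice vertex $v$ at the swapped corner. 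The crucial point is that, on each side, the two measures admit a natural coupling whose conditional law on the local edge-pair at $v$ is a common $4\times 4$ stochastic kernel: on the 6V side this is simply the list of stochastic vertex weights $\mathrm{Prob}(\nul),\mathrm{Prob}(\all),\mathrm{Prob}(\rr),\mathrm{Prob}(\ru),\mathrm{Prob}(\uu),\mathrm{Prob}(\ur)$ of Section \ref{sec:6v-defn} at $v$ (inherited from the Markov measure on the enclosing rectangle), while on the Hall--Littlewood side the skew Cauchy identity for Hall--Littlewood symmetric functions provides the local resampling of $\lambda^{(i)}$ that interchanges an adjacent $P_{\lambda^{(i)}/\lambda^{(i-1)}}(a_{\bullet})$ and $Q_{\lambda^{(i+1)}/\lambda^{(i)}}(b_{\bullet})$ in \eqref{eq:HL-proc-def-2/5}, projected onto first-column data.

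The heart of the proof --- and the main obstacle --- is verifying that these two $4\times 4$ kernels coincide under the parameter matching \eqref{eq:param-match}. This is essentially a rank-one instance of the $t$-boson Yang--Baxter equation of \cite{B},\cite{WZ} (see also \cite{BP1},\cite{BP-hom}), and its verification should come down to an entry-by-entry substitution of the six-vertex weights into the first-column projection of the skew Cauchy resampling, using that the corresponding row-to-row transfer matrices for the 6V model and the Pieri operators for $P_\lambda$, $Q_\lambda$ become identified (up to a diagonal gauge) under \eqref{eq:param-match}. Once this single-swap identification is in hand, the common coupling together with the inductive hypothesis at $S'$ deliver the identity at $S$, and iterating the swap $\mathrm{inv}(S)$ times transports the base-case identity to arbitrary $S$, closing the induction.
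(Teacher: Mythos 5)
Your opening reduction is exactly the paper's: the two vectors are equivalent data to the outer string $T$, so the theorem is the statement that the distribution \eqref{eq:distrib-hl} of the support $\nu(T)/\mu(S)$ matches the edge distribution \eqref{eq:distrib-6v}, and the key computational fact is indeed a $t$-boson Yang--Baxter relation. However, your induction has a genuine gap at the base case. Theorem \ref{th:match-param} matches joint distributions only at points $(m_i+1,N)$ lying on a single horizontal line; for $S=(+,\dots,+,-,\dots,-)$ it therefore controls only the first $M$ coordinates of the two vectors (the paper states this explicitly right after Theorem \ref{th:equiv-distrib}). The identity you need at $\mathrm{inv}(S)=0$ is Theorem \ref{thm:min}: the full support of the rectangle also encodes the descending tail $\lambda^{(M)}\supset\cdots\supset\lambda^{(M+N-1)}$ on the Hall--Littlewood side and the height function along the right edge of the rectangle on the six-vertex side, i.e.\ a joint law along an L-shaped path. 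No moment formula for such non-collinear joint distributions is available --- this is precisely the obstruction the paper points out --- and the paper proves the rectangle case by the same lattice/Yang--Baxter computation, not as a repackaging of Theorem \ref{th:match-param}. So your base case already requires the full machinery you were hoping to defer to the inductive step.

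The inductive step also has a direction problem. What the Yang--Baxter exchange \eqref{graph-exchange} (equivalently, the skew Cauchy identity refined by first-column increments) actually gives is a \emph{forward} relation: the law of $T$ under $\mathrm{Prob}^{S'}_{M,N}$ is the pushforward of the law under $\mathrm{Prob}^{S}_{M,N}$ through the stochastic vertex kernel \eqref{eq:six-vertex-weights} at the adjoined vertex $v$ --- the same relation that holds tautologically on the six-vertex side when $v$ is added to the jagged domain. You assume the identity at $S'$ (fewer inversions, larger domain) and want it at $S$; but equality of the two pushforwards does not imply equality of the original laws unless the kernel is injective, and its mixed $2\times 2$ block has determinant proportional to $t(1-a b)^2-(1-t)^2ab$, which vanishes at admissible parameters (e.g.\ $ab=t$), so you would need a genericity-plus-rationality continuation argument that the proposal never mentions. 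There is no local ``backward'' kernel either: the conditional law of the $S$-data given the $S'$-data depends on the entire configuration, not just on the edges at $v$. The paper sidesteps both issues by proving the identity for every $S$ directly: writing \eqref{eq:distrib-hl} as the $t$-boson expectation value \eqref{eq:expectation-skew} and commuting all $b$-rows above the $a$-rows, each commutation emitting one stochastic six-vertex vertex, until the frozen semi-infinite lattice can be deleted and what remains is exactly the six-vertex partition function on the jagged domain cut out by $\mu(S)$ --- no base case and no inversion of kernels is required.
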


If the string $S$ equals $(+,\dots, +, -, \dots, -)$, this statement for the first $M$ coordinates of the random vectors is equivalent to Theorem \ref{th:match-param}. We will defer the proof of this statement to the next section, where we prove an equivalent statement, that the distributions \eqref{eq:distrib-hl} and \eqref{eq:distrib-6v} match.

\section{$t$-Bosons and the proof of Theorem \ref{th:equiv-distrib}}

\subsection{Weights of the six vertex model after parameter-matching \eqref{eq:param-match}}

Let us begin by recalling the weights assigned to the six vertices in Figure \ref{fig:6types}, where for convenience we impose the matching \eqref{eq:param-match}. The probability weights become
\begin{align}
\mathrm{Prob} (\nul) = \mathrm{Prob} (\all) = 1,
\qquad
\begin{split}
& \mathrm{Prob} (\rr) = \frac{1-a_x b_y}{1- t a_x b_y},
\qquad
\mathrm{Prob} (\ru) = \frac{(1-t) a_x b_y}{1-t a_x b_y},
\\
& \mathrm{Prob} (\uu) = \frac{t(1-a_x b_y)}{1-t a_x b_y},
\qquad
\mathrm{Prob} (\ur) =
\frac{1-t}{1-t a_x b_y}.
\end{split}
\label{eq:six-vertex-weights}
\end{align}
At all times in this section, we will assume that the Boltzmann weights of the six vertex model are given by \eqref{eq:six-vertex-weights}.

\subsection{An integrable model of $t$-bosons}

We now introduce a further integrable vertex model, living on the lattice $\mathbb{Z}^2$. Horizontal edges of the lattice can be occupied by at most one lattice path, but no restriction is imposed on the number of paths that traverse a vertical edge. Every intersection of horizontal and vertical gridlines constitutes a vertex, and each vertex is assigned a Boltzmann weight that depends on the local configuration of lattice paths about that intersection. Assuming conservation of lattice paths through a vertex, four types of vertices are possible. We indicate these vertices and their explicit weights below:
\begin{align}
\label{eq:black-vertices}
\begin{array}{cccc}
\begin{tikzpicture}[scale=0.8,>=stealth]
\draw[lgray,ultra thick] (-1,0) -- (1,0);
\draw[lgray,line width=10pt] (0,-1) -- (0,1);
\node[below] at (0,-1) {$m$};
\draw[ultra thick,->,rounded corners] (-0.075,-1) -- (-0.075,1);
\draw[ultra thick,->,rounded corners] (0.075,-1) -- (0.075,1);
\node[above] at (0,1) {$m$};
\end{tikzpicture}
\quad\quad\quad
&
\begin{tikzpicture}[scale=0.8,>=stealth]
\draw[lgray,ultra thick] (-1,0) -- (1,0);
\draw[lgray,line width=10pt] (0,-1) -- (0,1);
\node[below] at (0,-1) {$m$};
\draw[ultra thick,->,rounded corners] (-0.075,-1) -- (-0.075,1);
\draw[ultra thick,->,rounded corners] (0.075,-1) -- (0.075,0) -- (1,0);
\node[above] at (0,1) {$m-1$};
\end{tikzpicture}
\quad\quad\quad
&
\begin{tikzpicture}[scale=0.8,>=stealth]
\draw[lgray,ultra thick] (-1,0) -- (1,0);
\draw[lgray,line width=10pt] (0,-1) -- (0,1);
\node[below] at (0,-1) {$m$};
\draw[ultra thick,->,rounded corners] (-1,0) -- (-0.15,0) -- (-0.15,1);
\draw[ultra thick,->,rounded corners] (0,-1) -- (0,1);
\draw[ultra thick,->,rounded corners] (0.15,-1) -- (0.15,1);
\node[above] at (0,1) {$m+1$};
\end{tikzpicture}
\quad\quad\quad
&
\begin{tikzpicture}[scale=0.8,>=stealth]
\draw[lgray,ultra thick] (-1,0) -- (1,0);
\draw[lgray,line width=10pt] (0,-1) -- (0,1);
\node[below] at (0,-1) {$m$};
\draw[ultra thick,->,rounded corners] (-1,0) -- (-0.15,0) -- (-0.15,1);
\draw[ultra thick,->,rounded corners] (0,-1) -- (0,1);
\draw[ultra thick,->,rounded corners] (0.15,-1) -- (0.15,0) -- (1,0);
\node[above] at (0,1) {$m$};
\end{tikzpicture}
\\
1
\quad\quad\quad
&
a
\quad\quad\quad
&
(1-t^{m+1})
\quad\quad\quad
&
a
\end{array}
\end{align}
where $a$ is a local (spectral) parameter associated to the vertex, and $t$ is a global parameter of the model. This is the lattice realization of the $t$-boson model (see, for example, \cite{BB,BIK}), and its connection with Hall--Littlewood polynomials is well known \cite{T,K,WZ}. It can also be recovered from the higher-spin vertex model of \cite{B,BP1}, in the limit of infinite spin (in the notation of \cite{B,BP1}, this is the limit $s \rightarrow 0$).

\begin{theorem}
\label{thm:RLL}
For any fixed $0 \leq i_1,i_2,j_1,j_2 \leq 1$ and $m,n \in \mathbb{Z}_{\geq 0}$, the Yang--Baxter equation holds:
\begin{align}
\label{eq:RLL}
\sum_{0 \leq k_1,k_2 \leq 1}\
\sum_{p=0}^{\infty}\ \ \
\begin{tikzpicture}[baseline=(current bounding box.center),scale=0.8]
\draw[dotted,thick] (-2,1) node[left] {$i_1$} -- (-1,0) node[below] {$k_1$};
\draw[dotted,thick] (-2,0) node[left] {$i_2$} -- (-1,1) node[above] {$k_2$};
\draw[lgray,ultra thick] (-1,1) -- (1,1) node[right,black] {$j_2$};
\draw[lgray,ultra thick] (-1,0) -- (1,0) node[right,black] {$j_1$};
\draw[lgray,line width=10pt] (0,-1) -- (0,2);
\node[below] at (0,-1) {$m$};
\node at (0,0.5) {$p$};
\node[above] at (0,2) {$n$};
\draw[thin,dashed,->] (0,0) -- (1,-1) node[right] {$b^{-1}$};
\draw[thin,dashed,->] (0,1) -- (1,2) node[right] {$a$};
\end{tikzpicture}
\quad
=
\quad
\sum_{0 \leq k_1,k_2 \leq 1}\
\sum_{p=0}^{\infty}\ \ \
\begin{tikzpicture}[baseline=(current bounding box.center),scale=0.8]
\draw[dotted,thick] (1,1) node[above] {$k_1$} -- (2,0) node[right] {$j_1$};
\draw[dotted,thick] (1,0) node[below] {$k_2$} -- (2,1) node[right] {$j_2$};
\draw[lgray,ultra thick] (-1,1) node[left,black] {$i_1$} -- (1,1);
\draw[lgray,ultra thick] (-1,0) node[left,black] {$i_2$} -- (1,0);
\draw[lgray,line width=10pt] (0,-1) -- (0,2);
\node[below] at (0,-1) {$m$};
\node at (0,0.5) {$p$};
\node[above] at (0,2) {$n$};
\draw[thin,dashed,->] (0,0) -- (-1,-1) node[left] {$a$};
\draw[thin,dashed,->] (0,1) -- (-1,2) node[left] {$b^{-1}$};
\end{tikzpicture}
\end{align}
where the spectral parameters of the bosonic vertices are indicated on the picture, and the diagonally attached vertices are vertices in the stochastic six vertex model of \eqref{eq:six-vertex-weights}, rotated clockwise by 45 degrees.
\end{theorem}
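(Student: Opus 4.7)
The plan is a direct case-by-case verification exploiting the ice rule: both vertex types preserve the number of paths through each vertex, so the boundary data $(i_1, i_2, j_1, j_2, m, n)$ must satisfy $i_1 + i_2 + m = j_1 + j_2 + n$. With $i_1, i_2, j_1, j_2 \in \{0,1\}$, this leaves at most sixteen boundary configurations. For each of them, the internal data --- the intermediate six-vertex labels $(k_1, k_2)$ and the intermediate bosonic state $p$ --- are tightly constrained: on the LHS, once $(k_1, k_2)$ is chosen among the six admissible six-vertex configurations of Figure \ref{fig:6types}, conservation through the lower bosonic vertex fixes $p$ uniquely; the same holds on the RHS. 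Thus the ostensible double sum in \eqref{eq:RLL} collapses in every case to the enumeration of the compatible six-vertex crossings.

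I would first dispose of the trivial cases $(i_1, i_2) \in \{(0,0), (1,1)\}$: the six-vertex vertex has a unique admissible configuration ($\nul$ or $\all$) of weight $1$, so each side reduces to a single product of two bosonic weights, and equality is immediate after noting that the product $a \cdot b^{-1}$ of spectral parameters is symmetric between the two sides. The nontrivial cases $(i_1, i_2) = (1,0)$ and $(i_1, i_2) = (0,1)$ each admit two six-vertex configurations (e.g.\ $\rr$ and $\ru$ when $(i_1, i_2) = (1,0)$); each choice of $(j_1, j_2)$ consistent with the ice rule then yields a subcase in which each side is a two-term sum. In each subcase, the identity to prove is a simple linear relation between the six-vertex probabilities \eqref{eq:six-vertex-weights} and the bosonic weights \eqref{eq:black-vertices}, involving $a$, $b$, $t$, and (at most) the single factor $1 - t^{m+1}$ that appears whenever an incoming horizontal arrow bends into the vertical bosonic column.

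The main obstacle is the bookkeeping rather than any serious algebra: one has to track carefully which spectral parameter ($a$ or $b^{-1}$) labels each bosonic vertex on each side (they are swapped between LHS and RHS, which is precisely the content of the YBE) and write the six-vertex weights as functions of the appropriate combination of $a$ and $b$ coming from the diagonal attachment in \eqref{eq:RLL}. A more conceptual alternative is to recognize the $t$-boson vertex as the $s \to 0$ limit of the higher-spin $L$-operator from \cite{B,BP1}, and to deduce \eqref{eq:RLL} from the Yang--Baxter equation already established at the higher-spin level; in that framework the stochastic six-vertex $R$-matrix is the fundamental intertwiner of quantum affine $sl_2$ and \eqref{eq:RLL} is an instance of the standard $RLL = LLR$ relation, so the statement follows by a continuity/degeneration argument without any case analysis. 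Given the remark immediately following \eqref{eq:black-vertices}, this is likely the route the authors actually take.
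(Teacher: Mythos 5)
Your direct case-by-case verification of the sixteen boundary choices is exactly the paper's proof, which disposes of the theorem ``by direct computation, since there are only sixteen relations to verify'' with $m,n$ arbitrary; the authors do not take the higher-spin $s\to0$ degeneration route you suggest as an alternative, so your primary plan is the one actually used. One minor slip in your bookkeeping: the collapse to a single configuration in the cases $(i_1,i_2)\in\{(0,0),(1,1)\}$ happens only on the left-hand side of \eqref{eq:RLL}, since on the right-hand side the six-vertex weight \eqref{eq:six-vertex-weights} acts on $(k_1,k_2)$ rather than $(i_1,i_2)$ (for instance $(i_1,i_2)=(0,0)$, $(j_1,j_2)=(1,0)$ still gives a two-term sum on the right), though this does not affect the viability of the direct check.
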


\begin{proof}
This is by direct computation, since there are only sixteen relations to verify (all possible choices of $i_1,i_2,j_1,j_2$), treating $m$ and $n$ as arbitrary non-negative integers.
\end{proof}

It is important to introduce an alternative normalization of the vertex weights \eqref{eq:black-vertices}, obtained by sending $a \rightarrow b^{-1}$ and then simply multiplying all vertices by $b$:
\begin{align}
\label{eq:red-vertices}
\begin{array}{cccc}
\begin{tikzpicture}[scale=0.8,>=stealth]
\draw[lred,ultra thick] (-1,0) -- (1,0);
\draw[lred,line width=10pt] (0,-1) -- (0,1);
\node[below] at (0,-1) {$m$};
\draw[ultra thick,->,rounded corners] (-0.075,-1) -- (-0.075,1);
\draw[ultra thick,->,rounded corners] (0.075,-1) -- (0.075,1);
\node[above] at (0,1) {$m$};
\end{tikzpicture}
\quad\quad\quad
&
\begin{tikzpicture}[scale=0.8,>=stealth]
\draw[lred,ultra thick] (-1,0) -- (1,0);
\draw[lred,line width=10pt] (0,-1) -- (0,1);
\node[below] at (0,-1) {$m$};
\draw[ultra thick,->,rounded corners] (-0.075,-1) -- (-0.075,1);
\draw[ultra thick,->,rounded corners] (0.075,-1) -- (0.075,0) -- (1,0);
\node[above] at (0,1) {$m-1$};
\end{tikzpicture}
\quad\quad\quad
&
\begin{tikzpicture}[scale=0.8,>=stealth]
\draw[lred,ultra thick] (-1,0) -- (1,0);
\draw[lred,line width=10pt] (0,-1) -- (0,1);
\node[below] at (0,-1) {$m$};
\draw[ultra thick,->,rounded corners] (-1,0) -- (-0.15,0) -- (-0.15,1);
\draw[ultra thick,->,rounded corners] (0,-1) -- (0,1);
\draw[ultra thick,->,rounded corners] (0.15,-1) -- (0.15,1);
\node[above] at (0,1) {$m+1$};
\end{tikzpicture}
\quad\quad\quad
&
\begin{tikzpicture}[scale=0.8,>=stealth]
\draw[lred,ultra thick] (-1,0) -- (1,0);
\draw[lred,line width=10pt] (0,-1) -- (0,1);
\node[below] at (0,-1) {$m$};
\draw[ultra thick,->,rounded corners] (-1,0) -- (-0.15,0) -- (-0.15,1);
\draw[ultra thick,->,rounded corners] (0,-1) -- (0,1);
\draw[ultra thick,->,rounded corners] (0.15,-1) -- (0.15,0) -- (1,0);
\node[above] at (0,1) {$m$};
\end{tikzpicture}
\\
b
\quad\quad\quad
&
1
\quad\quad\quad
&
b (1-t^{m+1})
\quad\quad\quad
&
1
\end{array}
\end{align}
We use a red background to indicate that this normalization is employed, rather than that of \eqref{eq:black-vertices}.

\subsection{Row-to-row operators and their exchange relations}
\label{sec:row-ops}

In what follows, we let $V_i$ be an infinite dimensional vector space with basis vectors $\{\ket{m}_i\}_{m \in \mathbb{Z}_{\geq0}}$. It has a dual space $V_i^{*}$ spanned by $\{\bra{m}_i\}_{m \in \mathbb{Z}_{\geq0}}$, where $\bra{m}_i \ket{n}_i = \delta_{m,n}$ for all $m,n \in \mathbb{Z}_{\geq 0}$. Further, we let $V_{1\dots L}$ denote the $L$-fold tensor product $\bigotimes_{i =1}^{L} V_i$.

Joining $L$ of the vertices \eqref{eq:black-vertices} horizontally (with common spectral parameter $a$) and summing over all possible states on internal horizontal edges, we obtain a {\it row vertex.} We denote its Boltzmann weight\footnote{Note that, once all external states are specified, each internal horizontal edge is forced to assume a unique value. The weight \eqref{eq:row-vert} is therefore fully factorized.} by
\begin{align}
\label{eq:row-vert}
w_a
\left(
\begin{tikzpicture}[scale=0.5,baseline=(current bounding box.center)]
\draw[lgray,thick] (-1,0) -- (6,0);
\node[left] at (-0.8,0) {\tiny $i$};\node[right] at (5.8,0) {\tiny $j$};
\foreach\x in {0,...,5}{
\draw[lgray,line width=7pt] (\x,-1) -- (\x,1);
}
\node[below,text centered] at (0,-0.8) {\tiny $m_{L}$};\node[above] at (0,0.8) {\tiny $n_{L}$};
\node[below,text centered] at (3,-0.8) {$\cdots$};\node[above] at (3,0.8) {$\cdots$};
\node[below,text centered] at (5,-0.8) {\tiny $m_1$};\node[above] at (5,0.8) {\tiny $n_1$};
\end{tikzpicture}
\right)
\equiv
w_a\Big(i,\{m_1,\dots,m_{L}\} \Big| j,\{n_1,\dots,n_{L}\}\Big).
\end{align}
We then construct row-to-row operators that act linearly on $V_{1\dots L}$ as follows:
\begin{align*}
T_a(i|j) : \ket{n_1}_1 \otimes \cdots \otimes \ket{n_{L}}_{L}
\mapsto
\sum_{m_1,\dots,m_{L} \geq 0}
w_a\Big(i,\{m_1,\dots,m_{L}\} \Big| j,\{n_1,\dots,n_{L}\}\Big)
\ket{m_1}_1 \otimes \cdots \otimes \ket{m_{L}}_{L}.
\end{align*}
There are in total four such operators, corresponding to all possible values of $0 \leq i,j \leq 1$. It is more conventional to label them alphabetically, by writing
\begin{align}
\label{eq:row-ops}
\begin{pmatrix}
T_a(0|0) & T_a(0|1)
\\ \\
T_a(1|0) & T_a(1|1)
\end{pmatrix}
\equiv
\begin{pmatrix}
A_{L}(a) & B_{L}(a)
\\ \\
C_{L}(a) & D_{L}(a)
\end{pmatrix};
\end{align}
this is known as the {\it monodromy matrix}\/ in the context of the algebraic Bethe ansatz.
\begin{corollary}
\label{thm:RTT}
For any fixed $0 \leq i_1,i_2,j_1,j_2 \leq 1$ and $\{m_1,\dots,m_{L}\},\{n_1,\dots,n_{L}\} \in \mathbb{Z}_{\geq 0}$, we have the following relation:
\begin{multline}
\label{eq:RTT}
\sum_{0 \leq k_1,k_2 \leq 1}\
\sum_{p_1,\dots,p_{L}\geq 0}\ \ \
\begin{tikzpicture}[baseline=(current bounding box.center),scale=0.8]
\draw[dotted,thick] (-2,1) node[left] {$i_1$} -- (-1,0) node[below] {$k_1$};
\draw[dotted,thick] (-2,0) node[left] {$i_2$} -- (-1,1) node[above] {$k_2$};
\draw[lgray,ultra thick] (-1,1) -- (4,1) node[right,black] {$j_2$};
\draw[lgray,ultra thick] (-1,0) -- (4,0) node[right,black] {$j_1$};
\foreach\x in {0,...,3}{
\draw[lgray,line width=10pt] (3-\x,-1) -- (3-\x,2);
}
\node[below] at (3,-1) {$m_1$};
\node at (3,0.5) {$p_1$};
\node[above] at (3,2) {$n_1$};
\node[below] at (2,-1) {$m_2$};
\node at (2,0.5) {$p_2$};
\node[above] at (2,2) {$n_2$};
\node[below] at (0,-1) {$m_{L}$};
\node at (0,0.5) {$p_{L}$};
\node[above] at (0,2) {$n_{L}$};
\node[text centered] at (1,0.5) {$\cdots$};
\draw[thin,dashed,->] (3,0) -- (4,-1) node[right] {$b^{-1}$};
\draw[thin,dashed,->] (3,1) -- (4,2) node[right] {$a$};
\end{tikzpicture}
\quad
=
\\
\quad
\sum_{0 \leq k_1,k_2 \leq 1}\
\sum_{p_1,\dots,p_{L}\geq 0}\ \ \
\begin{tikzpicture}[baseline=(current bounding box.center),scale=0.8]
\foreach\x in {0,...,3}{
\draw[lgray,line width=10pt] (3-\x,-1) -- (3-\x,2);
}
\draw[lgray,ultra thick] (-1,1) node[left,black] {$i_1$} -- (4,1);
\draw[lgray,ultra thick] (-1,0) node[left,black] {$i_2$} -- (4,0);
\draw[dotted,thick] (4,1) node[above] {$k_1$} -- (5,0) node[right] {$j_1$};
\draw[dotted,thick] (4,0) node[below] {$k_2$} -- (5,1) node[right] {$j_2$};
\node[below] at (3,-1) {$m_1$};
\node at (3,0.5) {$p_1$};
\node[above] at (3,2) {$n_1$};
\node[below] at (2,-1) {$m_2$};
\node at (2,0.5) {$p_2$};
\node[above] at (2,2) {$n_2$};
\node[below] at (0,-1) {$m_{L}$};
\node at (0,0.5) {$p_{L}$};
\node[above] at (0,2) {$n_{L}$};
\node[text centered] at (1,0.5) {$\cdots$};
\draw[thin,dashed,->] (0,0) -- (-1,-1) node[left] {$a$};
\draw[thin,dashed,->] (0,1) -- (-1,2) node[left] {$b^{-1}$};
\end{tikzpicture}
\end{multline}
where all vertices in a row share a common spectral parameter, as indicated.
\end{corollary}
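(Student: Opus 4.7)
The identity \eqref{eq:RTT} is the classical consequence of the local Yang--Baxter relation \eqref{eq:RLL} obtained by the ``train argument'': one iteratively slides the two horizontal lines carrying the diagonal stochastic six-vertex crossing through the row of $L$ bosonic vertices, one column at a time. My plan is therefore to induct on $L$, with Theorem \ref{thm:RLL} serving as the single-column base case $L=1$.

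For the inductive step, begin with the left-hand side of \eqref{eq:RTT}, in which the crossing sits immediately to the right of the rightmost bosonic column (the column with external states $m_1, n_1$ and internal state $p_1$). Apply Theorem \ref{thm:RLL} to this column, instantiating it as follows: the bosonic spectral parameter is $a$, the vertical legs carry states $m_1, n_1, p_1$, and the four horizontal legs $i_1, i_2, j_1, j_2$ in \eqref{eq:RLL} are matched respectively with the indices $j_2, j_1$ on the external right-hand horizontal legs together with the two new auxiliary indices lying between columns 1 and 2. The single-vertex Yang--Baxter equation then replaces this local configuration by a sum, over the two new auxiliary horizontal indices, in which the crossing has moved to the left of column 1 --- equivalently, immediately to the right of column 2. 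We are thereby reduced to the same problem with a shorter row of $L-1$ bosonic vertices, and the induction hypothesis applies.

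After $L$ iterations the crossing has traversed the entire row and sits immediately to the left of the leftmost column, which is exactly the configuration displayed on the right-hand side of \eqref{eq:RTT}. The intermediate sums over the auxiliary horizontal indices introduced at each step collapse telescopically; the pair of indices left dangling on the far left of the row after the final step is precisely what is summed over by $\sum_{k_1, k_2}$ on the right-hand side of \eqref{eq:RTT}. The sums over vertical indices $p_1, \dots, p_L$ are carried along from one step to the next without interaction, since the single-vertex relation \eqref{eq:RLL} is applied column-by-column and each $p_j$ lives on a single vertical edge. I do not expect any genuine obstacle: the only thing to verify is the bookkeeping of which auxiliary horizontal label plays the role of which index in \eqref{eq:RLL} at each stage, and this is strictly routine.
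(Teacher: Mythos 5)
Your proposal is correct and is essentially the paper's own argument: the paper proves the corollary by the ``unzipping'' argument, i.e.\ repeated application of the local relation \eqref{eq:RLL}, which is exactly your column-by-column induction on $L$ with telescoping sums over the intermediate horizontal edges (and the sums over the $p_i$ carried along passively). The only slip is orientational bookkeeping: in \eqref{eq:RTT} the crossing on the left-hand side sits next to the column carrying $m_L$ and ends up next to the column carrying $m_1$, so your description of which end it starts from is mirrored, but since the argument works equally well sliding the crossing in either direction this does not affect its validity.
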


\begin{proof}
This follows by the {\it unzipping} argument in integrable models, namely, by repeated application of the local relation \eqref{eq:RLL}.
\end{proof}
Like equation \eqref{eq:RLL}, the global Yang--Baxter equation \eqref{eq:RTT} gives rise to sixteen possible exchange relations among the row-to-row operators \eqref{eq:row-ops}. Below we list the ones corresponding to $(i_1,i_2) = (1,0)$, and four possible choices for $(j_1,j_2)$, since we will need them in what follows:
\begin{equation}
\label{eq:exchange1}
\begin{aligned}
(1-ab)
C_{L}(b^{-1}) A_{L}(a)
+
ab (1-t)
A_{L}(b^{-1}) C_{L}(a)
&=
(1-tab)
A_{L}(a) C_{L}(b^{-1}),
\\
(1-ab)
C_{L}(b^{-1}) B_{L}(a)
+
ab (1-t)
A_{L}(b^{-1}) D_{L}(a)
&=
\\
t(1-ab)
B_{L}(a) & C_{L}(b^{-1})
+
ab (1-t)
A_{L}(a) D_{L}(b^{-1}),
\\
(1-ab)
D_{L}(b^{-1}) A_{L}(a)
+
ab(1-t)
B_{L}(b^{-1}) C_{L}(a)
&=
\\
(1-ab)
A_{L}(a) & D_{L}(b^{-1})
+
(1-t)
B_{L}(a) C_{L}(b^{-1}),
\\
(1-ab)
D_{L}(b^{-1}) B_{L}(a)
+
ab (1-t)
B_{L}(b^{-1}) D_{L}(a)
&=
(1-tab)
B_{L}(a) D_{L}(b^{-1}).
\end{aligned}
\end{equation}

\subsection{Exchange relations in the limit of infinite volume}

Next we will send $L$, the horizontal dimension of the lattice, to infinity.
If $a$ is a generic parameter, some care is needed in order for the row-to-row operators to be well-defined in this limit. More specifically, both $\lim_{L \rightarrow \infty} A_L(a)$ and $\lim_{L \rightarrow \infty} B_L(a)$ make sense as formal power series in $a$, but $\lim_{L \rightarrow \infty} C_L(a)$ and $\lim_{L \rightarrow \infty} D_L(a)$ do not.

A partial cure to this problem is to replace every vertex that appears by its red counterpart \eqref{eq:red-vertices}. We denote the resulting row-to-row operators using a bar, \ie by $\b{A}_{L}(b)$, $\b{B}_{L}(b)$, $\b{C}_{L}(b)$ and $\b{D}_{L}(b)$. In that case, $\lim_{L \rightarrow \infty} \b{C}_{L}(b)$ and $\lim_{L \rightarrow \infty} \b{D}_{L}(b)$ now make sense as formal power series in $b$, while $\lim_{L \rightarrow \infty} \b{A}_{L}(b)$ and $\lim_{L \rightarrow \infty} \b{B}_{L}(b)$ do not.

Clearly in the infinite volume limit, we can only ever make sense of half of the row vertices as formal power series. On the other hand, if the spectral parameter is taken to be real, row-to-row operators with infinite degree in this parameter might then converge to zero. Returning to the equations \eqref{eq:exchange1}, we multiply each by $b^{L}$, before taking $L \rightarrow \infty$. Assuming that $a$ and $b$ are real parameters satisfying $|ab| < 1$, we find that the products of operators $\b{A}_{L}(b) C_{L}(a)$, $\b{A}_{L}(b) D_{L}(a)$, $\b{B}_{L}(b) C_{L}(a)$ and $\b{B}_{L}(b) D_{L}(a)$ become vanishingly small in the limit, and hence
\begin{equation}
\label{eq:exchange2}
\begin{aligned}
(1-ab)
\b{C}(b) A(a)
&=
(1-tab)
A(a) \b{C}(b),
\\
(1-ab)
\b{C}(b) B(a)
&=
t(1-ab)
B(a) \b{C}(b)
+
ab (1-t)
A(a) \b{D}(b),
\\
(1-ab)
\b{D}(b) A(a)
&=
(1-ab)
A(a) \b{D}(b)
+
(1-t)
B(a) \b{C}(b),
\\
(1-ab)
\b{D}(b) B(a)
&=
(1-tab)
B(a) \b{D}(b),
\end{aligned}
\end{equation}
where we write
\begin{align*}
A(a) := \lim_{L \rightarrow \infty} A_{L}(a),\quad
B(a) := \lim_{L \rightarrow \infty} B_{L}(a),\quad
\b{C}(b) := \lim_{L \rightarrow \infty} \b{C}_{L}(b),\quad
\b{D}(b) := \lim_{L \rightarrow \infty} \b{D}_{L}(b).
\end{align*}
We recognise on the right hand sides of \eqref{eq:exchange2}, collectively, the Boltzmann weights of the six vertices of the stochastic six vertex model (up to normalization). Graphically, we can write all of these equations in the form
\begin{multline}
\label{graph-exchange}
\left(
\frac{1-a b}{1-t a b}
\right)
\sum_{p_1,p_2,\dots \geq 0}\ \ \
\begin{tikzpicture}[baseline=(current bounding box.center),>=stealth,scale=0.8]
\draw[lgray,ultra thick] (-1,1) node[left,black] {$a$}
-- (4,1) node[right,black] {$j_2$};
\draw[lred,ultra thick] (-1,0) node[left,black] {$b$}
-- (4,0) node[right,black] {$j_1$};
\foreach\x in {0,...,3}{
\draw[lgray,line width=10pt] (3-\x,0.5) -- (3-\x,2);
\draw[lred,line width=10pt] (3-\x,-1) -- (3-\x,0.5);
}
\node[below] at (3,-1) {$m_1$};
\node at (3,0.5) {$p_1$};
\node[above] at (3,2) {$n_1$};
\node[below] at (2,-1) {$m_2$};
\node at (2,0.5) {$p_2$};
\node[above] at (2,2) {$n_2$};
\node[text centered] at (0,0.5) {$\cdots$};
\node[text centered] at (1,0.5) {$\cdots$};
\draw[ultra thick,->] (-1,0) -- (0,0);
\end{tikzpicture}
\quad
=
\\
\quad
\sum_{0 \leq k_1,k_2 \leq 1}\
\sum_{p_1,p_2,\dots \geq 0}\ \ \
\begin{tikzpicture}[baseline=(current bounding box.center),>=stealth,scale=0.8]
\foreach\x in {0,...,3}{
\draw[lgray,line width=10pt] (3-\x,-1) -- (3-\x,0.5);
\draw[lred,line width=10pt] (3-\x,0.5) -- (3-\x,2);
}
\draw[lred,ultra thick] (-1,1) node[left,black] {$b$}
-- (4,1);
\draw[lgray,ultra thick] (-1,0) node[left,black] {$a$}
-- (4,0);
\draw[dotted,thick] (4,1) node[above] {$k_1$} -- (5,0) node[right] {$j_1$};
\draw[dotted,thick] (4,0) node[below] {$k_2$} -- (5,1) node[right] {$j_2$};
\node[below] at (3,-1) {$m_1$};
\node at (3,0.5) {$p_1$};
\node[above] at (3,2) {$n_1$};
\node[below] at (2,-1) {$m_2$};
\node at (2,0.5) {$p_2$};
\node[above] at (2,2) {$n_2$};
\node[text centered] at (0,0.5) {$\cdots$};
\node[text centered] at (1,0.5) {$\cdots$};
\draw[ultra thick,->] (-1,1) -- (0,1);
\end{tikzpicture}
\end{multline}
with the four possibilities in \eqref{eq:exchange2} given by the four possible choices of $j_1,j_2 \in \{0,1\}$.

\subsection{One-variable skew Hall--Littlewood polynomials in terms of row-to-row operators}

At the moment the vectors in $V_1 \otimes V_2 \otimes \cdots$ and $V_1^{*} \otimes V_2^{*} \otimes \cdots$ are parametrized by strings of non-negative integers, \cf Section \ref{sec:row-ops}. It is convenient to extend this notation to partitions, identifying a partition $\mu$ with the string of its multiplicities $(m_1,m_2,\dots)$; $\mu = 1^{m_1} 2^{m_2} \dots$, $m_j = \#\{i : \mu_i = j\}$.

\begin{lemma}
We have the following identities, relating matrix elements of row-to-row operators with one-variable skew Hall--Littlewood polynomials:
\begin{align}
\label{P-one-row}
&
\bra{\lambda} A(a) \ket{\mu}
=
\left( \bm{1}_{\lambda_1' = \mu_1'} \right)
P_{\lambda/\mu}(a),
\quad
&&
\bra{\lambda} B(a) \ket{\mu}
=
\left( \bm{1}_{\lambda_1' = \mu_1'+1} \right)
P_{\lambda/\mu}(a),
\\
\label{Q-one-row}
&
\bra{\mu} \b{C}(b) \ket{\lambda}
=
\left( \bm{1}_{\lambda_1' = \mu_1'+1} \right)
Q_{\lambda/\mu}(b),
\quad
&&
\bra{\mu} \b{D}(b) \ket{\lambda}
=
\left( \bm{1}_{\lambda_1' = \mu_1'} \right)
Q_{\lambda/\mu}(b),
\end{align}
where $\lambda$ and $\mu$ are any two partitions. These relations were first observed in \cite{K}.
\end{lemma}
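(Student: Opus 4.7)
The plan is a direct combinatorial verification. Once the partitions $\mu, \lambda$ and the boundary states $(i,j)$ are fixed, arrow conservation at each vertex of the row rigidly determines the horizontal state on every internal edge, so each matrix element is a \emph{single} product of local Boltzmann weights rather than a genuine sum; the claim then reduces to matching that product against Macdonald's explicit one-variable formulas for skew Hall--Littlewood polynomials. Concretely, I would first encode the partitions by their multiplicity sequences and use the local relation $m_k + H_k^L = n_k + H_k^R$ at column $k$ to telescope the horizontal state on the internal edges into $H_k^L = i + (\text{bottom})'_{k+1} - (\text{top})'_{k+1}$. The right boundary condition $H_1^R = j$ then gives $j - i = (\text{bottom})'_1 - (\text{top})'_1$, producing exactly the indicators $\mathbf 1_{\lambda_1' = \mu_1'}$ for $A, \b D$ and $\mathbf 1_{\lambda_1' = \mu_1' + 1}$ for $B, \b C$, while the requirement $H_k^L \in \{0,1\}$ forces $\lambda/\mu$ to be a horizontal strip (and otherwise makes the matrix element vanish).

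With all horizontal states pinned down, the matrix element for $A(a)$ and $B(a)$ decomposes column by column. Setting $\delta_k := \lambda_k' - \mu_k' \in \{0,1\}$, each column with $\delta_k = 1$ contributes exactly one factor of $a$ through either a type-2 or type-4 vertex of \eqref{eq:black-vertices}, which accounts for the monomial $a^{|\lambda|-|\mu|}$. The columns at which the horizontal state drops from $1$ to $0$ (type 3) each contribute $1 - t^{m_k + 1}$; these occur precisely where $\delta_{k+1} = 1, \delta_k = 0$. Re-expressing $m_k + 1$ as the multiplicity of $k$ in $\mu$ (using the horizontal-strip relation at such columns), the product over type-3 columns reassembles into Macdonald's coefficient $\psi_{\lambda/\mu}(t)$ (see Chapter III, \S5 of \cite{M}), which together with the $a$-monomial reproduces $P_{\lambda/\mu}(a)$.

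For $\b C(b)$ and $\b D(b)$ the argument is structurally identical, but the bra-ket orientation swaps the roles of top and bottom, so that type-3 columns now occur where $\delta_k = 1, \delta_{k+1} = 0$ and their weights $1 - t^{m_k+1}$ reassemble into $\phi_{\lambda/\mu}(t)$ rather than $\psi_{\lambda/\mu}(t)$; the relocated scalar factor $b$ in the red normalization \eqref{eq:red-vertices} (now carried by the type-1 and type-3 columns) still totals to $b^{|\lambda|-|\mu|}$, yielding $Q_{\lambda/\mu}(b)$. The only real obstacle is not conceptual but bookkeeping: carefully tracking which of $\lambda$ and $\mu$ plays the role of top versus bottom for each of the four operators, so that $m_k$ in the vertex weight $1 - t^{m_k+1}$ is consistently identified with the multiplicity of $k$ in whichever partition makes the resulting product agree with Macdonald's formulas for $\psi$ and $\phi$.
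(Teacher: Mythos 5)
Your proposal is correct and is essentially the paper's own argument: the paper likewise treats each matrix element as a single frozen product of local weights (path conservation fixing all internal horizontal edges and the first-column constraints) and compares it directly with the explicit one-variable formulas $P_{\lambda/\mu}(a)=a^{|\lambda|-|\mu|}\prod_{i:\,m_i(\lambda)+1=m_i(\mu)}(1-t^{m_i(\mu)})$ and $Q_{\lambda/\mu}(b)=b^{|\lambda|-|\mu|}\prod_{i:\,m_i(\lambda)=m_i(\mu)+1}(1-t^{m_i(\lambda)})$. Your telescoping identity $H_k^L=i+(\text{bottom})'_{k+1}-(\text{top})'_{k+1}$ and the identification of the type-3 columns with the sets indexing $\psi_{\lambda/\mu}$ resp. $\varphi_{\lambda/\mu}$ simply spell out the bookkeeping that the paper declares immediate, and all of these details check out.
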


\begin{proof}
These relations are immediate by comparing the Boltzmann weights \eqref{eq:black-vertices} and \eqref{eq:red-vertices} used in the row-to-row operators with the known expressions for the one-variable skew Hall--Littlewood polynomials:
\begin{align*}
P_{\lambda/\mu}(a)
=
\left\{
\begin{array}{ll}
a^{|\lambda|-|\mu|}
\prod_{i: m_i(\lambda)+1 = m_i(\mu)}
(1-t^{m_i(\mu)}),
&
\quad
\lambda \succ \mu,
\\
0,
&
\quad
{\rm otherwise},
\end{array}
\right.
\end{align*}

\begin{align*}
Q_{\lambda/\mu}(b)
=
\left\{
\begin{array}{ll}
b^{|\lambda|-|\mu|}
\prod_{i: m_i(\lambda) = m_i(\mu)+1}
(1-t^{m_i(\lambda)}),
&
\quad
\lambda \succ \mu,
\\
0,
&
\quad
{\rm otherwise}.
\end{array}
\right.
\end{align*}
The additional constraints on the lengths of the partitions in \eqref{P-one-row}--\eqref{Q-one-row} can be easily deduced by path-conservation.
\end{proof}

\subsection{Writing \eqref{eq:distrib-hl} as an expectation value}

Now we come to the key result of this section, allowing us to express the weights of the probability distribution \eqref{eq:distrib-hl} as expectations in the $t$-boson model.
\begin{theorem}
Let $M,N$ be two positive integers, $S \in \mathcal{S}_{M,N}^{+}$ a binary string, and $\mu \equiv \mu(S)$ the partition associated to $S$. Then
\begin{align}
\label{eq:expectation-skew}
{\rm Prob}^{S}_{M,N}
(
[\la^{(1)}* \dots * \la^{(M+N-1)}]
=
\nu/\mu)
=
\frac{1}{\Pi^{S}(a_1,\dots,a_M;b_1,\dots,b_N)}
\times
\Big\langle 0 \Big|
\prod_{\substack{\longleftarrow \\ i=1 }}^{M+N}
O^{\nu/\mu}_{i}
\Big| 0 \Big\rangle
\end{align}
where we have defined the operators
\begin{align}
\label{eq:skew-ops}
O^{\nu/\mu}_{i}
=
\left\{
\begin{array}{lll}
A(a_{p(i)}),
&
\quad
S(i)=+,
&
\quad
T(i)=+,
\\
B(a_{p(i)}),
&
\quad
S(i)=+,
&
\quad
T(i)=-,
\\
\b{C}(b_{N-m(i)+1}),
&
\quad
S(i)=-,
&
\quad
T(i)=+,
\\
\b{D}(b_{N-m(i)+1}),
&
\quad
S(i)=-,
&
\quad
T(i)=-,
\end{array}
\right.
\end{align}
with $p(i)$ and $m(i)$ denoting the number of pluses and minuses in $(S(1),\dots,S(i))$, and where $T \in \mathcal{S}_{M,N}^{-}$ is the binary string such that $\nu = \nu(T)$.

\begin{proof}
By inserting a complete set of states $\sum_{\lambda^{(i)}} \ket{\lambda^{(i)}} \bra{\lambda^{(i)}}$ between the operators $O^{\nu/\mu}_{i+1}$ and $O^{\nu/\mu}_i$ for all $1 \leq i \leq M+N-1$, and using the single-operator expectation values \eqref{P-one-row}--\eqref{Q-one-row}, it is easy to check that equation \eqref{eq:expectation-skew} transforms into the right hand side of \eqref{eq:distrib-hl}.
\end{proof}

\end{theorem}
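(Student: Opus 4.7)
The plan is to expand the matrix element on the right-hand side of \eqref{eq:expectation-skew} by inserting a resolution of the identity $\sum_{\lambda^{(i)}} \ket{\lambda^{(i)}}\bra{\lambda^{(i)}}$ between each pair of consecutive operators $O^{\nu/\mu}_{i+1}$ and $O^{\nu/\mu}_{i}$, for $1 \le i \le M+N-1$. With the boundary convention that $\bra{0}$ and $\ket{0}$ correspond to the empty partition $\lambda^{(0)} = \lambda^{(M+N)} = \varnothing$, this rewrites the matrix element as
\[
\sum_{\lambda^{(1)},\dots,\lambda^{(M+N-1)}} \prod_{i=1}^{M+N} \bra{\lambda^{(i-1)}} O^{\nu/\mu}_i \ket{\lambda^{(i)}}.
\]
One has to be a little careful with the bra/ket ordering for the $\b C$ and $\b D$ factors, since \eqref{Q-one-row} is written with the smaller partition on the left; this is consistent with $S(i)=-$ forcing $\lambda^{(i-1)} \supset \lambda^{(i)}$ in the process \eqref{eq:HL-proc-def-2}.

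Next, I would substitute \eqref{P-one-row}--\eqref{Q-one-row} into the summand. Each factor becomes either $P_{\lambda^{(i)}/\lambda^{(i-1)}}(a_{p(i)})$ when $S(i)=+$, or $Q_{\lambda^{(i-1)}/\lambda^{(i)}}(b_{N-m(i)+1})$ when $S(i)=-$, multiplied by an indicator comparing $(\lambda^{(i-1)})'_1$ and $(\lambda^{(i)})'_1$. Comparison with \eqref{eq:HL-proc-def-2/55} shows that the product of the skew Hall--Littlewood factors is precisely $\prod_{i=1}^{M+N} W^{(S,i)}_{M,N}$, so once divided by $\Pi^{S}$ we recover $\mathrm{Prob}^{S}_{M,N}(\lambda^{(1)}*\dots*\lambda^{(M+N-1)})$ term by term.

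It then remains to show that the product of indicators coming from \eqref{P-one-row}--\eqref{Q-one-row} enforces exactly the event $[\lambda^{(1)}*\dots*\lambda^{(M+N-1)}] = \nu/\mu$. According to the construction in Section \ref{sec:support}, the outer partition $\nu$ is encoded by a binary string $T$ whose $i$-th entry is determined by the pair $(S(i),\,(\lambda^{(i-1)})'_1 - (\lambda^{(i)})'_1)$. Reading this off case by case, the four cases in \eqref{eq:skew-ops} are seen to select precisely the operator whose matrix-element indicator matches the jump in first-column length prescribed by $T(i)$. Consequently the summand is nonzero exactly on those sequences with support $\nu/\mu$, and summing them reproduces \eqref{eq:distrib-hl}.

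The main obstacle I expect is not a hard estimate but bookkeeping: one must verify cleanly that (i) the parameter indexing $a_{p(i)}$ and $b_{N-m(i)+1}$ in \eqref{eq:skew-ops} lines up with the definition \eqref{eq:HL-proc-def-2/55} of the Hall--Littlewood weights, including the fact that the $Q$-specializations are consumed in the reverse order $b_N, b_{N-1},\dots$ as one reads the minuses of $S$ from left to right; and (ii) the four case distinctions in \eqref{eq:skew-ops} correctly reproduce the two case distinctions in the definition of $T$ in Section \ref{sec:support}. Once these combinatorial identifications are confirmed, the theorem follows directly from the single-operator identities \eqref{P-one-row}--\eqref{Q-one-row} and the vacuum boundary condition on $\bra{0}$ and $\ket{0}$.
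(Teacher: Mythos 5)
Your proposal is correct and takes essentially the same route as the paper: insert resolutions of the identity between consecutive operators, use the single-operator matrix elements \eqref{P-one-row}--\eqref{Q-one-row} to recover the weights \eqref{eq:HL-proc-def-2/55}, and check that the indicator functions encode exactly the string $T$ of Section \ref{sec:support}, hence the event $[\la^{(1)}*\dots*\la^{(M+N-1)}]=\nu/\mu$. The only slip is in your displayed expansion: since the product is ordered $O^{\nu/\mu}_{M+N}\cdots O^{\nu/\mu}_1$ acting on $\ket{0}$, the factors should read $\bra{\lambda^{(i)}} O^{\nu/\mu}_i \ket{\lambda^{(i-1)}}$ rather than $\bra{\lambda^{(i-1)}} O^{\nu/\mu}_i \ket{\lambda^{(i)}}$ --- which is in fact the ordering your subsequent identification of the $P$- and $Q$-factors implicitly uses.
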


It is helpful to think about \eqref{eq:expectation-skew} graphically. Using the graphical representation of the row-to-row operators, \eqref{eq:expectation-skew} can be cast as a partition function on a semi-infinite lattice. The boundary conditions are readily deduced from \eqref{eq:skew-ops}: {\bf 1.} Numbering the rows of the lattice from $1$ to $M+N$, going from top to bottom, the $i$-th external left edge is unoccupied if $S(i) = +$, and occupied if $S(i) = -$. Furthermore, the $i$-th row of the lattice is comprised of vertices \eqref{eq:black-vertices} with parameter $a_{p(i)}$ if $S(i) = +$, and of vertices \eqref{eq:red-vertices} with parameter $b_{N-m(i)+1}$ if $S(i) = -$. {\bf 2.} Similarly, the $i$-th external right edge is unoccupied if $T(i) = +$, and occupied if $T(i) = -$. {\bf 3.} The top and bottom external vertical edges are all unoccupied by paths. See Figure \ref{fig:expectation-example} for an example of this graphical representation.

\begin{figure}
\begin{tikzpicture}[scale=0.8,baseline=(current bounding box.center),>=stealth]
\foreach\y in {0,2,3,5}{
\draw[lgray,thick] (-1,\y) -- (7,\y);
\foreach\x in {0,...,6}{
\draw[lgray,line width=10pt] (\x,\y-0.5) -- (\x,\y+0.5);
}
}
\foreach\y in {-1,1,4}{
\draw[lred,thick] (-1,\y) -- (7,\y);
\foreach\x in {0,...,6}{
\draw[lred,line width=10pt] (\x,\y-0.5) -- (\x,\y+0.5);
}
}
\draw[ultra thick,->] (-1,-1) -- (0,-1); \draw[ultra thick,->] (-1,1) -- (0,1); \draw[ultra thick,->] (-1,4) -- (0,4);
\draw[ultra thick,->] (6,1) -- (7,1); \draw[ultra thick,->] (6,3) -- (7,3); \draw[ultra thick,->] (6,5) -- (7,5);
\node[left] at (-1,-1) {$S(7)=-$}; \node[left] at (-1,0) {$S(6)=+$}; \node[left] at (-1,1) {$S(5)=-$};
\node[left] at (-1,2) {$S(4)=+$};  \node[left] at (-1,3) {$S(3)=+$}; \node[left] at (-1,4) {$S(2)=-$};
\node[left] at (-1,5) {$S(1)=+$};
\node[right] at (7,-1) {$T(7)=+$}; \node[right] at (7,0) {$T(6)=+$}; \node[right] at (7,1) {$T(5)=-$};
\node[right] at (7,2) {$T(4)=+$};  \node[right] at (7,3) {$T(3)=-$}; \node[right] at (7,4) {$T(2)=+$};
\node[right] at (7,5) {$T(1)=-$};
\node[left] at (-3.5,5) {$a_1$};
\node[left] at (-3.5,4) {$b_3$};
\node[left] at (-3.5,3) {$a_2$};
\node[left] at (-3.5,2) {$a_3$};
\node[left] at (-3.5,1) {$b_2$};
\node[left] at (-3.5,0) {$a_4$};
\node[left] at (-3.5,-1) {$b_1$};
\node[below] at (6,-2) {$1$};
\node[below] at (5,-2) {$2$};
\node[below] at (4,-2) {$3$};
\node[below] at (1,-2) {$\longleftarrow$};
\node[below] at (3,-2) {$\cdots$};
\node[below] at (0,-2) {$\infty$};
\end{tikzpicture}
\caption{Graphical representation of the expectation value \eqref{eq:expectation-skew} (suppressing the normalization), in the case $M=4$, $N=3$, $S=(+,-,+,+,-,+,-)$ and $T=(-,+,-,+,-,+,+)$. $N$ paths enter at the left edge of the lattice at the positions of minuses in $S$, and leave via the right edge at the positions of minuses in $T$.}
\label{fig:expectation-example}
\end{figure}
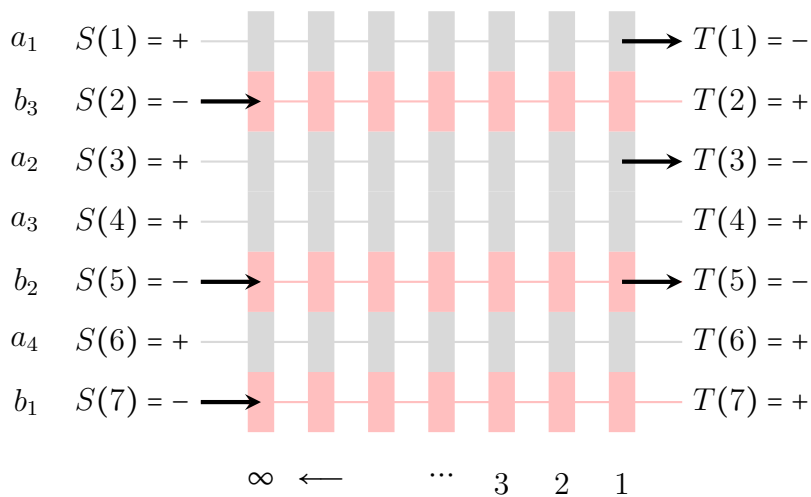

\subsection{Ascending Hall--Littlewood process and stochastic six vertex model on a rectangular domain}

In this section we establish the equivalence between the distributions \eqref{eq:distrib-hl} and \eqref{eq:distrib-6v} in the special case $\mu = \varnothing$. We do this as a warm-up to Theorem \ref{thm:maj}, where the equivalence is proved for arbitrary $\mu$.

\begin{theorem}
\label{thm:min}
Assume that the parameters $(t, \{a_i\}, \{ b_j \} )$ and $( Q, \{\xi_i \}, \{ u_j \} )$ satisfy the conditions \eqref{eq:param-match}, and let $\nu$ be any partition whose Young diagram is contained in the $M \times N$ rectangle. Then we have
\begin{align*}
{\rm Prob}_{M,N}(
[\la^{(1)} \subset \cdots \subset \la^{(M)} \supset \cdots \supset \la^{(M+N-1)}]
=
\nu)
=
{\rm Prob}^{\rm 6V}_{M,N}
(\mathcal{O}(\sigma) = \nu).
\end{align*}
\end{theorem}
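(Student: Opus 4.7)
The plan is to apply the Yang--Baxter equation \eqref{graph-exchange} iteratively to the $t$-boson partition function representation \eqref{eq:expectation-skew} of the left-hand side, converting it into a stochastic six-vertex partition function on the $M\times N$ rectangle, with the normalization $\Pi^{S}$ emerging as the cumulative multiplicative factor produced by the Yang--Baxter moves. Starting from \eqref{eq:expectation-skew} with $S=(+,\dots,+,-,\dots,-)$, so $\mu=\varnothing$, the quantity $\Pi^{S}(a_1,\dots,a_M;b_1,\dots,b_N)\cdot{\rm Prob}_{M,N}([\la^{(1)}\subset\cdots\subset\la^{(M+N-1)}]=\nu)$ becomes a graphical partition function on a semi-infinite lattice with $M$ black rows (spectral parameters $a_1,\dots,a_M$, operators $A/B$) stacked adjacent to $N$ red rows (spectral parameters $b_1,\dots,b_N$, operators $\b{C}/\b{D}$); the choice of $A$ versus $B$ (respectively $\b{C}$ versus $\b{D}$) in each row is determined by the right-edge string $T=T(\nu)$.

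Next I would apply \eqref{graph-exchange} iteratively to interchange every adjacent black--red pair of rows. In the identity \eqref{graph-exchange}, the dotted-vertex-free graph equals $\frac{1-ta_ib_j}{1-a_ib_j}$ times the graph with the dotted vertex. Hence each elementary swap of a black row (parameter $a_i$) and a red row (parameter $b_j$) replaces the local two-row strip by the same rows in the opposite order with one dotted (stochastic six-vertex) vertex attached on the right, and contributes a factor of $\frac{1-ta_ib_j}{1-a_ib_j}$ to the accumulating normalization. Performing the $MN$ swaps required to move every red row past every black row produces a total accumulated factor of $\Pi^{S}=\prod_{i,j}\frac{1-ta_ib_j}{1-a_ib_j}$, and the $MN$ dotted vertices assemble (on a $45^{\circ}$-rotated sublattice) into a stochastic six-vertex model on the $M\times N$ rectangle with parameters matched via \eqref{eq:param-match}.

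Finally I would show that the rearranged semi-infinite $t$-boson region contributes a trivial factor of $1$ and that the interface it shares with the six-vertex rectangle carries precisely the standard boundary conditions of Section \ref{sec:edge-rect}. In the black rows no arrow enters from the left, so by path conservation with the vacuum boundary state $\ket{0}$ on the columns, every column in the black region remains empty and every vertex is of type $1$ in \eqref{eq:black-vertices} (weight $1$). In the red rows each row receives a single left-entering arrow; since arrows in the $t$-boson model travel non-decreasingly upward and all columns terminate in $\bra{0}$ at the top, no arrow may be permanently absorbed into a column, and each arrow transmits straight across via vertices of type $4$ in \eqref{eq:red-vertices} (also weight $1$). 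All red right-ends are therefore occupied and all black right-ends are empty, matching exactly the input boundary of the six-vertex model on the $M\times N$ rectangle; its partition function is ${\rm Prob}^{\rm 6V}_{M,N}(\mathcal{O}(\sigma)=\nu)$, and cancelling the common factor $\Pi^{S}$ yields the theorem. Algebraically, this collapse is witnessed by the identities $A(a)\ket{0}=\b{D}(b)\ket{0}=\ket{0}$, which are immediate from \eqref{P-one-row}--\eqref{Q-one-row}.

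The main obstacle is the combinatorial bookkeeping of the second step: one must verify that after the $MN$ YBE moves the dotted vertices assemble into an $M\times N$ stochastic six-vertex rectangle whose $M+N$ outputs faithfully reproduce the string $T(\nu)$, and whose spectral parameters match those of the six-vertex model under \eqref{eq:param-match}. A secondary technical point is that the straight-transmission argument of the third step must be understood inside the infinite-volume formalism developed earlier in this section, but this is directly handled by the operator identities just quoted.
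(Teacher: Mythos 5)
Your proposal is correct and follows essentially the same route as the paper: you start from the expectation-value representation \eqref{eq:expectation-skew} with $S=(+,\dots,+,-,\dots,-)$, apply the exchange relation \eqref{graph-exchange} exactly $MN$ times to carry the red rows above the black ones (the accumulated factor being precisely $\Pi^{S}$, which cancels the normalization), and then argue that the residual semi-infinite $t$-boson lattice freezes with weight $1$, leaving the rotated six-vertex rectangle with the correct boundary data $T(\nu)$. The only point worth tightening (which the paper itself also states tersely) is the red-row freezing: ``no arrow is permanently absorbed by a column'' does not by itself exclude an arrow climbing a column and exiting in a \emph{higher} red row, so one should add the short counting argument that the $N$ arrows must exit through the $N$ capacity-one red right edges weakly above their entry rows, forcing the identity matching and hence straight horizontal transmission with all columns empty.
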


\begin{proof}
This is an immediate consequence of the Yang--Baxter equation \eqref{graph-exchange}, which links the two processes. We start from the expression \eqref{eq:expectation-skew} for $S = (+,\dots,+,-,\dots,-)$, and adopt its partition function representation (in what follows we will assume the specific running example $M=4$, $N=3$, $\nu = (3,3,2,1)$, but this is only for definiteness, and the method of proof clearly extends to arbitrary $\nu$):
\begin{multline}
\label{eq:rb-lattice}
{\rm Prob}_{M,N}(
[\la^{(1)} \subset \cdots \subset \la^{(M)} \supset \cdots \supset \la^{(M+N-1)}]
=
\nu)
\\
=
\prod_{i=1}^{M}
\prod_{j=1}^{N}
\left(
\frac{1-a_i b_j}{1-t a_i b_j}
\right)
\times
\begin{tikzpicture}[scale=0.8,baseline=(current bounding box.center),>=stealth]
\foreach\x in {0,...,6}{
\draw[lgray,line width=10pt] (\x,2) -- (\x,7);
}
\foreach\y in {3,...,6}{
\draw[lgray,thick] (-1,\y) -- (7,\y);
}
\foreach\x in {0,...,6}{
\draw[lred,line width=10pt] (\x,-2) -- (\x,2);
}
\foreach\y in {-1,...,1}{
\draw[lred,thick] (-1,\y) -- (7,\y);
}
\draw[ultra thick,->] (-1,-1) -- (0,-1); \draw[ultra thick,->] (-1,0) -- (0,0); \draw[ultra thick,->] (-1,1) -- (0,1);
\draw[ultra thick,->] (6,1) -- (7,1); \draw[ultra thick,->] (6,4) -- (7,4); \draw[ultra thick,->] (6,6) -- (7,6);
\node[right] at (7,-1) {$T(7)=+$}; \node[right] at (7,0) {$T(6)=-$}; \node[right] at (7,1) {$T(5)=+$};
\node[right] at (7,3) {$T(4)=+$};
\node[right] at (7,4) {$T(3)=-$};
\node[right] at (7,5) {$T(2)=+$};
\node[right] at (7,6) {$T(1)=-$};
\node[left] at (-1,6) {$a_1$};
\node[left] at (-1,4.5) {$\vdots$};
\node[left] at (-1,3) {$a_M$};
\node[left] at (-1,1) {$b_N$};
\node[left] at (-1,0.2) {$\vdots$};
\node[left] at (-1,-1) {$b_1$};
\node[below] at (6,-2) {$1$};
\node[below] at (5,-2) {$2$};
\node[below] at (4,-2) {$3$};
\node[below] at (1,-2) {$\longleftarrow$};
\node[below] at (3,-2) {$\cdots$};
\node[below] at (0,-2) {$\infty$};
\end{tikzpicture}
\end{multline}
We observe that the multiplicative factor $(1-a_i b_j)/(1-t a_i b_j)$, $1 \leq i \leq M$, $1 \leq j \leq N$, which appears in \eqref{eq:rb-lattice}, is also present on the left hand side of the exchange relation \eqref{graph-exchange}. This suggests that we should use \eqref{graph-exchange} $MN$ times, effectively transferring the red half of the lattice to the top, and the black half to the bottom. The result is
\begin{multline}
{\rm Prob}_{M,N}(
[\la^{(1)} \subset \cdots \subset \la^{(M)} \supset \cdots \supset \la^{(M+N-1)}]
=
\nu)
\\
=
\begin{tikzpicture}[scale=0.8,baseline=(current bounding box.center),>=stealth]
\foreach\x in {0,...,6}{
\draw[lgray,line width=10pt] (\x,-2) -- (\x,3);
}
\foreach\y in {-1,...,2}{
\draw[lgray,thick] (-1,\y) -- (7,\y);
}
\foreach\x in {0,...,6}{
\draw[lred,line width=10pt] (\x,3) -- (\x,7);
}
\foreach\y in {4,...,6}{
\draw[lred,thick] (-1,\y) -- (7,\y);
}
\draw[thick, dotted] (7,6) -- (11,2) node[below right] {\tiny $T(5)$};
\draw[thick, dotted] (7,5) -- (10.5,1.5) node[below right] {\tiny $T(6)$};
\draw[thick, dotted] (7,4) -- (10,1) node[below right] {\tiny $T(7)$};
\draw[ultra thick,->] (10.5,2.5) -- (11,2);
\draw[thick, dotted] (7,2) -- (9.5,4.5) node[above right] {\tiny $T(1)$};
\draw[thick, dotted] (7,1) -- (10,4) node[above right] {\tiny $T(2)$};
\draw[thick, dotted] (7,0) -- (10.5,3.5) node[above right] {\tiny $T(3)$};
\draw[thick, dotted] (7,-1) -- (11,3) node[above right] {\tiny $T(4)$};
\draw[ultra thick,->] (10,3) -- (10.5,3.5); \draw[ultra thick,->] (9,4) -- (9.5,4.5);
\draw[ultra thick,->] (-1,6) -- (7,6); \draw[ultra thick,->] (-1,5) -- (7,5); \draw[ultra thick,->] (-1,4) -- (7,4);
\node[left] at (-1,2) {$a_1$};
\node[left] at (-1,0.5) {$\vdots$};
\node[left] at (-1,-1) {$a_M$};
\node[left] at (-1,6) {$b_N$};
\node[left] at (-1,5.2) {$\vdots$};
\node[left] at (-1,4) {$b_1$};
\node[below] at (6,-2) {$1$};
\node[below] at (5,-2) {$2$};
\node[below] at (4,-2) {$3$};
\node[below] at (1,-2) {$\longleftarrow$};
\node[below] at (3,-2) {$\cdots$};
\node[below] at (0,-2) {$\infty$};
\end{tikzpicture}
\end{multline}
where we have observed that the paths in the red part of the lattice are forced to propagate horizontally to the right edge, while the black part is completely unoccupied. This allows us to suppress the semi-infinite lattice completely (its Boltzmann weight is just 1), and we obtain
\begin{multline}
{\rm Prob}_{M,N}(
[\la^{(1)} \subset \cdots \subset \la^{(M)} \supset \cdots \supset \la^{(M+N-1)}]
=
\nu)
\\
=
\begin{tikzpicture}[scale=0.8,baseline=(current bounding box.center),>=stealth]
\foreach\x in {1,...,4}{
\draw[thick, dotted] (\x,0) -- (\x,4);
\node[below] at (\x,0) {$a_\x$};
}
\foreach\y in {1,...,3}{
\draw[thick, dotted] (0,\y) -- (5,\y);
\node[left] at (0,\y) {$b_\y$};
\draw[ultra thick,->,rounded corners] (0,\y) -- (1,\y);
}
\foreach\x in {1,3}{
\draw[ultra thick,->,rounded corners] (\x,3) -- (\x,4);
}
\foreach\x in {1,...,4}{
\node[above] at (\x,4) {\tiny $T(\x)$};
}
\foreach\y in {3}{
\draw[ultra thick,->,rounded corners] (4,\y) -- (5,\y);
}
\foreach\y in {5,...,7}{
\node[right] at (5,8-\y) {\tiny $T(\y)$};
}
\end{tikzpicture}
=
{\rm Prob}^{\rm 6V}_{M,N}
(\mathcal{O}(\sigma) = \nu).
\end{multline}

\end{proof}

\subsection{Generic Hall--Littlewood process and stochastic six vertex model on a jagged domain}

\begin{theorem}
\label{thm:maj}
Assume that the parameters $(t, \{a_i\}, \{ b_j \} )$ and $( Q, \{\xi_i \}, \{ u_j \} )$ satisfy the conditions \eqref{eq:param-match}, and let $\nu$, $\mu$ be two partitions whose Young diagrams are contained in the $M \times N$ rectangle, with $\nu \supset \mu$. Let $S \in \mathcal{S}_{M,N}^{+}$ be the length $M+N$ binary string such that $\mu = \mu(S)$. Then
\begin{align*}
{\rm Prob}^{S}_{M,N}(
[\la^{(1)} * \cdots * \la^{(M+N-1)}]
=
\nu/\mu)
=
{\rm Prob}^{\rm 6V}_{M,N}
(\mathcal{O}(\sigma) = \nu/\mu).
\end{align*}
\end{theorem}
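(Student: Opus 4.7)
The approach is a direct generalization of the proof of Theorem \ref{thm:min}: start from the $t$-boson expectation representation \eqref{eq:expectation-skew} of ${\rm Prob}^{S}_{M,N}([\la^{(1)} * \cdots * \la^{(M+N-1)}] = \nu/\mu)$, view it graphically as a partition function on a semi-infinite horizontal strip, and use the Yang--Baxter exchange relation \eqref{graph-exchange} to reshape it into the six vertex partition function on the jagged domain. The new feature compared to the $\mu = \varnothing$ case is that the rows of the strip are now arranged in an arbitrary order determined by $S$: row $i$ (counted from the top) is a black row with parameter $a_{p(i)}$ if $S(i) = +$ and a red row with parameter $b_{N-m(i)+1}$ if $S(i) = -$, with the left boundary condition having incoming arrows exactly at the red rows and the right boundary encoding the partition $\nu$ via $T$.

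The plan is to bubble-sort these $M+N$ rows into the canonical ``all red on top, all black on bottom'' order by repeatedly applying \eqref{graph-exchange} to pairs of adjacent rows where a black row sits directly above a red row. Each such elementary swap \textbf{(i)} moves exactly one six vertex vertex (weighted according to \eqref{eq:six-vertex-weights}) from the strip out to its right boundary, and \textbf{(ii)} contributes a scalar factor of the form $(1 - a_{p(i)} b_{N-m(j)+1})/(1 - t a_{p(i)} b_{N-m(j)+1})$, indexed by the pair of rows being swapped. After all such swaps the strip becomes trivial (the $N$ incoming paths, now on the top $N$ red rows, simply propagate horizontally to the right, and the bottom $M$ black rows are empty), so the strip contributes weight $1$ and can be suppressed.

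The key combinatorial claim to verify is that the swap bookkeeping matches everything on the six vertex side exactly. First, the product over all swaps of the scalar prefactors equals $1/\Pi^S(a_1,\dots,a_M;b_1,\dots,b_N)$, canceling the normalization in \eqref{eq:expectation-skew}; this amounts to observing that the pairs $(i,j)$ with $i<j$, $S(i)=+$, $S(j)=-$ index both the $(+,-)$ inversions of $S$ (equivalently, the swaps needed to sort the rows) and the factors of $\Pi^S$. Second, the two-dimensional array of six vertex vertices deposited on the right boundary forms precisely the jagged domain obtained from the $M \times N$ rectangle by excising the Young diagram $\mu(S)$, as described in Section \ref{sec:jagged}: the trajectory traced by the rightmost accumulated vertices across successive swap stages is exactly the down-right lattice path encoding $\mu(S)$. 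Under the matching \eqref{eq:param-match}, the spectral parameters carried by these vertices become the six vertex row/column weights $u_y$ and $\xi_x$, so the resulting partition function, with outgoing boundary $T$, is ${\rm Prob}^{\rm 6V}_{M,N}(\mathcal{O}(\sigma) = \nu/\mu)$.

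The main obstacle is the combinatorial/geometric verification that the shape swept out by the successive Yang--Baxter swaps \emph{is} the jagged complement of $\mu(S)$, with its vertex parameters correctly assigned. For $S = (+,\dots,+,-,\dots,-)$ this produces the full $M \times N$ rectangle of vertices and recovers Theorem \ref{thm:min}; for general $S$ one must carefully order the swaps (for instance, by iteratively finding the leftmost adjacent black-above-red pair in the current configuration) and check that the trajectory of the rightmost swapped vertex at each stage follows the boundary path of $\mu(S)$, while no swap ever occurs in the portion of the $M \times N$ box that lies inside $\mu(S)$. Once this geometric identification is in place, everything else reduces to the same unzipping argument used in the proof of Theorem \ref{thm:min}.
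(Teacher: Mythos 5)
Your proposal is correct and follows essentially the same route as the paper: starting from the $t$-boson representation \eqref{eq:expectation-skew}, using the Yang--Baxter relation \eqref{graph-exchange} to move the red rows above the black ones, with the factors of $1/\Pi^{S}$ (indexed exactly by the $(+,-)$ inversions of $S$) supplying the weights needed for the swaps, the frozen semi-infinite strip contributing weight $1$, and the deposited $R$-matrix vertices assembling into the six vertex partition function on the jagged domain cut by $\mu(S)$ with outgoing boundary $T$. The geometric identification you flag as the main obstacle is precisely the step the paper settles by noting that the length of the horizontal line carrying $b_j$ equals the number of black rows above that red row in the starting configuration, which pins down the jagged edge as $\mu(S)$.
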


\begin{proof}
The proof almost directly repeats that of Theorem \ref{thm:min}, with one exception: in the lattice representation of ${\rm Prob}^{S}_{M,N}([\la^{(1)} * \cdots * \la^{(M+N-1)}] = \nu/\mu)$, the external left edges comprise an arbitrary binary string $S$ of unoccupied/occupied states. Also, the prefactor $1/\Pi^{S}(a_1,\dots,a_M;b_1,\dots,b_N)$ in \eqref{eq:expectation-skew} contains $(1- a_i b_j)/(1-t a_i b_j)$ if and only if the row with parameter $a_i$ is higher than the row with parameter $b_j$. This ensures that $1/\Pi^{S}(a_1,\dots,a_M;b_1,\dots,b_N)$ contains exactly all factors required to transfer the red rows to the top of the lattice, and the black rows to the bottom, by iterating the Yang--Baxter equation \eqref{graph-exchange}. We therefore find that
\begin{multline}
{\rm Prob}^{S}_{M,N}(
[\la^{(1)} * \cdots * \la^{(M+N-1)}] = \nu/\mu)
\\
=
\begin{tikzpicture}[scale=0.8,baseline=(current bounding box.center),>=stealth]
\foreach\x in {0,...,6}{
\draw[lgray,line width=10pt] (\x,-2) -- (\x,3);
}
\foreach\y in {-1,...,2}{
\draw[lgray,thick] (-1,\y) -- (7,\y);
}
\foreach\x in {0,...,6}{
\draw[lred,line width=10pt] (\x,3) -- (\x,7);
}
\foreach\y in {4,...,6}{
\draw[lred,thick] (-1,\y) -- (7,\y);
}
\draw[thick, dotted] (7,6) -- (9.4,3.6);
\draw[thick, dotted] (7,5) -- (9.9,2.1);
\draw[thick, dotted] (7,4) -- (9.9,1.1);
\draw[ultra thick,->] (9.5,2.5) -- (10,2);
\draw[thick, dotted] (7,2) -- (9.4,4.4);
\draw[thick, dotted] (7,1) -- (9.4,3.4);
\draw[thick, dotted] (7,0) -- (9.9,2.9);
\draw[thick, dotted] (7,-1) -- (9.9,1.9);
\draw[ultra thick,->] (9,3) -- (9.5,3.5); \draw[ultra thick,->] (9,4) -- (9.5,4.5);
\draw[ultra thick,lred] (8.5,4.5) -- (9,4);
\draw[ultra thick,red] (9,4) -- (8.5,3.5);
\draw[ultra thick,lred] (8.5,3.5) -- (9,3);
\draw[ultra thick,lred] (9,3) -- (9.5,2.5);
\draw[ultra thick,red] (9.5,2.5) -- (9,2);
\draw[ultra thick,lred] (9,2) -- (9.5,1.5);
\draw[ultra thick,red] (9.5,1.5) -- (9,1);
\node[below] at (9,0.5) {\color{red} $S$};
\draw[thin,rounded corners,->] (9,5.5) -- (12,2.5) -- (10,0.5) node[below] {$T$};
\draw[ultra thick,->] (-1,6) -- (7,6); \draw[ultra thick,->] (-1,5) -- (7,5); \draw[ultra thick,->] (-1,4) -- (7,4);
\node[left] at (-1,2) {$a_1$};
\node[left] at (-1,1) {$a_2$};
\node[left] at (-1,0) {$a_3$};
\node[left] at (-1,-1) {$a_4$};
\node[left] at (-1,6) {$b_3$};
\node[left] at (-1,5) {$b_2$};
\node[left] at (-1,4) {$b_1$};
\node[below] at (6,-2) {$1$};
\node[below] at (5,-2) {$2$};
\node[below] at (4,-2) {$3$};
\node[below] at (1,-2) {$\longleftarrow$};
\node[below] at (3,-2) {$\cdots$};
\node[below] at (0,-2) {$\infty$};
\end{tikzpicture}
\end{multline}
We can then use the same arguments as before to deduce that the semi-infinite lattice appearing above is completely frozen, with weight 1. Deleting it, we are left with a partition function of the six vertex model on a jagged domain (rotated clockwise by 45 degrees). The horizontal lines of this partition function carry the spectral parameters $b_1,\dots,b_N$, numbered from bottom to top. Given that each of its vertices arose from a single application of \eqref{graph-exchange}, we conclude that the length of the $i$-th horizontal line is equal to the number of black rows which sit above the red row with parameter $b_i$ in the starting configuration \eqref{eq:expectation-skew}. Specifying the length of all the horizontal lines specifies the jagged edge uniquely; it is then a simple matter to verify that the binary string determining the jagged edge is exactly $S$ (shown in red above). Meanwhile, the binary string $T$, which encoded the external right edge states in the starting configuration, is transferred directly to the edge states along the jagged domain (preserving the order of $T$). We conclude that
\begin{multline}
{\rm Prob}^{S}_{M,N}(
[\la^{(1)} * \cdots * \la^{(M+N-1)}] = \nu/\mu)
\\
=
\begin{tikzpicture}[scale=0.8,baseline=(current bounding box.center),>=stealth]
\foreach\x in {1,...,4}{
\node[below] at (\x,0) {$a_\x$};
}
\foreach\y in {1,...,3}{
\node[left] at (0,\y) {$b_\y$};
\draw[ultra thick,->,rounded corners] (0,\y) -- (1,\y);
}
\draw[thick, dotted] (1,0) -- (1,3.8); \draw[thick, dotted] (2,0) -- (2,2.8); \draw[thick, dotted] (3,0) -- (3,2.8); \draw[thick, dotted] (4,0) -- (4,1.8);
\draw[thick, dotted] (0,1) -- (4.8,1); \draw[thick, dotted] (0,2) -- (3.8,2); \draw[thick, dotted] (0,3) -- (1.8,3);
\draw[ultra thick,->] (1,3) -- (1,4);
\draw[ultra thick,->] (2,2) -- (2,3);
\draw[ultra thick,->] (3,2) -- (4,2);
\draw[thin,rounded corners,->] (1,4.5) -- (3.5,4.5) -- (3.5,2.5) -- (5.5,2.5) -- (5.5,1) node[below] {$T$};
\end{tikzpicture}
=
{\rm Prob}^{\rm 6V}_{M,N}
(\mathcal{O}(\sigma) = \nu/\mu).
\end{multline}

\end{proof}

\subsection{Return to Theorem \ref{th:equiv-distrib}}

Finally, we wish to translate Theorem \ref{thm:maj} into a statement about the distribution of the lengths of partitions in the Hall--Littlewood process and the distribution of the height function in the stochastic six vertex model.

\begin{proof}[Proof of Theorem \ref{th:equiv-distrib}]
Given a sequence of partitions $\la^{(1)}* \dots * \la^{(M+N-1)}$ arising from the Hall--Littlewood process, the random vector $\{ \lambda'_1(i,N,S) \}_{i=1}^{M+N-1}$ measures the width of the support $[\la^{(1)}* \dots * \la^{(M+N-1)}] = \nu(T)/\mu(S)$ as one walks along the frame of $\mu(S)$. More precisely,
\begin{align*}
\lambda'_1(i,N,S)
=
\frac{1}{2}
\sum_{j=1}^{i}
(S(j)-T(j)),
\quad
\text{for all}\
1 \leq i \leq M+N-1,
\end{align*}
where $T \in \mathcal{S}_{M,N}^{-}$ is the binary string described in Section \ref{sec:support}.

Similarly, given a state $\sigma$ of the six vertex model on the jagged domain obtained by cutting away $\mu(S)$, the random vector $\{y_i(S) - h(x_i(S)+1,y_i(S))\}_{i=1}^{M+N-1}$ measures the difference between the $y$-coordinate of the points $(x_i(S)+1,y_i(S))$ and the value of the height function there. Both can be easily calculated:
\begin{align*}
y_i(S)
=
N-\frac{1}{2}\sum_{j=1}^{i} (1-S(j)),
\quad
h(x_i(S)+1,y_i(S))
=
N-\frac{1}{2}\sum_{j=1}^{i}(1-T(j)),
\end{align*}
where $T$ is the binary string which encodes path occupation along the jagged edge, as explained in Sections \ref{sec:edge-rect} and \ref{sec:jagged}. Therefore,
\begin{align*}
y_i(S)-h(x_i(S)+1,y_i(S))
=
\frac{1}{2}
\sum_{j=1}^{i}
(S(j)-T(j)),
\quad
\text{for all}\
1 \leq i \leq M+N-1,
\end{align*}
and the random vectors $\{ \lambda'_1(i,N,S) \}_{i=1}^{M+N-1}$ and $\{y_i(S) - h(x_i(S)+1,y_i(S))\}_{i=1}^{M+N-1}$ are identically distributed if the random binary string $T$ is identically distributed in the two processes. The latter is the content of Theorem \ref{thm:maj}.

\end{proof}

\section{Hall--Littlewood RSK dynamics}
\label{sec:RSKdyn}

\subsection{Partition dynamics}
\label{sec:HL-dyn-part}

In this section we briefly recall a stochastic dynamics of RSK-type which is related to the Hall--Littlewood polynomials (while the conventional RSK is related to the Schur polynomials). This dynamics was first explicitly obtained in \cite{BufP}; in an implicit form it is present in \cite{BP2}.

Let $c_1, \dots, c_N, \dots$ be positive reals, and let $\mathcal P_1 (c_1), \dots, \mathcal P_N(c_N), \dots$ be a collection of independent Poisson processes in $\mathbb R_{\ge 0}$ with these intensities.

We will consider an evolution of a sequence of random partitions $\la^{(1)} (\tau) \subset \la^{(2)} (\tau) \subset \dots \subset \la^{(N)}(\tau) \subset \cdots$ in time $\tau \in \mathbb R_{\ge 0}$. We assume that for any $k \ge 1$ the partition $\la^{(k)} (\tau) = (\la^{(k)}_1 (\tau) \ge \la^{(k)}_2 (\tau) \ge \dots \ge \la^{(k)}_k (\tau))$ has exactly $k$ rows, some of which may have length 0. Also, let us assume that we have interlacing conditions:
$$
\la_{i+1}^{(m+1)} (\tau) \le \la_i^{(m)} (\tau) \le \la_i^{(m+1)} (\tau), \qquad \mbox{for any $m$ and any $i \le m$}.
$$
In such a sequence, we call a row\footnote{In what follows we do not distinguish between rows and their lengths.} $\la_i^{(m)} (\tau)$ \textit{blocked} if $\la_i^{(m)} (\tau) = \la_{i-1}^{(m-1)} (\tau)$. Otherwise the row is called \textit{free}. Note that for any $m$ the row $\la_1^{(m)} (\tau)$ is always free. We call the row $\la_j^{(m+1)} (\tau)$ the \textit{nearest neighbor} of $\la_i^{(m)} (\tau)$ if it is the smallest free row of $\la^{(m+1)}$ with the condition $j \le i$. See Figure \ref{fig:interlacing_intro} for an example.

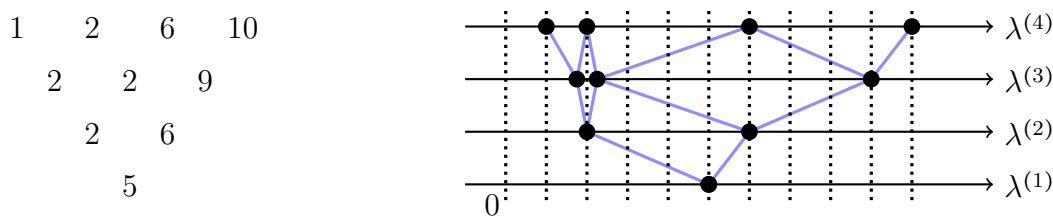
\begin{figure}[htbp]
\begin{center}
\begin{tabular}{rl}
\\ \rule{0pt}{30pt}
\raisebox{7pt}{\begin{tikzpicture}
[scale=1, very thick]
\def\sp{0.09};
\def\x{.5};
\def\y{.7};
\node at (0,0) {5};
\node at (\x,\y) {6};	
\node at (-\x,\y) {2};	
\node at (2*\x,2*\y) {9};	
\node at (0,2*\y) {2};	
\node at (-2*\x,2*\y) {2};	
\node at (-3*\x,3*\y) {1};	
\node at (-1*\x,3*\y) {2};	
\node at (1*\x,3*\y) {6};	
\node at (3*\x,3*\y) {10};	
\end{tikzpicture}}
&\hspace{60pt}
\begin{tikzpicture}
[scale=1, very thick]
\def\sp{0.09};
\def\x{.54};
\def\y{.7};
\def\opac{.45}
\def\wid{1.2}
\draw[line width=\wid, color=blue, opacity=\opac]
(\x,\y)--++(-\x,-\y)--++(-3*\x,\y);
\draw[line width=\wid, color=blue, opacity=\opac]
(4*\x,2*\y)--++(-3*\x,-\y)--++(-4*\x+3/2*\sp,\y)
--++(-3/2*\sp,-\y)--++(-3/2*\sp,\y);
\draw[line width=\wid, color=blue, opacity=\opac]
(5*\x,3*\y)--++(-\x,-\y)--++(-3*\x,\y)
--++(-4*\x+3/2*\sp,-\y)--++(-3/2*\sp,\y)
--++(-3/2*\sp,-\y)--++(-\x+3/2*\sp,\y);
\foreach \pt in
{
(0,0),
(-3*\x,\y), (\x,\y),
(-3*\x-\sp*3/2,2*\y), (-3*\x+\sp*3/2,2*\y), (4*\x,2*\y),
(-4*\x,3*\y), (-3*\x,3*\y), (1*\x,3*\y), (5*\x,3*\y),
}
{
\draw[fill] \pt circle (\sp);
}
\foreach \ll in {1,2,3,4}
{
\draw[->, thick] (-6*\x,\ll*\y-\y) -- (7*\x,\ll*\y-\y)
node[right] {$\la^{(\ll)}$};
}
\foreach \ver in {-5,...,5}
{
\draw[dotted] (\ver*\x,-.3*\y)--(\ver*\x,3.3*\y);
}
\node at (-5*\x-2*\sp,0-3*\sp) {0};
\end{tikzpicture}
\end{tabular}
\end{center}
\caption{A
sequence of partitions $\la^{(1)} \subset \la^{(2)} \subset \la^{(3)} \subset \la^{(4)} = (5) \subset (6,2) \subset (9,2,2) \subset (10,6,2,1)$ and
a visualization of its interlacing conditions. In $\la^{(4)} = (10,6,2,1)$ the rows of lengths $1,6,10$ are free, while the row of length $2$ is blocked. The nearest neighbor of $\la^{(3)}_3 =2$ is $\la^{(4)}_2 =6$. }
\label{fig:interlacing_intro}
\end{figure}

Initially, we set $\la_i^{(m)} (0) = 0$ for all $m$ and $i$. The partitions change only at jump times of the Poisson processes $\{ \mathcal P_i (c_i) \}_{i \ge 1}$ (notation: $\tau \in \mathcal P_i (c_i)$). The probability of the event that some real $\tau$ belongs to two independent Poisson processes simultaneously is zero, so let us assume that $\tau \in \mathcal P_k (c_k)$ for a unique $k$, and describe the evolution of the partitions at this moment.

1) Partitions $\la^{(1)} (\tau), \dots,\la^{(k-1)} (\tau)$ do not change; in each of the partitions $\la^{(k)} (\tau),\la^{(k+1)} (\tau), \dots$ exactly one row increases by 1.

2) In the partition $\la^{(k)} (\tau)$ the smallest free row increases by 1.

3) Modifications propagate to levels $m=k,k+1, \dots,$ as follows:

3a) If the row $\la_i^{(m)} (\tau)$ has increased by one, and $\la_i^{(m)} (\tau) = \la_i^{(m+1)} (\tau)$ before the change, then the row $\la_i^{(m+1)} (\tau)$ increases by one.

3b) Otherwise, if the row $\la_i^{(m)} (\tau)$ has changed, then at the next level either $\la_i^{(m+1)} (\tau)$ or $\la_{i+1}^{(m+1)} (\tau)$ will change. The choice of which one is random; let $R$ and $L$ be corresponding probabilities. These probabilities depend on the array, see below.

4) It remains to determine $R$ and $L$. Let $D$ be the number of rows in $\la^{(m)} (\tau)$ that have the same length as $\la_i^{(m)} (\tau)$ before the change (excluding $\la_i^{(m)} (\tau)$ itself). Then due to the interlacing conditions the number of rows in $\la^{(m+1)} (\tau)$ that are equal to $\la_i^{(m)} (\tau)$ before the change can be either $D$ or $D+1$, see Figure \ref{fig:pushing}.

4a) If it is $D+1$ then $R=1-t$ and $L=t$.

4b) If it is $D$, then $R=\dfrac{1-t}{1-t^{D+1}}$ and $L=1 -R=1-\dfrac{1-t}{1-t^{D+1}}$.

Note that $D$ is allowed to be zero. 

\begin{figure}[htbp]
\begin{center}
	\begin{tabular}{ll}
		\begin{tikzpicture}[
		    scale=1.3,
		    axis/.style={thick, ->, >=stealth'},
		    block/.style ={rectangle, draw=red,
			align=center, rounded corners, minimum height=1em}]
		    \def\y{.7}
		    \draw[axis] (0,0) -- (5,0) node(xline)[right]{$\la^{(m)}$};
		    \draw[axis] (0,\y) -- (5,\y) node(xline)[right]{$\la^{(m+1)}$};
		    \foreach \hh in {.5, 1.5, 2.5, 3.5, 4.5}
		    {
		    	\draw[densely dotted, thick, opacity=.5] (\hh,-.13) -- (\hh,\y+.13);
		    }
		    \def\sp{.13};
		    \def\opac{.25}
			\draw[line width=1.3, color=blue, opacity=\opac]
			(0,2*\y/3)--(.5,\y)--(2.5-\sp,0)--++(\sp/2,\y)--++(\sp/2,-\y)
			--++(\sp/2,\y)--(3.5-\sp/2,0)--++(\sp/2,\y)--++(\sp/2,-\y)
			--(4.5,\y)--(5,3*\y/4);
		    \foreach \pt in
		    {(2.5-\sp,0),(2.5,0),
		    (3.5+\sp/2,0),(3.5-\sp/2,0),
		    (.5,\y),(2.5-\sp/2,\y),(2.5+\sp/2,\y),(3.5,\y),
		    (4.5,\y)}
		    {
		    	\draw[fill] \pt circle (1.4pt);
		    }
		    \draw[->,densely dotted, very thick, color = blue]
	    	(2.5+\sp-.03,-0.03) to [in=180, out=-60] (3,-.4) to [in=-120, out=0] (3.5-\sp/2-.03,-0.05);
	    	\draw[->,densely dotted, ultra thick]
	    	(3.5-\sp/2-.03,-0.05) to [in=-120, out=60] (4.5-.03,\y-0.05);
	    	\draw[->,densely dotted, ultra thick]
	    	(3.5-\sp/2,0) to [in=-80, out=170] (2.5+\sp/2+.02,\y-0.05);
	    	\node at (2.5-\sp,-.26) {$D$};
	    	\node at (2.5,\y+.3) {$D$};
	    	\node at (3.2, -.6) {\scriptsize\color{blue}just increased};
	    	\node at (3.35,.3) {$\la^{(m)}_i$};
	    	\draw (4.3,\y+.5) node[block] (r) {$\frac{1-t}{1-t^{D+1}}$};
	    	\draw (1.4,\y+.5) node[block] (l) {$1-\frac{1-t}{1-t^{D+1}}$};
	    	\draw[color=red] (l.south) -- (2.86,.21);
	    	\draw[color=red] (r.south) -- (4.13,.35);
		\end{tikzpicture}
		&\hspace{10pt}
		\begin{tikzpicture}[
		    scale=1.3,
		    axis/.style={thick, ->, >=stealth'},
		    block/.style ={rectangle, draw=red,
			align=center, rounded corners, minimum height=1em}]
		    \def\y{.7}
		    \draw[axis] (0,0) -- (5,0) node(xline)[right]{$\la^{(m)}$};
		    \draw[axis] (0,\y) -- (5,\y) node(xline)[right]{$\la^{(m+1)}$};
		    \foreach \hh in {.5, 1.5, 2.5, 3.5, 4.5}
		    {
		    	\draw[densely dotted, thick, opacity=.5] (\hh,-.13) -- (\hh,\y+.13);
		    }
		    \def\sp{.13};
		    \def\opac{.25}
			\draw[line width=1.3, color=blue, opacity=\opac]
			(0,\y/4)--(2.5-3*\sp/2,\y)--(2.5-\sp,0)--++(\sp/2,\y)--++(\sp/2,-\y)
			--++(\sp/2,\y)--(3.5-\sp/2,0)--++(\sp/2,\y)--++(\sp/2,-\y)
			--(4.5,\y)--(5,3*\y/4);
		    \foreach \pt in
		    {(2.5-\sp,0),(2.5,0),
		    (3.5+\sp/2,0),(3.5-\sp/2,0),
		    (2.5-3*\sp/2,\y),(2.5-\sp/2,\y),(2.5+\sp/2,\y),(3.5,\y),
		    (4.5,\y)}
		    {
		    	\draw[fill] \pt circle (1.4pt);
		    }
		    \draw[->,densely dotted, very thick, color = blue]
	    	(2.5+\sp-.03,-0.03) to [in=180, out=-60] (3,-.4) to [in=-120, out=0] (3.5-\sp/2-.03,-0.05);
	    	\draw[->,densely dotted, ultra thick]
	    	(3.5-\sp/2-.03,-0.05) to [in=-120, out=60] (4.5-.03,\y-0.05);
	    	\draw[->,densely dotted, ultra thick]
	    	(3.5-\sp/2,0) to [in=-80, out=170] (2.5+\sp/2+.02,\y-0.05);
	    	\node at (2.5-\sp,-.26) {$D$};
	    	\node at (2.5-\sp/2,\y+.3) {$D+1$};
	    	\node at (3.2, -.6) {\scriptsize\color{blue}just increased};
	    	\node at (3.35,.3) {$\la^{(m)}_{i}$};
	    	\draw (4.9,\y+.5) node[block] (r) {$1-t$};
	    	\draw (3.3,\y+.5) node[block] (l) {$t$};
	    	\draw[color=red] (l.south) -- (2.86,.21);
	    	\draw[color=red] (r.west) -- (4.13,.35);
		\end{tikzpicture}
	\end{tabular}
\end{center}
\caption{Pushing and pulling probabilities $R$ and $L$ in the sampling algorithm
(note that $D$ can be zero).}
\label{fig:pushing}
\end{figure}
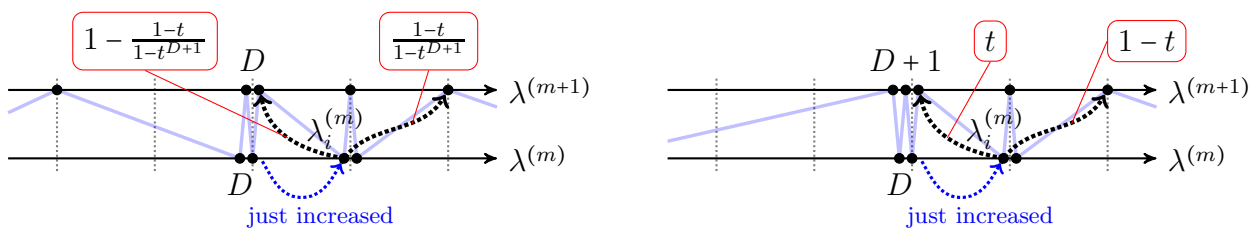

Here is an example of possible changes of the partitions from Figure \ref{fig:interlacing_intro} at a time moment $\tau \in \mathcal P_2 (c_2)$, \ie when a signal comes to the partition $\la^{(2)}$ (boxed numbers indicate rows which have increased).

\begin{align*}
	\framebox{\scalebox{.9}{
	\begin{tikzpicture}
	[scale=.9, thick]
	\def\y{.7}
	\def\t{.27}
	\def\g{11.5}
	\node at (\g,0) {5};
	\node at (\g+\t,\y) {6};\node at (\g-\t,\y) {\framebox{3}};
	\node at (\g+2*\t,2*\y) {9};\node at (\g-0*\t,2*\y) {\framebox{3}};\node at (\g-2*\t,2*\y) {2};
	\node at (\g+3*\t,3*\y) {10};\node at (\g+1*\t,3*\y) {\framebox{7}};\node at (\g-1*\t,3*\y) {2};\node at (\g-3*\t,3*\y) {1};
	\end{tikzpicture}}}
	\raisebox{30pt}{\quad\mbox{with prob. $\dfrac{1-t}{1-t^2}$,}}
	\hspace{50pt}
	\framebox{\scalebox{.9}{
	\begin{tikzpicture}
	[scale=.9, thick]
	\def\y{.7}
	\def\t{.27}
	\def\g{11.5}
	\node at (\g,0) {5};
	\node at (\g+\t,\y) {6};\node at (\g-\t,\y) {\framebox{3}};
	\node at (\g+2*\t,2*\y) {9};\node at (\g-0*\t,2*\y) {\framebox{3}};\node at (\g-2*\t,2*\y) {2};
	\node at (\g+3*\t,3*\y) {10};\node at (\g+1*\t,3*\y) {6};\node at (\g-1*\t,3*\y) {\framebox{3}};\node at (\g-3*\t,3*\y) {1};
	\end{tikzpicture}}}
	\raisebox{30pt}{\quad\mbox{with prob. $1 - \dfrac{1-t}{1-t^2}$.}}
\end{align*}

Let us explain how this dynamics is related to the Hall--Littlewood process. In this section the parameters $\{ c_i \}$ play the role of parameters $\{ a_i \}$ in the definitions of Sections \ref{sec:general-HL} and \ref{sec:HL-process-usual}.

\begin{proposition}
\label{prop:dynamics-1}
For any $\tau>0$ and any positive integers $n_1, \dots, n_k$, the partitions $\lambda^{(n_1)} (\tau), \dots, \la^{(n_k)} (\tau)$ are distributed as the correspondingly numbered partitions of the ascending Hall--Littlewood process determined by parameters $\{ c_i \}_{i \ge 1}$ and the Plancherel specialization with parameter $\tau$ (see Section \ref{sec:HL-process-usual}).
\end{proposition}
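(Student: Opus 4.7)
The plan is to reduce the proposition to a matching of infinitesimal generators, and then verify that matching via the Hall--Littlewood branching rule. First, since the Poisson clocks $\mathcal{P}_i(c_i)$ for $i > \max_j n_j$ do not affect $\la^{(n_1)}(\tau), \dots, \la^{(n_k)}(\tau)$, I would reduce to a finite truncation $(\la^{(1)}(\tau), \dots, \la^{(N)}(\tau))$ and aim to show that its joint distribution equals the ascending Hall--Littlewood process from \eqref{eq:HL-proc-def-1/5} with parameters $c_1, \dots, c_N$ and Plancherel specialization $\rho_\tau$.

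Both distributions are the point mass on the empty array at $\tau = 0$, so it is enough to match their forward Kolmogorov equations. The dynamics has infinitesimal generator $\mathcal{L} = \sum_{k=1}^N c_k \mathcal{L}_k$, where $\mathcal{L}_k$ implements a single $\mathcal{P}_k$-tick together with the push/pull cascade of steps 2)--4). On the Hall--Littlewood side, differentiating $P_{\la^{(1)}}(c_1) P_{\la^{(2)}/\la^{(1)}}(c_2) \cdots P_{\la^{(N)}/\la^{(N-1)}}(c_N) Q_{\la^{(N)}}(\rho_\tau) / Z(\tau)$ in $\tau$ and re-expanding via the branching rule produces a generator $\mathcal{M}$ that acts by adding a single box somewhere in the array and redistributing probability mass consistent with the HL branching coefficients. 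The proposition reduces to the operator identity $\mathcal{L} = \mathcal{M}$.

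The main combinatorial obstacle is verifying $\mathcal{L}_k = \mathcal{M}_k$ level by level. Using the explicit formula
$P_{\la/\mu}(a) = a^{|\la|-|\mu|} \prod_{i:\, m_i(\la)+1 = m_i(\mu)} (1 - t^{m_i(\mu)})$
recalled in Section~5, a case analysis based on whether $D$ or $D+1$ rows at level $m+1$ match (before the jump) the just-updated row of $\la^{(m)}$ reproduces exactly the push/pull probabilities $R = (1-t)/(1-t^{D+1})$ and $R = 1-t$. Chaining these single-level matchings along the push/pull cascade from level $k$ up through level $N$ yields $c_k \mathcal{L}_k = c_k \mathcal{M}_k$ for each $k$, and summing over $k$ gives $\mathcal{L} = \mathcal{M}$. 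A structurally cleaner alternative, following \cite{BufP, BP2}, uses induction on $N$ together with the Gibbs/intertwining property of the HL process: one verifies that the dynamics commutes with the Markov link projecting the $N$-level HL process onto its $(N{-}1)$-level marginal, thereby reducing the entire argument to the single-level base case $\la^{(1)}_1(\tau) \sim \mathrm{Poisson}(c_1 \tau)$, which matches the one-partition HL measure with Plancherel specialization (up to the standard $(1-t)$-rescaling of the Plancherel parameter).
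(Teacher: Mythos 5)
The paper itself does not supply an argument here: its proof of Proposition \ref{prop:dynamics-1} is a one-line citation to \cite[Theorem 1.10]{BufP}. Of your two routes, the second one (induction over levels via the Gibbs/intertwining structure, i.e.\ that the multivariate dynamics restricts to the correct univariate dynamics on each level and preserves the Gibbs property with respect to the links $\Lambda_k^{k+1}$) is essentially the proof in \cite{BufP}, \cite{BP2} — the same machinery the paper later spells out when proving Proposition \ref{prop:dynamics-staircase} — and, carried out carefully, it is correct. But as written you only invoke it by citation, so it does not yet constitute a proof.

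Your primary route has a genuine gap at its central step. Differentiating the weight $P_{\lambda^{(1)}}(c_1)\cdots P_{\lambda^{(N)}/\lambda^{(N-1)}}(c_N)\,Q_{\lambda^{(N)}}(\rho_\tau)/Z(\tau)$ in $\tau$ only touches the top-level factor $Q_{\lambda^{(N)}}(\rho_\tau)$ (and $Z$), producing one-box growth of $\lambda^{(N)}$ weighted by one-box skew Hall--Littlewood coefficients; it does not by itself ``produce a generator $\mathcal{M}$'' on arrays, and in particular it admits no canonical decomposition $\mathcal{M}=\sum_k c_k\mathcal{M}_k$ indexed by which clock rang. On the dynamics side, a single ring of $\mathcal{P}_k$ changes \emph{every} level $m\ge k$ simultaneously (one box per level), so the identity you must verify is the forward equation $\tfrac{d}{d\tau}\mu_\tau=\mathcal{L}^{*}\mu_\tau$, whose right-hand side is a sum over all cascade trajectories leading into a given array; collapsing that sum back to the top-level one-box derivative requires exactly the intertwining/commutation identities between the univariate generators and the links — i.e.\ the content of your ``alternative'' route. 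The $D$ versus $D{+}1$ case analysis does correctly reproduce the local probabilities $R$ and $L$ as ratios of one-variable skew $P$'s, but ``chaining these single-level matchings along the cascade'' is precisely where all the work lies, and nothing in the sketch justifies it. A smaller point: with the normalization of the Plancherel specialization used in the paper (via \cite{BC}, where $Q_{(n)}(\rho_\tau)=\tau^n/n!$), the level-one marginal of the Hall--Littlewood process is Poisson with rate exactly $c_1\tau$, matching the dynamics with \emph{no} $(1-t)$-rescaling; since the proposition asserts the parameter is $\tau$ on the nose, this should be verified rather than hedged, because if a rescaling were actually needed the statement you are proving would be false as written.
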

\begin{proof}
This is \cite[Theorem 1.10]{BufP}.
\end{proof}

For positive integers $M,N$ let $S \in \mathcal{S}_{M,N}^{+}$ be a string of signs as defined in Section \ref{sec:general-HL}, and let $\hat \tau_1 \le \hat \tau_2 \le \dots \le \hat \tau_N$ be positive reals. We write ${\rm next} (\hat \tau_i)$ for $\hat \tau_{i+1}$.
Analogously to the construction in Section \ref{sec:more-gen-th}, let $\{ n_i (S) \}_{i=0}^{M+N}$, $\{ \tau_i (S) \}_{i=1}^{M+N}$ be collections such that

\textbf{1.} $n_0 (S)=M$, $n_{M+N} (S)=0$, $\tau_1 (S) = \hat \tau_1$, $\tau_{M+N} (S) = \hat \tau_N$. 

\textbf{2.} If $S(i)=+$, then $n_i (S)= n_{i-1} (S)$, $\tau_i (S) = {\rm next}(\tau_{i-1} (S))$. If $S(i)=-$, then $n_i (S)= n_{i-1} (S)-1$, $\tau_i (S) = \tau_{i-1} (S)$.

\begin{proposition}
\label{prop:dynamics-staircase}
For any positive integers $M,N$, any reals $0< \hat \tau_1 \le \hat \tau_2 \le \dots \le \hat \tau_N$, and any string $S \in \mathcal{S}_{M,N}^{+}$, the partitions $\lambda^{(n_1 (S) )} (\tau_1 (S) ), \dots, \la^{(n_{M+N-1} (S)) } (\tau_{M+N-1} (S))$ are distributed as the correspondingly numbered partitions of the Hall--Littlewood process determined by the string $S$, parameters $\{ c_i \}_{i \ge 1}$, $\{ \hat \tau_i \}$, and the Plancherel specializations (see the end of Section \ref{sec:general-HL}).
\end{proposition}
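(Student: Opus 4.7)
The strategy is to extend Proposition \ref{prop:dynamics-1} from the single-time ascending setting to the full staircase by combining the Markov property of the RSK dynamics in the time variable $\tau$ with the compatibility between Plancherel time evolution and Hall--Littlewood branching in the level variable $n$. Geometrically, the observations $\lambda^{(n_i(S))}(\tau_i(S))$ trace a down-right path in the $(n,\tau)$-plane whose elementary moves are either a horizontal step (time advance at fixed $n$, corresponding to $S(i)=+$) or a vertical step (level drop at fixed $\tau$, corresponding to $S(i)=-$). By the Markov property of the dynamics in $\tau$, the joint law along this path factorizes as a product of single-step conditional distributions, so the proof reduces to identifying each conditional with the corresponding weight in \eqref{eq:HL-proc-def-2/55}.

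For a vertical step $\lambda^{(n)}(\tau) \to \lambda^{(n-1)}(\tau)$, the ascending Hall--Littlewood structure of $(\lambda^{(1)}(\tau),\dots,\lambda^{(n)}(\tau))$ furnished by Proposition \ref{prop:dynamics-1} yields a conditional law proportional to $P_{\lambda^{(n)}/\lambda^{(n-1)}}(c_n)$, matching the $P_{\lambda^{(i)}/\lambda^{(i-1)}}(a_{p(i)})$ factor with the identification $a_i = c_i$. For a horizontal step $\lambda^{(n)}(\tau) \to \lambda^{(n)}(\tau')$, the marginal dynamics at level $n$ is, by construction of the RSK dynamics and essentially contained in the proof of \cite[Theorem 1.10]{BufP}, a Plancherel $Q$-update: the transition density is proportional to $Q_{\lambda^{(n)}(\tau')/\lambda^{(n)}(\tau)}$ evaluated at the Plancherel specialization of parameter $\tau'-\tau$, matching the $Q_{\lambda^{(i-1)}/\lambda^{(i)}}(\rho)$ factor for the appropriate $\rho$ in the list $\{\rho_j\}$.

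The main obstacle is verifying that these local identifications reassemble correctly, together with the normalization constants and the boundary Plancherel specialization (which accounts for the evolution from time $0$ up to the first observation at $\hat\tau_1$), into the full measure \eqref{eq:HL-proc-def-2/5}. I would organize the verification by induction on the number of inversions of $S$ relative to the purely ascending string $(+,\dots,+,-,\dots,-)$: swapping an adjacent pair $(-,+)$ into $(+,-)$ in $S$ corresponds to commuting a level drop past a time advance, and the equality of the two orderings at the level of conditional weights follows from the skew Cauchy--type identity for Hall--Littlewood polynomials. The ascending base case reduces to Proposition \ref{prop:dynamics-1} applied at the terminal time $\hat\tau_N$, at which point the induction closes. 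An alternative route worth exploring is to use the $t$-boson row-to-row operator machinery of Section \ref{sec:row-ops}: the joint distribution can be written as an expectation of an ordered product of $A,B,\bar C,\bar D$ operators, and exchanging adjacent operators via \eqref{eq:exchange2} provides an operator-level version of the same induction.
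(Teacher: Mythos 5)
Your two local identifications are exactly the ingredients the paper uses, but the way you assemble them contains genuine gaps. First, ``the Markov property of the dynamics in $\tau$'' is not by itself enough to factorize the joint law along a down-right path into single-step conditionals: the later observations sit at \emph{lower} levels than the earlier ones, so you must also know that conditioning on a higher-level observation at an earlier time does not alter the law of the subsequent evolution of the lower levels. This is the autonomy property (the restriction of the dynamics to levels $\leq k$ does not depend on the levels $>k$), which together with the Gibbs property at fixed times and the univariate-marginal property is precisely what the paper imports from \cite{BP2} (Proposition 5.3; see also \cite[Theorem 4.6]{BufP}) and then exploits in the alternating steps \eqref{eq:univ-dyn-step} and \eqref{eq:gibbs-step}; you never isolate it. Relatedly, Proposition \ref{prop:dynamics-1} only gives the \emph{unconditional} fixed-time law, whereas your vertical step needs the conditional law of $\la^{(n-1)}(\tau)$ given all earlier path observations, and passing from one to the other again uses autonomy plus conditional Gibbsianness. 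Note also that the single-step conditionals are not literally the weights \eqref{eq:HL-proc-def-2/55}: the Gibbs link is proportional to $P_{\la^{(n-1)}}(c_1,\dots,c_{n-1})\,P_{\la^{(n)}/\la^{(n-1)}}(c_n)$ and the univariate transition to $Q_{\la/\bar\la}({\rm Pl}_{\tilde\tau})\,P_{\la}(c_1,\dots,c_n)/P_{\bar\la}(c_1,\dots,c_n)$; the $P$-prefactors only cancel telescopically along the whole path, which is exactly the bookkeeping the chains \eqref{eq:univ-dyn-step}--\eqref{eq:gibbs-step} carry out.

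Second, the proposed induction on inversions does not close as described. The inductive hypothesis for a string $S'$ is a statement about the observations along the $S'$-path and contains no information about the law of the corner observation that appears only on the $S$-path; hence the swap cannot be performed from the hypothesis plus the skew Cauchy identity alone---the latter only verifies consistency on the Hall--Littlewood side of the equality. Each swap therefore requires the same dynamical input (univariate Plancherel transition plus Gibbs conditioning, justified by autonomy) as a direct step, at which point the induction is superfluous and one may as well compute sequentially along the path, which is what the paper does. The base case is also misstated: for the ascending string the observation points include the top level at all $N$ times $\hat\tau_1,\dots,\hat\tau_N$, so it is not covered by Proposition \ref{prop:dynamics-1} ``applied at the terminal time''; it already requires the univariate Plancherel evolution of the top-level marginal. (Your alternative via the $t$-boson exchange relations is not what is needed here either; the paper uses that machinery only for Theorem \ref{th:equiv-distrib}, not for the dynamical statement.)
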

\begin{proof}

This claim follows from the general machinery of RSK-type dynamics for Macdonald processes developed in \cite{BP2}. Let us provide details of this derivation.

For partitions $\bar \lambda \subset \la$ define
$$
\Lambda_k^{k+1} ( \bar \la, \la) := \frac{ P_{\bar \la} (c_1, \dots, c_k)}{ P_{\la} (c_1, \dots, c_{k+1})} P_{\la / \bar \la} (c_{k+1}).
$$
Let $\mathcal P_N$ be a collection of partitions $\mu^{(1)} \subset \mu^{(2)} \subset \dots \subset \mu^{(N)}$ such that each $\mu^{(k)}$ has no more than $k$ rows. A probability measure $\mathcal{M}$ on $\mathcal P_N$ is called a \textit{Gibbs measure} (with respect to the Hall--Littlewood process with parameters $c_1, \dots, c_N$) if
$$
\mathcal{M} (\mu^{(1)} \subset \mu^{(2)} \subset \dots \subset \mu^{(N)}) = \mathcal{M}_{(N)} (\mu^{(N)}) \prod_{k=1}^{N-1} \Lambda_k^{k+1} ( \mu^{(k)}, \mu^{(k+1)}),
$$
where $\mathcal{M}_{(N)}$ is the projection of $\mathcal{M}$ onto $\mu^{(N)}$.

For partitions $\bar \lambda \subset \la$ consisting of no more than $k$ rows and such that $\la / \bar \la$ has exactly one box let
$$
Q_k (\bar \la \to \la) := \frac{P_{\la} (c_1, \dots, c_k)}{P_{\bar \la} (c_1, \dots, c_k)} \psi'_{\bar \la / \la},
$$
where $\psi'_{\bar \la / \la}$ is a constant given by \cite[Chapter 6, Equation 6.24.iv]{M}. A \textit{univariate dynamics} on partitions with no more than $k$ rows is a continuous time dynamics which has jump rate probabilities $Q_k$. A key property of this dynamics is that the transition probability over a time period of length $\tilde \tau>0$ has a form
$$
\mathbf P_{\tilde \tau} ( \bar \la \to \la) := Q_{\la / \bar \la} ({\rm Pl}_{\tilde \tau}) \frac{P_{\la} (c_1, \dots, c_k)}{P_{\bar \la} (c_1, \dots, c_k)},
$$
where ${\rm Pl}_{\tilde \tau}$ is the Plancherel specialization with parameter $\tilde\tau$.

Now we are able to formulate three essential properties of this dynamics:

1) For any $\tau$ and $k$ the distribution of $\la^{(1)} (\tau) \subset \dots \subset \la^{(k)} (\tau)$ is Gibbs.

2) For any $k$ the restriction of our dynamics to $\la^{(k)} (\tau)$ is univariate (with parameters $c_1, \dots, c_k$).

3) The transition probability $\mathbf P ( \la^{(k)} (\tau) \to \la^{(k)} (\tau + \tilde \tau))$ does not depend on partitions $\la^{(l)}$ with $l>k$.

These claims are established in \cite[Proposition 5.3]{BP2} (see also \cite[Theorem 4.6]{BufP}). Also, the third property directly follows from the definition of the dynamics.

We proceed to the proof of the proposition. To simplify notations, let us consider the case of $M=N$ and $S=(+,-,+,-, \dots, +,-)$ first. For such a string we have $\tau_1 (S) = \hat \tau_1$, ..., $\tau_N (S) = \hat \tau_N$; we omit the dependence of $\tau$ on $S$ in notations.

By Proposition \ref{prop:dynamics-1} the probability of the event $(\la^{(N)} (\tau_1) = \mu^{(N)}, \la^{(N-1)} (\tau_1) = \tilde \mu^{(N-1)} )$ is proportional to
$$
Q_{\mu^{(N)}} ( \mathrm{Pl}_{\tau_1}) P_{\mu^{(N)} / \tilde \mu^{(N-1)}} (a_N) P_{\tilde \mu^{(N-1)}} (c_1, \dots, c_{N-1}).
$$
By property 2) the evolution on the level $(N-1)$ from the moment $\tau_1$ till the moment $\tau_2$ is univariate. Using this and property 3), we deduce that the probability of the event $(\la^{(N)} (\tau_1) = \mu^{(N)}, \la^{(N-1)} (\tau_1) = \tilde \mu^{(N-1)}, \la^{(N-1)} (\tau_2) = \mu^{(N-1)})$ is proportional to
\begin{multline}
\label{eq:univ-dyn-step}
Q_{\mu^{(N)}} ( \mathrm{Pl}_{\tau_1}) P_{\mu^{(N)} / \tilde \mu^{(N-1)}} (c_N) P_{\tilde \mu^{(N-1)}} (c_1, \dots, c_{N-1}) \mathbf P_{\tau_2-\tau_1} ( \tilde \mu^{(N-1)} \to \mu^{(N-1)}) \\ = Q_{\mu^{(N)}} ( \mathrm{Pl}_{\tau_1}) P_{\mu^{(N)} / \tilde \mu^{(N-1)}} (c_N) Q_{\mu^{(N-1)} / \tilde \mu^{(N-1)} } (\mathrm{Pl}_{\tau_2 - \tau_1}) P_{\mu^{(N-1)}} (c_1, \dots, c_{N-1}).
\end{multline}
Applying Gibbs property 1) for time $\tau_2$, we obtain that the probability of the event $(\la^{(N)} (\tau_1) = \mu^{(N)}, \la^{(N-1)} (\tau_1) = \tilde \mu^{(N-1)}, \la^{(N-1)} (\tau_2) = \mu^{(N-1)}, \la^{(N-2)} (\tau_2) = \tilde \mu^{(N-2)} )$ is proportional to
\begin{multline}
\label{eq:gibbs-step}
Q_{\mu^{(N)}} ( \mathrm{Pl}_{\tau_1}) P_{\mu^{(N)} / \tilde \mu^{(N-1)}} (c_N) Q_{\mu^{(N-1)} / \tilde \mu^{(N-1)} } (\mathrm{Pl}_{\tau_2 - \tau_1}) P_{\mu^{(N-1)}} (c_1, \dots, c_{N-1}) \Lambda_{N-2}^{N-1} ( \tilde \mu^{(N-2)} , \mu^{(N-1)} ) \\ = Q_{\mu^{(N)}} ( \mathrm{Pl}_{\tau_1}) P_{\mu^{(N)} / \tilde \mu^{(N-1)}} (c_N) Q_{\mu^{(N-1)} / \tilde \mu^{(N-1)} } (\mathrm{Pl}_{\tau_2 - \tau_1}) P_{\mu^{(N-1)} / \tilde \mu^{(N-2)}} (c_{N-1}) P_{\tilde \mu^{(N-2)}} (c_1, \dots, c_{N-2}).
\end{multline}
Iterating steps of the form \eqref{eq:univ-dyn-step} and \eqref{eq:gibbs-step} for all levels, we arrive at the statement of the proposition. 
For a general string $S$, one applies the step \eqref{eq:univ-dyn-step} for each plus in the string, and the step \eqref{eq:gibbs-step} for each minus in $S$; this yields the definition of the required Hall--Littlewood process very similarly to what was done above.

\end{proof}

\subsection{Dynamics on sets}
\label{sec:dyn-sets}

Let us give an equivalent form of the dynamics above, which will bring us closer to the combinatorics of vertex models.

Consider sets $\{ V_i (\tau) \}_{i \in \mathbb Z_{\ge 1}; \tau \in \mathbb R_{\ge 0}}$; each $V_i (\tau)$ is a subset of $\mathbb Z_{\ge 0}$. Let $h^{(r)} (\tau;m)$ be the number of sets from $V_1 (\tau), \dots, V_m (\tau)$ that contain the element $r$. Obviously, the sets $\{ V_i (\tau) \}_{i \in \mathbb Z_{\ge 1}}$ and $\{ h^{(r)} (\tau;m) \}_{m \ge 1; r \ge 0}$ determine each other for any fixed $\tau$.

Initially, we set $V_i (0) = \mathbb Z_{\ge 0} $ for all $i$. The sets change only at times which appear in the Poisson processes $\{ \mathcal P_i (c_i) \}_{i \ge 1}$. Again, we consider the case when $\tau \in \mathcal P_k (c_k)$ for a unique $k$ only, and describe the evolution of sets at this moment.

1) Sets $V_{1} (\tau), \dots,V_{k-1} (\tau)$ do not change, in the set $V_{k} (\tau)$ one element is removed, and for each of the sets $V_{k+1} (\tau), V_{k+2} (\tau), \dots$ either this set does not change or exactly one element is added and exactly one element is removed in it.

2) In the set $V_{k} (\tau)$, it is its smallest element that is being removed.

3) Let us describe the change of $V_m (\tau)$ for $m=k+1, k+2, \dots$:

Let $m_{-}$ be the largest integer such that $m_{-} < m$ and $V_{m_{-}} (\tau)$ has changed. Then the sets $V_{m_{-} +1} (\tau), \dots, V_{m -1} (\tau)$ did not change. Let $i$ be the element that was removed from $V_{m_{-}} (\tau)$.

3a) If $i$ is contained in $V_m (\tau)$, then $V_m (\tau)$ does not change.

3b) Otherwise, either $i$ is added to $V_m (\tau)$ and the smallest element $>i$ of $V_m (\tau)$ is removed from it, or $V_m (\tau)$ does not change. The choice of which operation of these two happens is random; let $R$ and $L$ be these probabilities, respectively. The probabilities depend on the sets.

4) It remains to determine $R$ and $L$.

4a) If $V_m (\tau)$ does not contain $i-1$, then $R=1-t$ and $L=t$.

4b) If $V_m (\tau)$ contains $i-1$, then $$R=\frac{1-t}{1-t^{h^{(i)} (\tau-0;m) - h^{(i-1)} (\tau-0;m) +1}}, \qquad L = 1 - \frac{1-t}{1-t^{h^{(i)} (\tau-0;m) - h^{(i-1)} (\tau-0;m) +1}},$$
where $h^{(i)} (\tau-0;m) $ are computed before the changes related to the arrival of a Poisson signal at time $\tau$ happen (to indicate this, we write $\tau-0$ in the formula). Note that the rules of the dynamics imply that $h^{(i)} (\tau;m) - h^{(i-1)} (\tau;m)$ is always nonnegative.

See Figure \ref{fig:set-dyn} for an example.

\begin{figure}
\begin{align*}
\begin{tikzpicture}[>=stealth,scale=0.8]
\node at (0,0) {0,1,2};
\node at (0,1) {0,1,2};
\node at (0,2) {0,1,2};
\draw[thick,->] (0.7,1) -- (2.2,1); \node at (1.5,1.4) {1};
\node at (3,0) {0,1,2};
\node at (3,1) {0,1,2};
\node at (3,2) {1,2};
\draw[thick,->] (3.7,1) -- (5.2,1); \node at (4.5,1.4) {1-$t$};
\node at (6,0) {1,2};
\node at (6,1) {0,1,2};
\node at (6,2) {0,2};
\draw[thick,->] (6.7,1) -- (8.2,1); \node at (7.5,1.4) {1};
\node at (9,0) {1,2};
\node at (9,1) {1,2};
\node at (9,2) {0,2};
\draw[thick,->] (9.7,1) -- (11.2,1); \node at (10.5,1.4) {1 - $\frac{1-t}{1-t^2}$};
\node at (12,0) {1,2};
\node at (12,1) {2};
\node at (12,2) {0,2};
\end{tikzpicture}
\end{align*}
\caption{An example of dynamics on sets in which independent signals arrive to levels $3,1,2,2$ and the probabilities of drawn changes are indicated. Only elements $0,1,2$ are written (all other elements do not change; in fact, element $2$ also does not change in this example). }
\label{fig:set-dyn}
\end{figure}
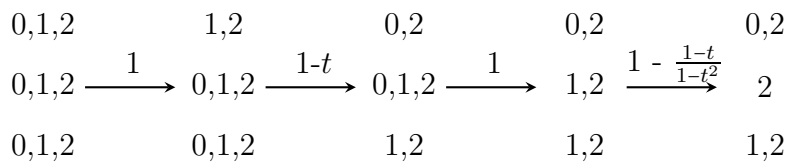

\begin{figure}
\begin{tikzpicture}[>=stealth,scale=0.8]

\draw[ultra thick,->] (0,0) -- (1.7,0);
\draw[ultra thick,->, draw=red,fill=red] (0,0.2) -- (6,0.2);
\draw[ultra thick,->] (0,1) -- (2.5,1);
\draw[ultra thick,->, draw=red,fill=red] (0,1.2) -- (4,1.2);
\draw[ultra thick, draw=red] (4,1.2) -- (4,3);
\draw[ultra thick,->] (0,2) -- (1,2);
\draw[ultra thick,->, draw=red,fill=red] (0,2.2) -- (1.7,2.2);
\draw[ultra thick, draw=red] (1.7,2.2) -- (1.7,3);
\draw[ultra thick] (1,2) -- (1,3);
\draw[ultra thick,->] (1.7,0) -- (1.7,2);
\draw[ultra thick,->] (1.7,2) -- (6,2);
\draw[ultra thick] (2.5,1) -- (2.5,3);
\end{tikzpicture}

\caption{An arrow model for an example from Figure \ref{fig:set-dyn}. Black arrows indicate the presence of element $0$, red elements indicate the presence of element $1$. }
\label{fig:colored-arrows}
\end{figure}
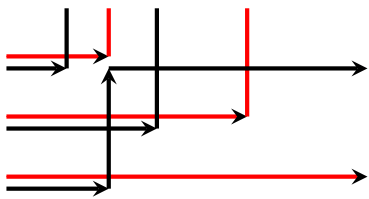

For each $m, \tau$ consider the partition $\tilde \la^{(m)} (\tau):= (m - h^{(0)} (\tau,m), m - h^{(1)} (\tau,m), \dots)$.

\begin{proposition}
\label{prop:dynamics-2}
Random time-dependent sequences of partitions $\{ \tilde \lambda^{(m)} (\tau) \}_{m \in \mathbb Z_{\ge 0}, \tau \in \mathbb R_+}$ and $\{ \lambda^{(m)} (\tau) \}_{m \in \mathbb Z_{\ge 0}, \tau \in \mathbb R_+}$ are identically distributed (the latter one was constructed in Section \ref{sec:HL-dyn-part}).

\end{proposition}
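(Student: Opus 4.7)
The plan is to couple the two dynamics using the same family of Poisson processes $\{\mathcal{P}_i(c_i)\}_{i \ge 1}$ and verify that the deterministic map $\{V_i(\tau)\}_{i \ge 1} \mapsto \{\tilde\lambda^{(m)}(\tau)\}_{m \ge 1}$, defined by $\tilde\lambda^{(m)}_{r+1} := m - h^{(r)}(\tau;m)$, intertwines the set dynamics of Section \ref{sec:dyn-sets} with the partition dynamics of Section \ref{sec:HL-dyn-part}. Since both are driven by the same Poisson events, it suffices to check (a) that the initial conditions match and (b) that the conditional update rules coincide at each jump time.

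For (a): $V_i(0) = \mathbb{Z}_{\ge 0}$ gives $h^{(r)}(0;m) = m$ for all $r$, hence $\tilde\lambda^{(m)}(0) = \varnothing = \lambda^{(m)}(0)$. Before attacking (b), I would establish by induction on Poisson events two structural invariants that make the map sensible: first, $r \mapsto h^{(r)}(\tau;m)$ is non-decreasing (so each $\tilde\lambda^{(m)}(\tau)$ is a genuine partition, with at most $m$ rows and $h^{(r)} = m$ for all $r$ large enough); second, the family $\{\tilde\lambda^{(m)}(\tau)\}_{m \ge 1}$ satisfies the interlacing $\tilde\lambda^{(m+1)}_{j+1} \le \tilde\lambda^{(m)}_j \le \tilde\lambda^{(m+1)}_j$. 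Both follow by a direct case analysis of the set update rules, since a signal at level $k$ removes the smallest element of $V_k$, and in every subsequent $V_m$ either nothing happens or some $i$ is added while a strictly larger element is removed --- neither operation can violate monotonicity or interlacing.

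The substantive step is (b). At $\tau \in \mathcal{P}_k(c_k)$, removal of the smallest element $i_0$ of $V_k$ bumps $\tilde\lambda^{(k)}_{i_0+1}$ by one; since $0,\ldots,i_0-1 \notin V_k$, the invariants force the first $i_0$ rows of $\tilde\lambda^{(k)}$ to be blocked in the sense of Section \ref{sec:HL-dyn-part}, so the row incremented is indeed the smallest free one, matching rule 2 there. For propagation to level $m>k$, the operation ``add $i$ to $V_m$, remove the smallest element $>i$'' shifts one box between two rows of $\tilde\lambda^{(m)}$ with net effect of increasing exactly $\tilde\lambda^{(m)}_{i+1}$ by one, which is precisely the push/pull alternative in the partition dynamics. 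The branching probabilities agree via the identity
\[
\tilde\lambda^{(m)}_i(\tau-0) - \tilde\lambda^{(m)}_{i+1}(\tau-0) = h^{(i)}(\tau-0;m) - h^{(i-1)}(\tau-0;m),
\]
which converts the exponent $D+1$ in $\frac{1-t}{1-t^{D+1}}$ into the exponent appearing in rule 4b of Section \ref{sec:dyn-sets}; the two subcases 4a/4b in each section correspond to whether $i-1 \in V_m$, equivalently whether the number of rows of $\tilde\lambda^{(m+1)}$ equal to $\tilde\lambda^{(m)}_i$ before the change is $D$ or $D+1$. The main obstacle is exactly this bookkeeping --- correctly pairing the element $i$ being propagated up the tower $V_k \subset V_{k+1} \subset \cdots$ with the row index of $\tilde\lambda^{(m)}$ that changes and with the value of $D$ --- but once the dictionary is set up, the equality of the transition laws, and hence the identity of the two Markov processes, is immediate.
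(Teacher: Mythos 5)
Your overall strategy --- run both dynamics off the same Poisson clocks and check that a deterministic dictionary intertwines the update rules --- is exactly the strategy of the paper's own (very terse) proof, but the dictionary you set up sits on the wrong side of a row/column transposition, and this is fatal to the specific steps you propose. The quickest way to see it: after $n$ rings of $\mathcal{P}_1(c_1)$ the partition dynamics gives $\lambda^{(1)}=(n)$, a single row, whereas the set dynamics has removed $0,1,\dots,n-1$ from $V_1$, so $\tilde\lambda^{(1)}_{r+1}=1-h^{(r)}(\tau;1)$ equals $1$ for $r<n$ and $0$ otherwise, i.e.\ $\tilde\lambda^{(1)}=(1^n)=(n)'$. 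Hence no identity-map intertwining can exist. What is true under the coupling (and what the paper's dictionary, phrased via ``numbers of rows of length $\le i$'' of consecutive partitions, encodes, and what the application in Theorem \ref{th:1-2cont6vert} --- first \emph{column} of $\lambda^{(y)}$ against $y-h^{(0)}$ --- actually uses) is the transposed statement: $r\in V_m$ precisely when $\lambda^{(m)}$ and $\lambda^{(m-1)}$ have equally many rows of length $\ge r+1$, so that $\tilde\lambda^{(m)}(\tau)=\bigl(\lambda^{(m)}(\tau)\bigr)'$ for all $m,\tau$, and removing $\min V_k=i_0$ corresponds to adding a box to a row of $\lambda^{(k)}$ of \emph{length} $i_0$, i.e.\ to incrementing the $(i_0{+}1)$-st column.

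Concretely, several of your intermediate claims fail. First, row $1$ of any partition in the array is free by definition, so ``the first $i_0$ rows of $\tilde\lambda^{(k)}$ are blocked'' cannot hold, and in any case matching rule 2 requires control of the rows \emph{below} the incremented one; the correct statement is that $\min V_k$ equals the length of the smallest free row of $\lambda^{(k)}$. Second, the interlacing invariant $\tilde\lambda^{(m+1)}_{j+1}\le\tilde\lambda^{(m)}_j\le\tilde\lambda^{(m+1)}_j$ is false: consecutive $\tilde\lambda$'s differ by \emph{vertical} strips, $0\le\tilde\lambda^{(m+1)}_{r}-\tilde\lambda^{(m)}_{r}\le 1$; e.g.\ the conjugates of $(9,2,2)\subset(10,6,2,1)$ from Figure \ref{fig:interlacing_intro} do not interlace row-wise. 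Third, the move ``add $i$ to $V_m$, remove the smallest $i'>i$'' changes $\tilde\lambda^{(m)}$ by $-1$ at part $i+1$ and $+1$ at part $i'+1$, so combined with the inherited $+1$ at part $i+1$ the net effect is $+1$ at part $i'+1$, not at part $i+1$; the increment of part $i+1$ is what the \emph{no-change} branch produces. If one nevertheless pushes your identity dictionary through, the branching weights come out interchanged: starting from empty, one ring of $\mathcal{P}_2$ followed by one ring of $\mathcal{P}_1$ gives $\tilde\lambda^{(2)}=(1,1)$ with probability $1-t$ in the set dynamics, while rules 3b/4a of Section \ref{sec:HL-dyn-part} give $\lambda^{(2)}=(1,1)$ with probability $t$ --- the two laws agree only after conjugation. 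So the proof must be rebuilt around the transposed dictionary: with it, $\min V_k$ is the length of the smallest free row, $h^{(i)}(\tau-0;m)-h^{(i-1)}(\tau-0;m)$ is the multiplicity of parts equal to $i$ in $\lambda^{(m)}$ (the $D$-type quantity), the alternatives 3a/3b and cases 4a/4b of the two sections pair up, and the probabilities $R,L$ match --- which is exactly the content of the paper's one-line proof, and also shows that the proposition should be read with $\lambda^{(m)}$ replaced by its conjugate.
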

\begin{proof}
Let us say that the element $i$ belongs to the set $V_m (\tau)$ if and only if the number of rows $\le i$ in $\la^{(m+1)} (\tau)$ and the number of rows $\le i$ in $\la^{(m)} (\tau)$ differ exactly by 1 (note that interlacing conditions for partitions imply that these numbers are either equal or differ by one). Then the dynamics on sets coincides with the dynamics described in Section \ref{sec:HL-dyn-part}; all rules correspond to each other.
\end{proof}

Let us consider the restriction of this set dynamics obtained by looking only at the elements $\{0,1, \dots, k\}$ in the $V_i (\tau)$'s. From the evolution rules, it is clear that this restricted dynamics is Markovian (indeed, at any moment the evolution of the element $i$ depends only on itself and the positions of elements $i-1$).
In particular, the evolution of the elements $0$ is Markovian; note that in this dynamics the case 4b) cannot happen.

It is possible to represent the dynamics on the sets by drawing vertex model-type arrows in the following way. For $\tau_1 < \tau_2$ we have a horizontal arrow of type $i$ between $(\tau_1, n)$ and $(\tau_2,n)$ if $i \in V_n (\tau)$ for any $\tau \in (\tau_1, \tau_2)$. For $n_1 < n_2$ we draw a vertical arrow of type $i$ between $(\tau,n_1)$ and $(\tau,n_2)$ if at the moment $\tau$ element $i$ was removed from $V_{n_1} (\tau)$ and was added to $V_{n_2} (\tau)$; we also draw an infinite vertical line of type $i$ starting from $(\tau,n_1)$ if an element $i$ was removed from $V_{n_1} (\tau)$ and was not added elsewhere. See Figure \ref{fig:colored-arrows} for an example.

The dynamics of lines of type $0$ coincides with a version of the stochastic six vertex model. We discuss this in Section \ref{sec:half-cont}.
Also, one can consider an interacting particle system given by the Markov dynamics on the $0$ elements\footnote{In a similar way, the restriction of the dynamics to the elements $\{0,\dots, k\}$ leads to a certain multi-species particle dynamics.}.
We call it the \textit{t-PushTASEP}; let us describe it separately.

Consider $\mathbb Z_{\ge 1}$, and let each site $l$ have an exponential clock with rate $c_l$. When the clock at site $k$ rings, then there are two variants:

a) The site $k$ is empty --- then nothing happens.

b) The site $k$ contains a particle --- then this particle becomes active.

The evolution of an active particle is further described by the following rules:

c) The active particle always jumps to the right by $1$.

c1) If this site is occupied, then the active particle stops, but it pushes the particle in this site, which becomes active instead of the stopped one.

c2) If the site is empty, then the active particle can either stop with probability $1-t$ (and be no longer active), or continue to be active with probability $t$.


If we start this dynamics with the configuration in which each positive integer $k$ is occupied by a particle, then Proposition \ref{prop:dynamics-2} allows to relate the evolution to the first column of partitions in a Hall--Littlewood process. One can argue that the t-PushTASEP is then a novel example of an \textit{integrable} interacting particle system.

\subsection{Half-continuous stochastic six vertex model}
\label{sec:half-cont}

In parameters of the Hall--Littlewood process $\{a_x\}_{x \in \mathbb Z_{\ge 1}}$, $\{b_y\}_{y \in \mathbb Z_{\ge 1}}$ (see Section \ref{sec:HL-process-usual}), the weights of stochastic six vertex model have the form \eqref{eq:six-vertex-weights}.
Let us set all $a_i$ to be equal to $\eps / (1-t)$, and let the index $x$ range over the grid $\eps \mathbb Z_{\ge 1}$ rather than $\mathbb Z_{\ge 1}$. Consider now the $\eps \to 0$ limit. The grid in the $x$-direction becomes continuous; however, up-right paths still make sense. In such a
limit we obtain $\mathrm{Prob} (\uu) = t$, $\mathrm{Prob} (\ur) = 1-t$, while the weight $\mathrm{Prob} (\ru)$ turns into the infinitesimal jumping probability of the Poisson process of intensity $b_y$. We refer to the limiting model as to the \textit{half-continuous} stochastic six vertex model; let $\{h(\tau, y)\}_{\tau \in \mathbb R_{\ge 0}, y \in \mathbb Z_{\ge 1}}$ be its height function.

Denote by $l (\tau,y)$ the length of the first column of $\lambda^{(y)} (\tau)$, where $\lambda^{(y)} (\tau)$ are the random partitions introduced in Section \ref{sec:HL-dyn-part}.

\begin{theorem}
\label{th:1-2cont6vert}
In the notations above and with identification $b_i = c_i$ for all $i$, the random fields $\{ h(\tau, y) \}_{\tau \in \mathbb R_{\ge 0}, y \in \mathbb Z_{\ge 1}}$ of the half-continuous stochastic six vertex model, and $\{ (y - l(\tau,y)) \}_{\tau \in \mathbb R_{\ge 0}, y \in \mathbb Z_{\ge 1}}$ are identically distributed.

\end{theorem}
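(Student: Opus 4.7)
The plan is to exhibit both sides as identically-distributed continuous-time Markov chains in $\tau$. On the Hall--Littlewood side, Proposition~\ref{prop:dynamics-2} reformulates the RSK dynamics as the set dynamics of Section~\ref{sec:dyn-sets}. As observed at the end of that section, the restriction of the set dynamics to the single element~$0$ is Markovian: case 4b never triggers for $i=0$, and any element emitted by the $R$-branch of a zero-cascade is strictly positive and never feeds back into the element-$0$ indicators. The vector $(l(\tau,y))_{y \ge 1}$---and hence $(y-l(\tau,y))_{y \ge 1}$---is then a measurable functional of this restricted chain via the partition-to-set bijection of Section~\ref{sec:dyn-sets}. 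The closed dynamics of the zero-indicators $\chi_y(\tau) := \mathbf 1[0 \notin V_y(\tau)]$ is: at each level $y$, an independent rate-$c_y$ Poisson clock; upon firing, flip $\chi_y$ from ``$0$ present'' to ``$0$ absent'' and then propagate a cascade upward, which at each subsequently-absent level either terminates with probability $1-t$ (reinstating $0$ there) or continues with probability $t$, and at each currently-present level passes through unchanged (rule 3a).

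On the six-vertex side, the half-continuous model of Section~\ref{sec:half-cont} is itself a continuous-time Markov chain in $\tau$, whose state is the occupation pattern of horizontal paths at each height. Its infinitesimal generator reads off directly from the limiting vertex weights: at each occupied height $y$ a rate-$b_y$ Poisson clock initiates an upward vertical arrow, which propagates through subsequently-occupied heights via the $\all$ vertex (weight~$1$), and at each unoccupied height either terminates (vertex $\ur$, probability $1-t$, filling the height in) or continues (vertex $\uu$, probability $t$). Identifying a ``$0$-absent'' level on the HL side with a ``horizontally-occupied'' height on the 6V side and imposing $b_y=c_y$, this is literally the same generator as the element-$0$ dynamics above. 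The functional identity $y-l(\tau,y)=h(\tau,y)$ follows from the combinatorial identity encoded by the arrow-model picture of Figure~\ref{fig:colored-arrows}. Since the two Markov chains share identical initial conditions at $\tau=0$ (all $V_y(0)=\mathbb Z_{\ge 0}$, equivalently no previous transitions in the 6V system), uniqueness of pure-jump Markov chains with locally bounded jump rates yields equality in law on path space, and hence equality of the two random fields in distribution.

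The principal subtlety is the verification of closure of the element-$0$ restriction (non-zero propagations generated by $R$-branches of earlier zero-cascades do not feed back into the $\chi$-process) and the case-by-case matching of the HL cascade rule (case 4a with $R=1-t$, $L=t$, together with rule 3a at occupied levels) with the half-continuous six-vertex weights $(\mathrm{Prob}(\ur),\mathrm{Prob}(\uu),\mathrm{Prob}(\all))=(1-t,t,1)$. Each of these is a direct comparison once the bijection between configurations is fixed. As an independent cross-check on the fixed-$\tau$ marginal, Proposition~\ref{prop:dynamics-1} combined with the $\epsilon\to 0$ limit of Theorem~\ref{thm:min} (with $a_i=\epsilon/(1-t)$ on the grid $\epsilon\mathbb Z_{\ge 1}$) identifies the distribution of $\lambda^{(y)}(\tau)$ as the Hall--Littlewood measure with Plancherel specialization of parameter $\tau$, which Theorem~\ref{th:match-param} in turn matches with the half-continuous six-vertex height function along a vertical section.
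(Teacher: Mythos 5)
Your argument is essentially the paper's own proof, spelled out in detail: the paper likewise observes that the up-right paths of the half-continuous model coincide with the type-$0$ lines of the set dynamics of Section \ref{sec:dyn-sets} and then applies Proposition \ref{prop:dynamics-2} to first columns, which is exactly your generator matching (Markovianity and closure of the element-$0$ restriction, case 4a giving $R=1-t$, $L=t$, rule 3a versus the limiting weights $1-t$, $t$, $1$) together with the identity $y-l(\tau,y)=h(\tau,y)$. One slip to fix: the correspondence must read ``$0$ \emph{present} in $V_y$'' $\leftrightarrow$ ``height $y$ horizontally occupied'' (not ``$0$-absent''), as your own generator descriptions, the shared initial condition ($0\in V_y(0)$ for all $y$ versus all heights occupied at $\tau=0$), and the identity $h(\tau,y)=\#\{j\le y:\ 0\in V_j(\tau)\}=y-l(\tau,y)$ all require.
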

\begin{proof}
One readily sees that the up-right paths of the half-continuous stochastic six vertex model exactly match the vertex-type lines of dynamics of the element $0$ (see Section \ref{sec:dyn-sets}). Thus, the statement follows from Proposition \ref{prop:dynamics-2} applied to the first columns of the corresponding partitions.
\end{proof}

\begin{corollary}
\label{cor:last}
The distribution of the height function along the down-right paths coincide with the Hall--Littlewood processes of Proposition \ref{prop:dynamics-staircase}.
\end{corollary}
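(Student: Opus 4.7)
The plan is to obtain the corollary as a direct combination of Theorem \ref{th:1-2cont6vert} (which identifies the height function of the half-continuous six vertex model with a first-column observable of the partition dynamics of Section \ref{sec:HL-dyn-part}) with Proposition \ref{prop:dynamics-staircase} (which identifies the joint law of the dynamical partitions sampled along a staircase with a non-ascending Hall--Littlewood process). Essentially no new computation is needed; the work is in aligning the two parametrizations.

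First I would make precise what a down-right path means in the half-continuous setting. Such a path in the quadrant $\R_{\ge 0} \times \Z_{\ge 1}$ is encoded, up to reparametrization, by a binary string $S \in \mathcal{S}_{M,N}^{+}$ together with an increasing sequence $0 < \hat\tau_1 \le \dots \le \hat\tau_N$ of horizontal coordinates: a ``$+$'' in $S$ prescribes a rightward continuous step along the $\tau$-axis to the next $\hat\tau_j$, and a ``$-$'' prescribes a downward unit step in the discrete $y$-direction. With this encoding, the sequence of vertices visited by the path is exactly $\{(\tau_i(S), n_i(S))\}_{i=1}^{M+N-1}$ from the discussion preceding Proposition \ref{prop:dynamics-staircase}.

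Next, I would invoke Theorem \ref{th:1-2cont6vert}, which is a statement about the equality in law of the entire two-dimensional random fields $\{h(\tau,y)\}$ and $\{y - l(\tau,y)\}$. Consequently, for any collection of points $\{(\tau_i(S), n_i(S))\}$ we have equality in joint distribution
\[
\{h(\tau_i(S), n_i(S))\}_{i=1}^{M+N-1} \stackrel{d}{=} \{n_i(S) - l(\tau_i(S), n_i(S))\}_{i=1}^{M+N-1},
\]
with $l(\tau,y)$ the length of the first column of $\la^{(y)}(\tau)$. Applying Proposition \ref{prop:dynamics-staircase} (with $b_i = c_i$) to the right-hand side, the joint law of $(\la^{(n_i(S))}(\tau_i(S)))_{i=1}^{M+N-1}$ is that of the corresponding marginals of the Hall--Littlewood process determined by $S$, $\{c_i\}$, $\{\hat\tau_i\}$ and the prescribed Plancherel specializations. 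Projecting to first columns identifies the law of the height function along the path with that of $\{n_i(S) - \la'_1(n_i(S),\cdot)\}$ taken in this Hall--Littlewood process, which is the desired statement.

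The only real obstacle is bookkeeping: one must verify that the conventions for $+$ versus $-$ in Proposition \ref{prop:dynamics-staircase} (where $+$ fixes $n$ and advances $\tau$, while $-$ decrements $n$ and fixes $\tau$) are consistent, after the translation via Theorem \ref{th:1-2cont6vert}, with the natural orientation of down-right paths in the half-continuous quadrant. Once this matching of indices and of parameters $b_i = c_i$ is spelled out, the corollary follows from stringing the two cited results together.
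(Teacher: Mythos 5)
Your proposal is correct and follows essentially the same route as the paper, which justifies the corollary precisely by combining Theorem \ref{th:1-2cont6vert} with Proposition \ref{prop:dynamics-staircase} (the paper's remark also notes a second, independent route via Theorem \ref{th:equiv-distrib} in the continuous-axis limit, which you do not need). Your explicit bookkeeping of the down-right path via the string $S$ and the times $\hat\tau_1\le\dots\le\hat\tau_N$ is exactly the intended matching with the points $(\tau_i(S),n_i(S))$.
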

\begin{remark}
Corollary \ref{cor:last} follows either from Theorem \ref{th:1-2cont6vert} or from Theorem \ref{th:equiv-distrib} (in the limit of parameters described above); this yields two independent proofs.
\end{remark}

\end{document}